\newcommand{\om}{\omega}
\newcommand{\si}{\sigma}
\newcommand{\ba}{\mathcal{G}}
\newcommand{\fg}{\mathfrak g}
\newcommand{\fp}{\mathfrak p}
\newcommand{\fk}{\mathfrak k}
\newcommand{\fh}{\mathfrak h}
\newcommand{\fn}{\mathfrak n}
\newcommand{\fm}{\mathfrak m}
\newcommand{\id}{{\rm id}}
\theoremstyle{plain}
\theoremstyle{plain}
\newtheorem{thm*}{Theorem}[section]
\newtheorem{prop*}[thm*]{Proposition}
\newtheorem{lem*}[thm*]{Lemma}
\newtheorem{cor*}[thm*]{Corollary}
\theoremstyle{definition}
\newtheorem{def*}{Definition}
\theoremstyle{remark}
\newtheorem{rem*}{Remark}
\begin{document}
\title[Equivalence problem for 2--nondegenerate CR geometries]{On equivalence problem for 2--nondegenerate CR geometries with simple models}
\author{Jan Gregorovi\v c}
\address{Faculty of Mathematics, University of Vienna, Oskar Morgenstern Platz 1, 1090 Wien, Austria}
 \email{jan.gregorovic@seznam.cz}
\subjclass[2010]{32V40, 32V35, 32V30, 32V05, 53C30, 53C10}
 \thanks{The author was supported by the project P29468 of the Austrian Science Fund (FWF)}
\maketitle
\begin{abstract}
In this article, we solve the equivalence problem for 2--nondegenerate CR geometries that have (at every point) a homogeneous space $G/H$ as a maximally symmetric model for $G$ simple real Lie group of CR automorphisms. This completes the classification of real submanifolds in complex space that are maximally symmetric models with a real simple CR automorphism group. In particular, we construct (local) embeddings of these models into complex space.
\end{abstract}

\tableofcontents
\section{Introduction}

A way toward the solution of the problem of the biholomorphic equivalence for real submanifolds in the complex space is to compare the induced CR geometries with appropriate model CR geometries. In the case of the Levi--nondegenerate real hypersurfaces in $\mathbb{C}^N$, the maximally symmetric models are quadrics that can be identified (depending on the signature) with (the CR geometries on) the homogeneous spaces $G/H=SU(p+1,N-p)/P$, where $SU(p+1,N-p)$ is the (real simple) CR automorphism group of the quadric and the stabilizer $P$ is a particular parabolic subgroup of $SU(p+1,N-p)$. This allowed Cartan \cite{cartan} for $N=2$ and Tanaka \cite{Ta62,Ta70,Ta76} and Chern and Moser \cite{chern} for $N\geq 2$ to solve the equivalence problem for Levi--nondegenerate CR hypersurfaces. Of course, there are also Levi--nondegenerate CR hypersurfaces that have a non--maximal real simple CR automorphism group, see \cite{Gr12}.

In the case of the Levi--nondegenerate CR geometries of codimension higher than $1$, the equivalence problem is solved under the assumption that we can compare the CR geometry with the same (maximally symmetric) model at every point. Under this assumption, the Tanaka's prolongation procedure \cite{Ta70} solves the equivalence problem for such real submanifolds in the complex space. Nevertheless, this solution is sufficient to classify the maximally symmetric models of Levi--nondegenerate CR geometries with semisimple CR automorphism group. We review this classification in detail in the following Section \ref{uv1}. Let us emphasize at this point that all of the models in the classification fit in the class of parabolic geometries c.f. \cite{parabook}, which will also play an important role in the article. Particular classes of such CR geometries were studied  in detail by Schmalz and Slovak \cite{SS00,SS12}.

In the case of general Levi--degenerate CR geometries, the equivalence problem is still open. Recently, several solutions different of the equivalence problem were found for the everywhere 2--nondegenerate CR hypersurfaces in $\mathbb{C}^3$ by Ebenfelt \cite{Eb06}, Isaev and Zaitsev \cite{IZ13}, Pocchiola \cite{Poc13}, Medori and Spiro \cite{MS14,MS15} and Kossovskyi and Kolar \cite{KK19}. The 2--nondegeneracy is the strongest nondegeneracy condition for Levi--degenerate CR geometries c.f. \cite{ber0,Fr74,Fr77} and we recall the precise definition later. Further, Porter \cite{Por15} also found a solution of the equivalence problem for many classes of everywhere 2--nondegenerate CR hypersurfaces in $\mathbb{C}^4$. Finally, Porter and Zelenko \cite{PZ17} solve the equivalence problem for many classes of everywhere 2--nondegenerate CR hypersurfaces with one dimensional Levi kernel. As we show later, these results provide only few examples of maximally symmetric models of 2--nondegenerate CR submanifolds that have real simple CR automorphism group, but not a complete classification.

In this article, we solve the equivalence problem for 2–nondegenerate CR geometries  in {\em arbitrary} dimension satisfying the following natural requirement: the CR geometry is modeled, at each point, by a homogeneous space $G/H$ with the model being maximally symmetric and having a real simple CR automorphism group. We do so by extending to the 2-nondegenerate setting the {\em Tanaka’s prolongation procedure} \cite{Ta70}. An overview of our procedure and detailed results are given in Section \ref{uv2}.

Let us remark that it is also possible to answer the question of what are the maximally symmetric model with semisimple CR automorphism group algebraically. This way would generalize the work of Santi \cite{Sa15} on models of 2--nondegenerate CR hypersurface, but we know from results mentioned above that there are obstructions for this approach to provide the solution of the equivalence problem. We also obtain this algebraic result, see Remark \ref{remsem}, on the way to solve the equivalence problem. 

Finally, we show (Theorem \ref{neces3}) that there are no maximally symmetric models of CR geometries that are nondegenerate of order $\ell\geq 3$  that would have semisimple CR automorphism group and thus we  completed the (algebraic) classification of maximally symmetric models with semisimple CR automorphism group.

\subsection{Semisimple maximally symmetric models of  Levi--nondegenerate CR geometries}\label{uv1}

A (bracket generating) filtration on a smooth manifold $M$ is a family of successive subbundles $$0=T^0M\subset T^{-1}M\subset \dots \subset T^iM\subset T^{i-1}M\subset \dots \subset T^{\mu}M=TM$$ such that $T^{-1}M$ generates $TM$ (via the Lie bracket). The pair $(M,T^iM)$ is called a filtered manifold if a Lie bracket of sections of $T^iM$ and $T^jM$ is a section of $T^{i+j}M$. The bracket of vector fields on filtered manifold defines an algebraic Lie bracket $\mathcal{L}_x$ on the associated graded tangent bundle 
 $$gr(T_xM):=\bigoplus_i gr_i(T_xM),\ gr_i(T_xM):= T_x^{-i}M/T_x^{-i+1}M$$
at each $x\in M$. In particular, $(gr(T_xM),\mathcal{L}_x)$ is a graded Lie algebra.

The filtered manifold $(M,T^iM)$ is called regular with symbol $\fm$ if the graded Lie algebra $(gr(T_xM),\mathcal{L}_x)$ is for each $x\in M$ isomorphic to a graded Lie algebra $\fm=\oplus_{i<0} \fm_i$. If $\fm_{-1}$ does not contain an non--trivial ideal $\fk$ of $\fm$, then the symbol $\fm$ is called non--degenerate.

A geometric structure on a regular filtered manifold with symbol $\fm$ is a reduction of the frame bundle of graded isomorphisms of $(gr(T_xM),\mathcal{L}_x)$ with $\fm$ to a subgroup $G_0$ of the group $Aut_0(\fm)$ of grading preserving automorphisms of $\fm$.

\begin{def*}
A CR geometry with Levi--Tanaka algebra $(\fm,I)$ on a regular filtered manifold $(M,T^iM)$ with symbol $\fm$ is
\begin{enumerate}
\item a complex structure $I$ on $\fm_{-1}$ such that $[I(X),I(Y)]=[X,Y]$ holds for all $X,Y\in \fm_{-1}$, and
\item a reduction of the group $Aut_0(\fm)$ of grading preserving automorphisms of $\fm$ to a subgroup $G_{0,I}$ of $Aut_0(\fm)$ consisting of the elements preserving $I$.
 \end{enumerate}

The distribution $\mathcal{K}\subset T^{-1}M$ corresponding to the maximal non--trivial ideal $\fk$ of $\fm$ in $\fm_{-1}$ is called a Levi kernel. A CR geometry is called Levi--nondegenerate if the symbol $\fm$ is non--degenerate, i.e., $\mathcal{K}=0$.
\end{def*}

The Levi--nondegenerate hypersurfaces $M\subset \mathbb{C}^N$ provide typical examples of CR geometries. The complex tangent space $T^{-1}M=TM\cap i(TM)\subset T\mathbb{C}^N$ together with the Levi bracket form a regular filtered manifold with a symbol that is the Heisenberg algebra. The complex structure on $T^{-1}M$ provides a reduction of the group $CSp(2N-2,\mathbb{R})$ of the grading preserving automorphisms of the Heisenberg algebra to $CSU(p,N-p-1)$, where $(p,N-p-1)$ is the signature of the Levi form.

In general, the nilpotent Lie group $\exp(\fm)$ together with the left invariant filtration and the geometric structure induced by the grading of $\fm$ and reduction to $G_0$ is a (local) model of a $G_0$--structure on a regular filtered manifold with nondegenerate symbol $\fm$ in the Tanaka's prolongation theory \cite{Ta70}. Moreover, Tanaka proved that the Lie algebra of (local) infinitesimal automorphisms, i.e., vector fields preserving the filtration and the geometric structure, is finite dimensional and can be bound by dimension of the Lie algebra of infinitesimal automorphisms of the (local) model. Starting with the nondegenerate symbol $\fm$ and the Lie algebra $\fg_0$ of $G_0$, the Lie algebra of infinitesimal automorphisms of the (local) model is a graded Lie algebra $\fg=\fg_{\mu}\oplus \dots \oplus \fg_{-1}\oplus \fg_{0}\oplus \fg_{1}\oplus \dots$, where $\fg_i=\fm_i$ for $i<0$ and $\fg_i$ for $i>0$ can be computed as
$$\fg_i=\{f\in \oplus_{j<0}\fg_j^*\otimes \fg_{j+i}:\ f([X,Y])=[f(X),Y]+[X,f(Y)] \rm{\ for\ all\ } X,Y\in \fm\}.$$
The graded Lie algebra $\fg$ is usually called the Tanaka prolongation of $\fm\oplus \fg_0$.

If the Tanaka prolongation $\fg$ of $\fm\oplus \fg_0$ is a semisimple Lie algebra, then the non--negative part $\fp=\bigoplus_{i\geq0}\fg_i$ of $\fg$ is a parabolic subalgebra of $\fg$ and there is a global model $G/P$ containing the local model $\exp(\fm)$ as an open subset, i.e., $G/P$ is the maximally symmetric model. The corresponding $G_0$--structures on regular filtered manifolds with the symbol $\fm$ are usually called parabolic geometries, cf. \cite{parabook}. The case of Levi--nondegenerate hypersurfaces is among them because the Lie algebra $\frak{su}(p+1,N-p)$ is the Tanaka prolongation of the symbol Heisenberg algebra plus $\frak{csu}(p,N-p-1)$.

Apart few exceptions that are not related to CR geometries, for each parabolic subgroup $P$ of $G$ there is a grading $\fg_i$ of the Lie algebra $\fg$ of $G$ such that $\fp=\bigoplus_{i\geq0}\fg_i$ is the Lie algebra of $P$ and $\fg$ is the Tanaka prolongation of $\fg_-\oplus \fg_0$, where $\fg_-$ is the negative part of the grading. Thus, it suffices to find parabolic geometries that admit a complex structure $I$ on $\fg_{-1}$ preserved by $G_0$ such that $[I(X),I(Y)]=[X,Y]$ holds for all $X,Y\in \fg_{-1}$. This was done in \cite{MS} or \cite{AMT06}. The following theorem summarizes these classification results in a way that allows simple comparison with our main result Theorem \ref{main}.

\begin{thm*}\label{nondegclas}
Let $G$ be a semisimple Lie group and $H$ a Lie subgroup of $G$. A homogeneous space $G/H$ is a maximally symmetric model of a Levi--nondegenerate CR geometry if and only if $H=P$ is a parabolic subgroup of $G$ and there is a bigrading $\fg_{a,b}$ of the complexification of $\fg$ such that
\begin{enumerate}
\item the representation of $G_0$ on $\fg_{-1}$ is complex
\item $\fg_a\otimes \mathbb{C}=\oplus_b \fg_{a,b}$ holds for the grading $\fg_i$ of $\fg$ corresponding $P$,
\item $\fg_{0}\otimes \mathbb{C}=\fg_{0,0}$,
\item $\fg_{-1}\otimes \mathbb{C}=\fg_{-1,-1}\oplus \fg_{-1,0}$,
\item $\fg_{-2}\otimes \mathbb{C}=\fg_{-2,-1}.$
\end{enumerate}
\end{thm*}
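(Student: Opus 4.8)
The plan is to read the statement as a dictionary: on one side, the complex structure $I$ that upgrades a parabolic geometry to a CR geometry; on the other, a $\mathbb{Z}^2$--grading of $\fg\otimes\C$ refining the one coming from $P$. Tanaka's theory provides the bridge, and the case analysis of which parabolics actually carry such a refinement is imported from \cite{MS,AMT06}.

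For \emph{necessity} I would start from Tanaka's prolongation theorem \cite{Ta70} exactly as recalled above: if $G/H$ carries a maximally symmetric Levi--nondegenerate CR geometry with Levi--Tanaka algebra $(\fm,I)$, then $\fg$ is the prolongation of $\fm\oplus\fg_0$, the non--negative part $\fp=\bigoplus_{i\ge0}\fg_i$ is parabolic with $H=P$, and $G/P$ is the global model containing $\exp(\fm)$ as an open subset. Property $(1)$ is just the defining $G_0$--invariance of $I$. Complexifying, I would let $\fg_{-1,-1}$ and $\fg_{-1,0}$ be the $(\pm i)$--eigenspaces of $I$ on $\fg_{-1}\otimes\C$ (this is $(4)$); then $[I(X),I(Y)]=[X,Y]$ forces the bracket to vanish inside each eigenspace, so $\fg_{-2}\otimes\C=[\fg_{-1,-1},\fg_{-1,0}]=:\fg_{-2,-1}$ (this is $(5)$), and set $\fg_{0,0}:=\fg_0\otimes\C$. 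To propagate the bigrading to all of $\fg\otimes\C$: for $(3)$, $\fg_0$ commutes with $I$ hence acts with $b$--degree $0$ on $\fg_{-1}\otimes\C$, and since $\fg$ is semisimple $\fg_0$ acts faithfully on $\fg_{-1}$, so $\fg_0$ is itself concentrated in $b$--degree $0$; for $(2)$, each $f\in\fg_i$ with $i\ge1$ is a degree--$i$ derivation of $\fm$, and as the bracket of $\fm\otimes\C$ is bihomogeneous the bihomogeneous components of $f$ are again derivations, hence lie in $\fg_i\otimes\C$, and an induction on $i$ finishes $(2)$.

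For \emph{sufficiency} I would run this in reverse. Given $P$ parabolic and a bigrading with $(1)$--$(5)$, let $I$ be the complex structure provided by $(1)$; by $(4)$ its $(\pm i)$--eigenspaces on $\fg_{-1}\otimes\C$ are $\fg_{-1,-1},\fg_{-1,0}$, so conjugation swaps them. Bidegree additivity with $(5)$ gives $[I(X),I(Y)]=[X,Y]$: inside one eigenspace both sides lie in a vanishing bidegree $(-2,b)$ with $b\ne-1$, and across the two $[iX,-iY]=[X,Y]$. So $(\fg_-,I)$ is a Levi--Tanaka algebra; its symbol is non--degenerate because $\fg$ is semisimple — any ideal $\fk\subseteq\fg_{-1}$ of $\fg_-$ satisfies $[\fk,\fg_-]=0$, hence $\langle\fk,\fg_1\rangle=\langle\fk,[\fg_{-1},\fg_2]\rangle=\langle[\fk,\fg_{-1}],\fg_2\rangle=0$ (using $[\fg_{-1},\fg_2]=\fg_1$) and $\langle\fk,\fg_j\rangle=0$ for $j\ne1$, so $\fk=0$ by nondegeneracy of the Killing form. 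By $(3)$ the algebra $\fg_0$ acts by $I$--preserving grading--$0$ derivations, and since $\fg$ is the Tanaka prolongation of $\fg_-\oplus\fg_0$, the CR geometry with Levi--Tanaka algebra $(\fg_-,I)$ has $\fg$ as its algebra of infinitesimal automorphisms; as $\fp$ is parabolic, $G/P$ is the global model, hence the maximally symmetric one.

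The step I expect to be the real obstacle is not the bookkeeping above but the case analysis it reduces to: deciding, simple Lie algebra by simple Lie algebra and parabolic by parabolic, which $\fp$ admit a bigrading satisfying $(1)$--$(5)$ — equivalently, for which $\fp$ the grading--$0$ derivations of $\fg_-$ commuting with the resulting $I$ coincide with $\fg_0$. That is exactly the classification of \cite{MS,AMT06}, which I would cite rather than reproduce; the theorem above is its restatement in the form matching Theorem \ref{main}.
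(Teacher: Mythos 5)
Your route coincides with the paper's: Theorem \ref{nondegclas} is not proved in the text at all, but reduced, via the Tanaka prolongation discussion preceding it, to the question of which parabolics admit a $G_0$--invariant complex structure $I$ on $\fg_{-1}$ with $[I(X),I(Y)]=[X,Y]$, with the case analysis delegated to \cite{MS,AMT06}; your proposal does exactly this, and additionally writes out the dictionary between $I$ and the bigrading (eigenspace decomposition of $\fg_{-1}\otimes \C$, bidegree of the Levi bracket, propagation through the prolongation), which the paper itself only carries out later, in the 2--nondegenerate setting of Proposition \ref{neces}.

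One step of your sufficiency argument is not sound as written. You deduce nondegeneracy of the symbol from semisimplicity using the identity $[\fg_{-1},\fg_2]=\fg_1$. By the Killing--form pairings of $\fg_{\pm 1}$ and of $\fg_{\pm 2}$ (invariance gives $B(X,[Y,Z])=B([X,Y],Z)$ for $X,Y\in\fg_{-1}$, $Z\in\fg_2$), that identity is \emph{equivalent} to the statement that no nonzero $X\in\fg_{-1}$ satisfies $[X,\fg_{-1}]=0$, and the set of such $X$ is exactly the maximal ideal of $\fg_-$ contained in $\fg_{-1}$; so quoting the identity without proof makes the nondegeneracy step circular. Moreover it genuinely fails when some simple ideal of $\fg$ is $|1|$--graded by $P$ (there $\fg_2=0$ while $\fg_1\neq 0$), and conditions (1)--(5) together with semisimplicity do not exclude such factors: for instance $\fg=\frak{sl}(2,\mathbb{C})\oplus\frak{su}(2,1)$ with the $|1|$--grading on the first factor and the CR contact grading on the second admits a bigrading satisfying (1)--(5), yet the first factor contributes an ideal of $\fg_-$ inside $\fg_{-1}$, so the resulting geometry is Levi--degenerate. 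In the cited classification this configuration is excluded because nondegeneracy is built into the definition of a semisimple Levi--Tanaka algebra; if you want to keep a self--contained argument you must either exclude $|1|$--graded simple factors explicitly or prove that the center of $\fg_-$ meets $\fg_{-1}$ trivially (equivalently, prove $[\fg_{-1},\fg_2]=\fg_1$) for the gradings of depth at least two that actually occur -- it does not follow from semisimplicity alone. Your remaining implicit steps (conjugation swapping $\fg_{-1,-1}$ and $\fg_{-1,0}$ so that $I$ is defined over $\mathbb{R}$, and $\fg$ being the full Tanaka prolongation of $\fg_-\oplus\fg_0$ apart from the projective and contact--projective exceptions) are handled at the same level of rigour as in the paper's own discussion, and deferring the list of admissible $(\fg,\fp)$ to \cite{MS,AMT06} is precisely what the paper does.
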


\subsection{Summary of the results}\label{uv2}

In the setting of filtered manifolds $(M,T^iM)$, Freeman \cite{Fr74,Fr77} defines 2--nondegeneracy of CR geometries with Levi--Tanaka algebra $(\fm,I)$ using the following observation:

The complex antilinear part of the Levi--bracket of section of $\mathcal{K}$ and section of $T^{-1}M$ defines a linear map $\iota_x: \mathcal{K}_x\to \frak{gl}(T^{-1}_xM/\mathcal{K}_x)$. The CR geometry is 2--nondegenerate at $x$ if $\iota_x$ is injective.

Since we are only interested in 2--nondegenerate CR geometries on regular filtered manifold $(M,T^iM)$ with symbol $\fm$, we identify the maximal non--trivial ideal $\fk$ of $\fm$ in $\fm_{-1}$ with $\mathcal{K}_x$ and $\frak{gl}(T^{-1}_xM/\mathcal{K}_x)$ with $\frak{gl}(\fm_{-1}/\fk)$. We say that the graded nilpotent Lie algebra $\fg_{-}:=\fm/\fk$ with restriction of $I$ (to the quotient) is the nondegenerate part of the Levi--Tanaka algebra $(\fm,I)$, i.e., $$\fg_{i}=\fm_i {\rm \ for\ } i<-1 {\rm \ and\ }\fg_{-1}=\fm_{-1}/\fk.$$

\begin{def*}\label{def2dg}
A CR geometry with Levi--Tanaka algebra $(\fm,I)$ on a regular filtered manifold $(M,T^iM)$ is a called 2--nondegenerate if $\iota_x: \fk\to \frak{gl}(\fg_{-1})$ is injective for all $x\in M$. We say that the CR geometry is regularly 2--nondegenerate with  (regular) second--order Levi--Tanaka algebra $(\fg_-,I,\fk)$ if
\begin{enumerate}
\item the CR geometry is 2--nondegenerate and the image $\fk\subset \frak{gl}(\fg_{-1})$ of $\iota_x$ does not depend on $x$,
\item the image $\fk$ is contained in the Lie algebra $\frak{der}_0(\fg_-)$ of grading preserving derivations of $\fg_-$, and
\item $[[\fk,\fk],\fk]\subset \fk$ holds for the bracket in $\frak{der}_0(\fg_-)$.
\end{enumerate}
\end{def*}

In Section \ref{sec2}, we discuss the second--order Levi--Tanaka algebras of 2--nondegenerate CR submanifolds, in detail. We remark that as in the case of Levi--nondegenerate CR geometries of codimension higher than $1$, the regularity is (in general) not a generic assumption. Nevertheless, this does not restrict us to obtain the following theorem that summarizes our main results.

\begin{thm*}\label{main}
Let $G$ be a real simple Lie group and $H$ a Lie subgroup of $G$. A homogeneous space $G/H$ is a maximally symmetric model of a regularly 2--nondegenerate CR geometry if and only if $H$ is a subgroup of a parabolic subgroup $P$ of $G$ and there is a bigrading $\fg_{a,b}$ of the complexification of $\fg$ such that
\begin{enumerate}
\item $P/H=(G_0\rtimes \exp(\fp_+)/G_{0,I}\rtimes \exp(\fp_+))=G_0/G_{0,I}$ is a (pseudo)--Hermitian symmetric space,
\item $\fg_a\otimes \mathbb{C}=\sum_b \fg_{a,b}$ holds for the grading $\fg_i$ of $\fg$ corresponding $P$,
\item $\fg_{0,I}\otimes \mathbb{C}=\fg_{0,0},$
\item $\fg_{0}\otimes \mathbb{C}=\fg_{0,-1}\oplus \fg_{0,0}\oplus \fg_{0,1}$,
\item $\fg_{-1}\otimes \mathbb{C}=\fg_{-1,-1}\oplus \fg_{-1,0}$,
\item $\fg_{-2}\otimes \mathbb{C}=\fg_{-2,-1}.$
\end{enumerate}
\end{thm*}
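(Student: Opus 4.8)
The plan is to deduce the theorem from the extension of Tanaka's prolongation procedure \cite{Ta70} to the 2--nondegenerate setting. To a regular second--order Levi--Tanaka algebra $(\fg_-,I,\fk)$ one attaches a graded Lie algebra $\fg=\fg_{-2}\oplus\fg_{-1}\oplus\fg_0\oplus\fg_1\oplus\fg_2\oplus\cdots$ whose negative part is $\fg_-$ and whose degree--zero part is $\fg_0=\fg_{0,I}\oplus\fk$, where $\iota(\fk)\subset\frak{der}_0(\fg_-)$, where $\fg_{0,I}$ consists of the grading--preserving derivations of $\fg_-$ that preserve $I$ and normalize $\iota(\fk)$, and where $[\fk,\fk]\subset\fg_{0,I}$ by condition (3) of Definition~\ref{def2dg}; the positive part is built by the usual universal property, and $\fg$ is the Lie algebra of infinitesimal automorphisms of the maximally symmetric model, which is a (locally) homogeneous space $G/H$ with $H$ the isotropy group. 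I would first record two facts: $(\fg_0,\fg_{0,I})$ is a symmetric pair --- with odd part $\fk$ --- and it is a \emph{Hermitian} symmetric pair since $I$ descends to a complex structure on $\fk\cong\fg_0/\fg_{0,I}$; and, by the prolongation theorem of Section~\ref{sec2}, the infinitesimal automorphism algebra of any regularly 2--nondegenerate CR geometry with these data has dimension at most $\dim\fg$, with equality exactly on $G/H$. The theorem then amounts to translating simplicity of $\fg$ into conditions (1)--(6).

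\emph{From a simple model to (1)--(6).} Assume $\fg$ simple. The prolongation makes $\fg$ a filtered Lie algebra, and semisimplicity produces a grading element $E\in\fg_0$; using that $\fg_-$ has depth $2$ one shows the grading is $\fg=\fg_{-2}\oplus\cdots\oplus\fg_2$, that $\fp:=\fg_{\geq0}$ is parabolic with nilradical $\fp_+=\fg_1\oplus\fg_2$, and that $\fh=\fg_{0,I}\oplus\fp_+$ --- the positive part consists of infinitesimal automorphisms vanishing to higher order at the base point and so lies in $\fh$, while the degree--zero part of $\fh$ is precisely the stabilizer $\fg_{0,I}$ of the CR data --- which is (1). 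Next, the Hermitian symmetric pair $(\fg_0,\fg_{0,I})$ supplies a semisimple $Z\in\fg_0$ commuting with $E$, with $\ad Z$--eigenvalues $-1,0,1$ on $\fg_0\otimes\C$ and eigenspaces $\fg_{0,\mp1}$ the $\pm i$--eigenspaces of the complex structure on $\fk$ and $\fg_{0,0}=\fg_{0,I}\otimes\C$; this is (3) and (4). Propagating $Z$ to all of $\fg$, the joint $(\ad E,\ad Z)$--eigenspace decomposition is the bigrading $\fg_{a,b}$, with complex conjugation of $\fg\otimes\C$ relative to the real form $\fg$ acting by $\overline{\fg_{a,b}}=\fg_{a,a-b}$. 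The $\ad Z$--eigenvalues on $\fg_{-1}$ are $-1,0$, because $Z$ commutes with $I$ and normalizes the faithful action of $\fk$ on $\fg_{-1}$ (faithfulness is precisely 2--nondegeneracy) --- this is (5); then $\fg_{-2}=[\fg_{-1},\fg_{-1}]$ together with $[IX,IY]=[X,Y]$, which forces $[\fg_{-1,-1},\fg_{-1,-1}]=0=[\fg_{-1,0},\fg_{-1,0}]$, gives $\fg_{-2}\otimes\C=[\fg_{-1,-1},\fg_{-1,0}]=\fg_{-2,-1}$, which is (6). Condition (2) for $a<0$ is now immediate, and for $a\geq0$ it follows by Killing--form duality, since $Z$ is self--adjoint and the spectrum on $\fg_a$ is the negative of that on $\fg_{-a}$.

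\emph{From (1)--(6) to a simple model.} Conversely, let $\fg$ be real simple with a bigrading satisfying (1)--(6) and put $H=G_{0,I}\rtimes\exp(\fp_+)\subset P$. On $G/H$, the negative part of the $P$--grading together with $\fk\cong\fg_0/\fg_{0,I}$ gives a filtered manifold with the prescribed symbol, and declaring in $\fg_{-1}\otimes\C$ the summand of smaller second index to be the $+i$--eigenspace and the other the $-i$--eigenspace defines the complex structure $I$; (4)--(6) and the Jacobi identity yield $[IX,IY]=[X,Y]$, while (1) identifies the $\fk$--directions with a (pseudo)--Hermitian symmetric fibre. Injectivity of $\iota$ (2--nondegeneracy) holds because an element of $\fg_{0,1}$ or $\fg_{0,-1}$ annihilating $\fg_{-1}$ would generate, using that $\fg_{-1}$ generates $\fg_-$ and the Jacobi identity, a proper ideal of $\fg$ --- impossible by simplicity; regularity is automatic by homogeneity, conditions (2)--(3) of Definition~\ref{def2dg} being read off the bigraded bracket (note $[\fg_{0,1},\fg_{0,1}]\subset\fg_{0,2}=0$ and $[\fk,\fk]\subset\fg_{0,0}$). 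Finally, by the prolongation theory of Section~\ref{sec2} the extended prolongation of $(\fg_-,I,\fk)$ is $\fg$ itself (there is nothing in degree $>2$ since $\fp$ is parabolic with nilradical $\fp_+$), so $G/H$ is maximally symmetric. The classification is then completed by enumerating the real simple $\fg$ admitting such a bigrading; this runs parallel to the argument behind Theorem~\ref{nondegclas}, the difference being that $G/P$ here carries the analogous data but with the larger structure algebra $\fg_0$, whose Hermitian symmetric quotient is cut away by passing to $G/H$.

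\emph{The main obstacle.} The heart of the proof is the middle step: showing that the complex structure $I$, which a priori only splits $\fg_{-1}\otimes\C$, extends to an honest bigrading of the entire simple Lie algebra $\fg$ with \emph{exactly} the index ranges in (2)--(6) and no unexpected bigraded components. This combines the extended prolongation with the representation theory of the reductive $\fg_0$ on $\fg_{\pm1}$ --- to exclude spurious $\ad Z$--weights --- together with a careful use of the Hermitian symmetric condition (1) to identify the $b=\pm1$ part of $\fg_0$ with $\fk$; once this is established, the computation of $\fg_{-2}$ and the duality argument for the positive part are routine. In the converse direction, the delicate point is verifying genuine 2--nondegeneracy and the absence of any further prolongation, which is exactly where the simplicity of $\fg$ is used essentially.
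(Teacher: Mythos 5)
Your proposal has the right overall architecture (necessity via a grading element and a second semisimple element producing the bigrading; sufficiency via building a model and invoking a prolongation bound), and the necessity half is broadly in the spirit of the paper's Proposition \ref{neces}. But there is a genuine gap at the heart of the sufficiency direction: you invoke "the prolongation theorem of Section \ref{sec2}" to assert that every regularly 2--nondegenerate CR geometry with data $(\fg_-,I,\fk)$ has automorphism algebra of dimension at most $\dim\fg$, with equality exactly on $G/H$. No such theorem exists in the 2--nondegenerate setting a priori -- establishing it is precisely the main technical content of the paper (Theorem \ref{abspar}, proved in Section \ref{sec4}). Classical Tanaka prolongation does not apply off the shelf here: one is prolonging data in which $\fk$ sits inside degree $0$ but is realized geometrically inside $T^{-1}M$, so there are essential torsions valued in $\fk$ that cannot be normalized to zero, the chosen isomorphisms of $\fg_{-1}\oplus\fk$ with $T^{-1}M$ must be complex linear, and in the hypersurface (Heisenberg) case the first Tanaka prolongation of $\fg_-\oplus\fg_0$ is strictly larger than $\fg_1$ and requires an extra reduction (Lemma \ref{knf}(2)). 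In particular your claim that the "usual universal property" applied to $\fg_-\oplus(\fg_{0,I}\oplus\fk)$ returns $\fg$ is false in that case, and the maximality statement for curved geometries only follows after the cohomology vanishing of Lemma \ref{knf}(4)--(6) and the canonical partial connection of Proposition \ref{parccon} make the normalization and uniqueness work. Without this, the "maximally symmetric" half of the equivalence is unproven.

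Two further inaccuracies: the graded algebra is not $\fg_{-2}\oplus\cdots\oplus\fg_2$ in general -- the second--order Levi--Tanaka algebra $\fg_-$ may have depth $\mu>2$ (e.g.\ the entries with $|\Sigma_1|\geq 2$ such as $\mathfrak{so}(n,n)$ with $\Sigma_1=\{\alpha_{n-3},\alpha_{n-1},\alpha_n\}$), so your argument that "there is nothing in degree $>2$" does not apply and cannot be used to close off the prolongation; and your identification of $\fg_{0,I}$ as \emph{all} grading--preserving, $I$--preserving derivations normalizing $\iota(\fk)$ prejudges what the degree--zero part of the symmetry algebra is, which again is something the paper only obtains after the prolongation/normalization analysis (and after the maximality argument of Lemma \ref{autgrad} and Proposition \ref{neces} forces the grading element into $\fg$). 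The necessity direction of your sketch could be repaired along the paper's lines, but as written the equivalence rests on an unproved bound.
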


In Section \ref{sec31}, we will show in Proposition \ref{neces} that the second--order Levi--Tanaka algebras of semisimple maximally symmetric models $G/H$ of 2--nondegenerate CR submanifolds are regularly 2--nondegenerate and we obtain as a consequence of the (semi)simplicity a bigrading $\fg_{a,b}$ satisfying the conditions of the Theorem \ref{main}. This proves one implication in the Theorem \ref{main} and we obtain the following classification of such bigradings in Section \ref{sec33}.

\begin{thm*}\label{mainclass}
A quadruple $G,H,P,\fg_{a,b}$ satisfies the conditions of Theorem \ref{main} if and only if there is entry in Tables \ref{realclasA}, \ref{realclasB} such that $\fg$ is the Lie algebra of $G$, $\fg_{0,I}'$ is the component of $\fg_{0,I}$ acting effectively on $\fk$, the simple roots in the ordered sets $\Sigma_1,\Sigma_2$ determine the bigrading $\fg_{a,b}$, where $\Sigma_1$ satisfy the usual restrictions c.f. \cite[Section 3.2.9]{parabook} and $\Sigma_1,\Sigma_2$ satisfy the additional restrictions specified in the tables.
\noindent
\begin{table}[h!]\caption{Classification of classical simple maximally symmetric models of 2--nondegenerate CR geometries}\label{realclasA}
\begin{tabular}{|c|c|c|}
\hline
$\mathfrak{g}$ & $\Sigma_1$ & restrictions\\
$\fg_{0,I}'$& $\Sigma_2$ &  restrictions \\
\hline
$\mathfrak{sl}(n+1,\mathbb{R})$&$\{\alpha_r,\alpha_{r+2s}\}$ & \\
$\frak{gl}(s,\mathbb{C})$ &$\{\alpha_{r+s}\}$& \\
\hline
$\mathfrak{sl}(n+1,\mathbb{H})$&$\{\alpha_{2r},\alpha_{2s}\}$   & \\
$\frak{gl}(s-r,\mathbb{C})$&$\{\alpha_{r+s}\}$& \\
\hline
$\mathfrak{su}(p,n+1-p)$&$\{\alpha_r,\alpha_{n-r}\}$ & $r<s<n-r$ \\
$\frak{su}(q,s-r-q)\oplus \frak{u}(p-r-q,n+1-p-s+q)$&$\{\alpha_s\}$ &$0\leq q<s-r$ \\
\hline
$\mathfrak{so}(p,2n+1-p)$& $\{\alpha_r,\alpha_{r+2}\}$ & \\
 $\frak{so}(2)$&$\{\alpha_{r+1}\}$ &\\
 \hline
$\mathfrak{so}(p,2n-p)$ or $\mathfrak{so}^*(2n)$& $\{\alpha_r,\alpha_{r+2}\}$ & $r<n-3$\\
 $\frak{so}(2)$&$\{\alpha_{r+1}\}$ &\\
\hline
$\mathfrak{so}(p,q)$& $\{\alpha_2\}$ & \\
$\frak{so}(2)$& $\{\alpha_1\}$&\\
\hline
$\mathfrak{so}(p,2n-p)$& $\{\alpha_{p-2q}\}$  &$1<p-2q<n-1$\\
$\frak{u}(q,n-p+q)$& $\{\alpha_n\}$&$0\leq q$\\
\hline
$\mathfrak{so}^*(2n)$& $\{\alpha_{r}\}$&$r<n-1$\\
$\frak{u}(p,n-r-p)$& $\{\alpha_n\}$&\\
\hline
$\mathfrak{so}(n,n), \mathfrak{so}(n-1,n+1)$ or $\mathfrak{so}^*(4m+2)$& $\{\alpha_{n-3},\alpha_{n-1},\alpha_n\}$ &$n=2m+1$ \\
$\frak{so}(2)$& $\{\alpha_{n-2}\}$&\\
\hline
$\mathfrak{sp}(2n,\mathbb{R})$& $\{\alpha_r\}$&$0\leq p$ \\
$\frak{u}(p,n-r-p)$ &$\{\alpha_n\}$&\\
\hline
$\mathfrak{sp}(p,n-p)$& $\{\alpha_r\}$&  \\
$\frak{u}(p-r,n-p-r)$&$\{\alpha_n\}$&\\
\hline
\end{tabular}
\end{table}
\noindent
\begin{table}[h!]\caption{Classification of exceptional simple maximally symmetric models of 2--nondegenerate CR geometries}\label{realclasB}
\begin{tabular}{|c|c|c|c|}
\hline
$\mathfrak{g}$ & $\Sigma_1$ & $\Sigma_2$ &$\fg_{0,I}'$ \\
\hline
$\mathfrak{g}_2(2)$ & $\{\alpha_1\}$&$\{\alpha_2\}$&$\frak{so}(2)$ \\
\hline
$\mathfrak{f}_4(4)$ & $\{\alpha_2\}$&$\{\alpha_1\}$&$\frak{so}(2)$ \\
$\mathfrak{f}_4(4)$ & $\{\alpha_1,\alpha_3\}$&$\{\alpha_2\}$&$\frak{so}(2)$ \\
\hline
$\mathfrak{e}_6(6)$ & $\{\alpha_2\}$&$\{\alpha_1\}$&$\frak{so}(2)$ \\
$\mathfrak{e}_6(2)$ & $\{\alpha_6\}$&$\{\alpha_1\}$&$\frak{so}(2)\oplus \frak{u}(2,3)$\\
$\mathfrak{e}_6(-14)$ & $\{\alpha_6\}$&$\{\alpha_1\}$&$\frak{so}(2)\oplus \frak{u}(5)$ \\
$\mathfrak{e}_6(6)$ & $\{\alpha_3\}$&$\{\alpha_6\}$&$\frak{so}(2)$ \\
$\mathfrak{e}_6(2)$ & $\{\alpha_3\}$&$\{\alpha_6\}$&$\frak{so}(2)$\\
$\mathfrak{e}_6(6)$ & $\{\alpha_1,\alpha_3\}$&$\{\alpha_2\}$&$\frak{so}(2)$ \\
$\mathfrak{e}_6(6)$ & $\{\alpha_2,\alpha_4,\alpha_6\}$&$\{\alpha_3\}$&$\frak{so}(2)$ \\
$\mathfrak{e}_6(2)$ & $\{\alpha_2,\alpha_4,\alpha_6\}$&$\{\alpha_3\}$&$\frak{so}(2)$ \\
\hline
$\mathfrak{e}_7(7)$ & $\{\alpha_2\}$&$\{\alpha_1\}$&$\frak{so}(2)$\\
$\mathfrak{e}_7(-5)$ & $\{\alpha_2\}$&$\{\alpha_1\}$&$\frak{so}(2)$ \\
$\mathfrak{e}_7(-25)$ & $\{\alpha_2\}$&$\{\alpha_1\}$&$\frak{so}(2)$ \\
$\mathfrak{e}_7(7)$ & $\{\alpha_5\}$&$\{\alpha_6\}$&$\frak{so}(2)$ \\
$\mathfrak{e}_7(-5)$ & $\{\alpha_5\}$&$\{\alpha_6\}$&$\frak{so}(2)$ \\
$\mathfrak{e}_7(7)$ & $\{\alpha_6\}$&$\{\alpha_1\}$&$\frak{so}(2)\oplus \frak{so}(4,4)$ \\
$\mathfrak{e}_7(-25)$ & $\{\alpha_6\}$&$\{\alpha_1\}$&$\frak{so}(2)\oplus \frak{so}(1,7)$ \\
$\mathfrak{e}_7(7)$ & $\{\alpha_1,\alpha_3\}$&$\{\alpha_2\}$&$\frak{so}(2)$\\
$\mathfrak{e}_7(7)$ & $\{\alpha_2,\alpha_4\}$&$\{\alpha_3\}$&$\frak{so}(2)$ \\
$\mathfrak{e}_7(-5)$ & $\{\alpha_2,\alpha_4\}$&$\{\alpha_3\}$&$\frak{so}(2)$ \\
$\mathfrak{e}_7(7)$ & $\{\alpha_4,\alpha_6\}$&$\{\alpha_5\}$&$\frak{so}(2)$ \\
$\mathfrak{e}_7(-5)$ & $\{\alpha_4,\alpha_6\}$&$\{\alpha_5\}$&$\frak{so}(2)$ \\
$\mathfrak{e}_7(7)$ & $\{\alpha_3,\alpha_5,\alpha_7\}$&$\{\alpha_4\}$&$\frak{so}(2)$ \\
\hline
$\mathfrak{e}_8(8)$ & $\{\alpha_2\}$&$\{\alpha_1\}$&$\frak{so}(2)$\\
$\mathfrak{e}_8(-24)$ & $\{\alpha_2\}$&$\{\alpha_1\}$&$\frak{so}(2)$ \\
$\mathfrak{e}_8(8)$ & $\{\alpha_6\}$&$\{\alpha_7\}$&$\frak{so}(2)$ \\
$\mathfrak{e}_8(8)$ & $\{\alpha_5\}$&$\{\alpha_8\}$&$\frak{so}(2)$ \\
$\mathfrak{e}_8(8)$ & $\{\alpha_1,\alpha_3\}$&$\{\alpha_2\}$&$\frak{so}(2)$ \\
$\mathfrak{e}_8(-24)$ & $\{\alpha_1,\alpha_3\}$&$\{\alpha_2\}$&$\frak{so}(2)$ \\
$\mathfrak{e}_8(8)$ & $\{\alpha_2,\alpha_4\}$&$\{\alpha_3\}$&$\frak{so}(2)$ \\
$\mathfrak{e}_8(8)$ & $\{\alpha_3,\alpha_5\}$&$\{\alpha_4\}$&$\frak{so}(2)$ \\
$\mathfrak{e}_8(8)$ & $\{\alpha_5,\alpha_7\}$&$\{\alpha_6\}$&$\frak{so}(2)$ \\
$\mathfrak{e}_8(8)$ & $\{\alpha_4,\alpha_6,\alpha_8\}$&$\{\alpha_5\}$&$\frak{so}(2)$ \\
\hline
\end{tabular}
\end{table}
\end{thm*}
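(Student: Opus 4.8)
The plan is to reduce the statement to a combinatorial classification of bigradings of complex simple Lie algebras carrying a compatible real structure, and then to run through the simple types one at a time. First I would rephrase the conditions of Theorem~\ref{main} entirely in terms of the root system of $\fg\otimes\C$. A bigrading $\fg_{a,b}$ is a pair of commuting grading elements; since the $a$--grading is the one coming from the parabolic $\fp$, it is encoded by the ordered set $\Sigma_1$ of crossed simple roots, with $a$ the $\Sigma_1$--height of a root, refined by a second ordered set $\Sigma_2$, with $b$ the $\Sigma_2$--height (here (2) says the bigrading refines the parabolic grading and (3) defines $\fg_{0,I}$). Using that $\fg_{a,b}$ and $\fg_{-a,-b}$ are dual under the Killing form, conditions (4),(5),(6) become: every root of $\Sigma_1$--height $0$ has $\Sigma_2$--height in $\{-1,0,1\}$, every root of $\Sigma_1$--height $-1$ has $\Sigma_2$--height in $\{-1,0\}$, and every root of $\Sigma_1$--height $-2$ has $\Sigma_2$--height $-1$. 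A short computation shows these are equivalent, for an admissible $\Sigma_1$, to requiring that the grading element $a-b$ have all eigenvalues in $\{-1,0,1\}$ and that $\fg_{-2}\otimes\C$ lie in its $(-1)$--eigenspace; thus the $(a-b)$--grading $\fg\otimes\C=\fn_-\oplus\fl\oplus\fn_+$ is a very short one (with $\fn_\pm$ abelian), and restricting it to $\fg_0\otimes\C$ exhibits $(\fg_0\otimes\C,\fg_{0,0})$ as a complexified symmetric pair. Condition (1) then becomes the requirement that the real form $\fg$ have its conjugation preserve $\fg_{0,0}$ and exchange $\fg_{0,1}\leftrightarrow\fg_{0,-1}$, i.e.\ that $G_0/G_{0,I}$ be (pseudo)--Hermitian symmetric; the ``usual restrictions'' on $\Sigma_1$ of \cite[Section~3.2.9]{parabook} are precisely what provides the $G_0$--invariant complex structure $I$ on $\fg_{-1}$.

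For the complex classification I would fix $\fg\otimes\C$ of each simple type, list the admissible $\Sigma_1$ (the CR gradings of \cite{parabook}), and for each of them enumerate the refinements $\Sigma_2$, necessarily disjoint from $\Sigma_1$, for which the three height conditions hold and $\Sigma_2$ induces a very short grading of the Levi factor. The decisive constraint is condition (6): requiring every $\Sigma_1$--depth--$2$ root to have $\Sigma_2$--height $-1$ pins down the location of $\Sigma_2$ relative to $\Sigma_1$ in the Dynkin diagram very tightly---roughly, $\Sigma_2$ must sit symmetrically between the nodes of $\Sigma_1$---and, together with (4) and (5), cuts the possibilities down to finitely many families and sporadic cases. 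For the classical types this produces the parametrized families of Table~\ref{realclasA} once one imposes the inequalities that keep the grading honestly two--step ($\fg_{-2}\ne 0$) and two--nondegenerate rather than Levi--nondegenerate or of higher order; for the exceptional types one inspects each admissible pair $(\Sigma_1,\Sigma_2)$ directly in the root system, which gives Table~\ref{realclasB}.

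Finally I would pass to the real forms: given a complex solution $(\fg\otimes\C,\Sigma_1,\Sigma_2)$, the compatible real forms $\fg$ are the simple ones whose Satake involution preserves $\Sigma_1$ as a set and sends the $\Sigma_2$--grading element to its negative (so that each $\fg_a$ is real and $\fg_{0,1},\fg_{0,-1}$ are swapped); these are read off from the Satake diagrams, and the (pseudo)--Hermitian symmetry of $G_0/G_{0,I}$ then fixes the real form $\fg_{0,I}$ of $\fg_{0,0}$---for instance $\mathfrak{sl}(n+1,\C)$ gives $\mathfrak{sl}(n+1,\R)$, $\mathfrak{su}(p,q)$ and $\mathfrak{sl}(m,\mathbb{H})$ with the ordered sets and parameter ranges displayed, the inequalities such as $r<s<n-r$ being exactly those for which the involution fixes $\Sigma_1$ setwise and $\fk\ne 0$. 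For the last column one computes $\fk=\Imm(\iota)\subset\frak{der}_0(\fg_-)$ explicitly from the bigrading, identifies the kernel of the $\fg_{0,I}$--action on $\fk$, and records the complementary ideal $\fg_{0,I}'$. The ``if'' direction is then the routine verification that each table entry, with the bigrading it prescribes, satisfies (1)--(6). I expect the main obstacle to be the exhaustiveness of the case analysis---in particular ensuring that no sporadic pair $(\Sigma_1,\Sigma_2)$ or compatible real form is overlooked for $E_6,E_7,E_8$, and that the boundary restrictions in the classical families (such as $r<n-3$ or $n=2m+1$) correctly exclude the degenerations where two--nondegeneracy fails; the conceptual input that keeps the analysis finite and tractable is the rigidity of condition (6).
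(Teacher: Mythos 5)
Your overall strategy coincides with the paper's: encode the bigrading by a pair of disjoint sets of simple roots $(\Sigma_1,\Sigma_2)$, with $a$ the $\Sigma_1$--height and $b$ the $\Sigma_2$--height, translate conditions (2)--(6) of Theorem \ref{main} into height restrictions, enumerate the admissible pairs type by type over the complex simple Lie algebras (the paper derives the alternation rules, proves $|\Sigma_2|=1$, and records the result in a table of admissible complex bigradings), and then pass to the real forms admitting the $\Sigma_1$--grading and select those for which $(\fg_0',\fg_{0,I}')$ is (pseudo)--Hermitian symmetric — which the paper does by comparison with Berger's classification, where you propose an equivalent Satake--involution check.

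There is, however, a genuinely false step. Conditions (4)--(6) are \emph{not} equivalent to the requirement that the grading element $a-b$ have all eigenvalues in $\{-1,0,1\}$ with abelian $\fn_\pm$: the hypotheses of Theorem \ref{main} constrain only $\fg_0,\fg_{-1},\fg_{-2}$, and in the entries whose $\Sigma_1$--grading has depth $\geq 3$ one has $\fg_{-3}\otimes \C=\fg_{-3,-1}\oplus\fg_{-3,-2}$ with $\fg_{-3,-1}\neq 0$, on which $a-b=-2$. Concretely, for $\fg\otimes\C=\mathfrak{so}(2n+1,\C)$ with $\Sigma_1=\{\alpha_1,\alpha_3\}$, $\Sigma_2=\{\alpha_2\}$ (the $\mathfrak{so}(p,2n+1-p)$ row of Table \ref{realclasA} with $r=1$), the root $e_1+e_3=\alpha_1+\alpha_2+2\alpha_3+\dots+2\alpha_n$ has $\Sigma_1$--height $3$ and $\Sigma_2$--height $1$, so the $(a-b)$--grading is not a $|1|$--grading; the same failure occurs for the $D$--type rows with $|\Sigma_1|\in\{2,3\}$, for most exceptional entries, and even for deep single--node cases such as $\mathfrak{e}_8$, $\Sigma_1=\{\alpha_5\}$, $\Sigma_2=\{\alpha_8\}$. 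If you used the ``very short $(a-b)$--grading'' as the enumeration filter, you would discard a large part of Tables \ref{realclasA}, \ref{realclasB}; the enumeration has to run on the three height conditions themselves (equivalently, as the paper does, on checking that the $-3$ and $-4$ components of the $\Sigma_1\cup\Sigma_2$--grading are exactly $\fg_{-2,-1}$ and $\fg_{-3,-1}$), and the symmetric--pair structure of $(\fg_0\otimes\C,\fg_{0,0})$ already follows from condition (4) alone. A second, smaller inaccuracy: the conjugation cannot send the $\Sigma_2$--grading element $E_2$ to $-E_2$, since that would force $\sigma(\fg_{-2,-1})=\fg_{-2,1}=0$; since $\sigma(E_1)=E_1$ and $\sigma$ swaps $\fg_{0,\pm1}$ and $\fg_{-1,-1}\leftrightarrow\fg_{-1,0}$, the correct condition is $\sigma(E_2)=E_1-E_2$, i.e.\ $\sigma$ acts by $-1$ on $2E_2-E_1$. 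With these corrections your real--form selection amounts to the same check the paper performs against the classification of (pseudo)--Hermitian symmetric pairs.
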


We show in Lemma \ref{absmod} and Proposition \ref{embed} that we can realize all entries of the classification as regularly 2--nondegenerate CR geometries and regularly 2--nondegenerate real submanifolds in complex space with particular second--order Levi--Tanaka algebras $(\fg_-,I,\fk)$, respectively. We call these realizations a homogeneous model $G/H$ of 2--nondegenerate CR submanifold with second--order Levi--Tanaka algebra $(\fg_-,I,\fk)$ and a standard model $\phi(\fg_-\oplus \fk)$ of 2--nondegenerate CR submanifold with second--order Levi--Tanaka algebra $(\fg_-,I,\fk)$, respectively. We compute explicit formulas for the embeddings $\phi(\fg_-\oplus \fk)$ into complex space and the corresponding defining equations for the nonexceptional hypersurface models in Section \ref{sec5}. 

The Lie algebra $\fg$ of infinitesimal CR automorphisms of the homogeneous models $G/H$ of 2--nondegenerate CR submanifold with second--order Levi--Tanaka algebras $(\fg_-,I,\fk)$ corresponding to entries of our classification provide a lower bound on the dimension of the maximally symmetric models. In Section \ref{sec4}, we prove that this is also upper bound by solving the equivalence problem for regularly 2--nondegenerate CR geometries with such second--order Levi--Tanaka algebra $(\fg_-,I,\fk)$ by proving the following theorem.

\begin{thm*}\label{abspar}
Suppose that $G,P,H,\fg_{a,b}$ satisfy the hypotheses of Theorem \ref{main} and $G/H$ is a homogeneous model of 2--nondegenerate CR submanifold with second--order Levi--Tanaka algebra $(\fg_-,I,\fk)$. Then there are $G_{0,I}$--invariant normalization conditions that provide equivalence of categories between

\begin{itemize}
\item the category of regularly 2--nondegenerate CR geometries $M$ with second--order Levi--Tanaka algebra $(\fg_-,I,\fk)$,
\item the category of $H$--fiber bundles $\ba\to M$ with a $G_{0,I}$--invariant $\fg$--valued absolute parallelism $\om$ satisfying the normalization conditions.
\end{itemize}

In particular, $dim(\fg)$ bounds the dimension of Lie algebra of infinitesimal CR automorphisms of all regularly 2--nondegenerate CR geometries with second--order Levi--Tanaka algebra $(\fg_-,I,\fk)$ and $G/H$ is the maximally symmetric model.
\end{thm*}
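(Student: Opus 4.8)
The plan is to run an adapted version of the Tanaka prolongation / Cartan equivalence method, built on top of the parabolic geometry structure that the bigrading $\fg_{a,b}$ supplies. The starting point is the observation (made in Section \ref{sec2}) that a regularly $2$--nondegenerate CR geometry $M$ with second--order Levi--Tanaka algebra $(\fg_-,I,\fk)$ is, by Definition \ref{def2dg}, in particular a filtered manifold whose symbol is the nilpotent Lie algebra $\fm=\fg_-\oplus\text{(Levi kernel)}$ together with the complex structure $I$ and the fixed subalgebra $\fk\subset\frak{der}_0(\fg_-)$. The decomposition of $\fg$ coming from the grading $\fg_i$ (corresponding to $P$) together with the finer bigrading identifies $\fk$ with a piece of $\fg_{0,\pm1}$ and $\fg_{0,I}$ with $\fg_{0,0}$ (conditions (3)--(4) of Theorem \ref{main}), so that $\fg=\fg_-\oplus\fk\oplus\fg_{0,I}\oplus\fk^*\oplus\fp_+$ is precisely the algebraic prolongation datum we need: the nonpositive part $\fg_-\oplus\fk\oplus\fg_{0,I}$ is the infinitesimal symmetry algebra of the flat model at order $\leq 0$, and conditions (5)--(6) guarantee that this is the \emph{full} Tanaka prolongation of $\fg_-\oplus(\fk\oplus\fg_{0,I})$ in the appropriate (two--step filtered, $I$--compatible) category — equivalently that $\fg$ is, as in Remark \ref{remsem}, the maximal symmetry algebra. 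The first step is therefore to set up the relevant structure bundle: a principal $H$--bundle $\ba\to M$ (with $H\subset P$ the subgroup from the hypotheses, whose Lie algebra is $\fg_{0,I}\oplus\fp_+$ rather than the full $\fp$, reflecting that $P/H=G_0/G_{0,I}$ is a Hermitian symmetric space by condition (1)) together with a $\fg$--valued absolute parallelism $\om$ which reproduces the Maurer--Cartan form on the model $G/H$.

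The second step is the normalization. One builds $\om$ in stages, exactly as in the parabolic case: start from the natural frame bundle of the filtered manifold reduced by the $G_{0,I}$--structure (this reduction exists because the structure group of the associated graded is exactly $G_{0,I}$ acting on $\fg_-$, by the CR and $2$--nondegeneracy conditions), then successively prolong, at each step adjusting the soldering form by the part of $\fp_+$--valued gauge freedom. The curvature $K=d\om+\tfrac12[\om,\om]$ is a two--form valued in $\fg$, and one writes it in the bigraded components; the key algebraic input is a Kostant--type codifferential $\partial^*$ on the cochain complex $\Lambda^2(\fg_-)^*\otimes\fg$ adapted to the bigrading, whose kernel furnishes the \emph{normalization conditions}. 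Because $\fg$ is semisimple one has the Hodge decomposition $C=\Imm\partial\oplus\ker\square\oplus\Imm\partial^*$, and the normalization ``$\partial^*K=0$'' kills exactly the gauge freedom at each positive homogeneity; the $G_{0,I}$--invariance of $\partial^*$ (it is built from the Killing form, which is $G$--invariant, hence $G_{0,I}$--invariant) gives the $G_{0,I}$--invariance of the normalization conditions claimed in the statement. One must check that in the present filtered setting — where the Levi kernel $\fk$ sits in degree $-1$ but acts by degree--$0$ derivations — the homogeneity grading on cochains still makes $\partial^*$ lower homogeneity strictly on the relevant range, so that the inductive construction of $\om$ terminates; this is where conditions (5)--(6) of Theorem \ref{main}, which bound where the prolongation stops, are used.

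The third step is functoriality and the bound. Once $\om$ is canonically attached to $M$ (uniqueness up to the $H$--action, from uniqueness of the normal solution of the structure equations), a morphism of $2$--nondegenerate CR geometries lifts uniquely to a morphism of the pairs $(\ba,\om)$ preserving $\om$, and conversely such a morphism descends; this gives the asserted equivalence of categories. The dimension bound then follows by the standard argument: an infinitesimal CR automorphism of $M$ lifts to an $H$--invariant vector field $\xi$ on $\ba$ with $\mathcal L_\xi\om=0$, and $\xi\mapsto\om(\xi)(u_0)\in\fg$ is injective for fixed $u_0\in\ba$ (an absolute parallelism has no nontrivial automorphisms fixing a point and acting trivially on the parallelism), so the automorphism algebra has dimension $\leq\dim\fg$; equality is realized by $G/H$ itself, whose Maurer--Cartan form is normal because the model is flat ($K\equiv0$). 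I expect the main obstacle to be the second step: verifying that the bigraded Kostant codifferential genuinely removes \emph{all} the freedom and that no residual curvature component of positive homogeneity survives unnormalized — i.e.\ that the cohomology $H^2$ in positive homogeneities relative to $\fg_-\oplus\fk\oplus\fg_{0,I}$ is concentrated where conditions (5)--(6) predict, so that the prolongation does not ``overshoot''. This is precisely the point where the classification in Theorem \ref{mainclass} is needed as input, since a case-free cohomological vanishing statement is not available; one reduces it to the Hermitian symmetric structure of $P/H$ and a Kostant Bott--Borel--Weil computation entry by entry, or more efficiently to the abstract prolongation rigidity already extracted in Proposition \ref{neces} and Remark \ref{remsem}.
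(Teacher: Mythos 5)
There is a genuine gap, and it sits exactly where you yourself flag the ``main obstacle'': your step two. You propose to normalize via a Kostant--type codifferential $\partial^*$ on $\Lambda^2(\fg_-)^*\otimes\fg$ and a Hodge decomposition, i.e.\ to treat the problem as if it produced a normal Cartan connection in the parabolic style. The paper argues this is precisely what cannot be done here: the output is only an $H$--fiber bundle with a $G_{0,I}$--invariant absolute parallelism, not a Cartan connection, because (i) there are essential torsions/fundamental invariants with values in $\fk\subset\fg_0$ (the invariant $W$ in the $\frak{sp}(4,\R)$ example) which obstruct the usual harmonic normalization scheme, (ii) the requirement that the identification of $T^{-1}M$ with $\fg_{-1}\oplus\fk$ be complex linear restricts the admissible normalizations, and (iii) the worked example in Section \ref{exam} shows that from homogeneity $3$ on the natural candidate normalizations depend on the choice of the section $\phi$ (the Weyl--structure--like gauge) unless $W=0$; the condition $\partial^*R=0$ is available only in the integrable case $W=0$, and whether a $\phi$--independent (Cartan--connection--type) normalization exists at all is left open. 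So the claim that ``$\partial^*K=0$ kills exactly the gauge freedom at each positive homogeneity'' is unjustified in this setting, and a case--by--case Bott--Borel--Weil computation does not repair it, because the relevant complex is not the parabolic one for $(\fg,\fp)$.

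The second missing ingredient is the step that makes Tanaka's machinery applicable at all. Your symbol ``$\fg_-\oplus(\text{Levi kernel})$'' is degenerate ($\fk$ is an ideal in degree $-1$), so Tanaka prolongation cannot be run on it directly; the paper instead works with the nondegenerate symbol $\fg_-$ and encodes the Levi kernel directions through a $\fg_0$--valued form $\theta_0$, i.e.\ a distinguished partial connection on the Levi kernel. Proposition \ref{parccon} shows this partial connection exists and is unique, and this is where the real cohomological input enters: the vanishing of $H^1$ and $H^2$ of $\fg_-\otimes\C\oplus\fg_{0,-1}$ with values in $\fg\otimes\C$ in homogeneities $0,1$ (Claim (6) of Lemma \ref{knf}, computed in the cited reference), used via a Bianchi--identity argument to make the relevant curvature component exact. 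Only after this does one apply Tanaka's original prolongation to the resulting infinitesimal $\fg_0$--structure, with termination guaranteed by Claims (1)--(2) of Lemma \ref{knf} (not by conditions (5)--(6) of Theorem \ref{main} as you suggest), and with an extra reduction of the first prolongation to $\fg_1$ in the Heisenberg/hypersurface case, obtained by normalizing the complex antilinear part of the torsion in $\fg_{-1}^*\wedge\fk^*\otimes\fk$. Your third step (functoriality, the lift of automorphisms, and the bound $\dim\fg$) is fine once a canonical parallelism exists, but as written the construction producing it is not established.
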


In fact, we construct (in Proposition \ref{parccon}) an infinitesimal $\fg_0$--structure on a regular filtered manifold with symbol $\fg_-$ and apply on it (in the first step infinitesimal version of) the Tanaka prolongation theory \cite{Ta70}. We remark that there are more refined versions of the Tanaka prolongation theory, however, we can not use them, because they require more restrictive normalization conditions that we can not satisfy, because:

\begin{itemize}
\item There are essential torsions/fundamental invariants that take value in $\fk\subset \fg_0$ and thus cannot be normalized to $0$ as in the case of usual $G_0$--structures on regular filtered manifolds with symbol $\fg_-$.
\item The fact that we choose complex linear isomorphism of $\fg_{-1}\oplus \fk$ with $T^{-1}M$ restricts the possible normalization conditions.
\item According to Lemma \ref{knf}, there is an additional reduction of the first Tanaka prolongation of $\fg_-\oplus \fg_0$ in the hypersurface case.
\end{itemize}

Let us emphasize that these are the main reasons, why we have chosen our terminology to be analogous to the Tanaka's prolongation theory. In particular, we distinguish our selves from the works of Santi \cite{Sa15} and Porter and Zelenko \cite{PZ17}. Santi does not provide a model for all second--order Levi--Tanaka algebras of CR hypersurfaces that he considers (under the name abstract core), while the formula we will obtain in Proposition \ref{embed} can be clearly used for all Levi--Tanaka algebras (although the properties of the corresponding models outside of $0$ need further investigation). Porter and Zelenko restrict themselves to CR hypersurfaces with one dimensional Levi kernel and assume that there is a particular bigrading of the complexification of $\fg$, while we will obtain this bigrading in Proposition \ref{neces} as a consequence of (semi)simplicity. Their work provides two series of maximally symmetric models with $\fg=\mathfrak{so}(p+2,q+2)$ or $\fg=\mathfrak{so}^*(2n+4)$ that are simple, however we will obtain in Section \ref{sec5} explicit defining equations for these models. The case $\fg=\frak{so}(2,3)\cong \frak{sp}(4,\mathbb{R})$ corresponds to the case of uniformly 2--nondegenerate CR hypersurfaces in $\mathbb{C}^3$ and we discuss our solution of the equivalence problem for this case in Section \ref{exam}, in detail.

\section{Real submanifolds in $\mathbb{C}^N$ and 2--nondegenerate CR submanifolds}\label{sec2}

On a real submanifold $M\subset \mathbb{C}^N$ there is the maximal complex subspace $T^{-1}M=TM\cap i(TM)$ with complex structure $\mathcal{I}$ induced by multiplication by $i$. There is an open dense subset, where $T^{-1}M$ defines a filtration $T^{-1}M\subset \dots \subset T^{\mu}M\subset TM$, where $T^{\mu}M$ is integrable. Therefore, we can consider leaves of the corresponding foliation and assume that $(M,T^iM)$ is a filtered manifold with a complex structure $\mathcal{I}_x$ on each $T^{-1}_xM$ and say that the triple $(M,T^iM,\mathcal{I})$ is a CR submanifold. On the other hand, the assumption that the filtration is regular is not a generic assumption and we will not assume it in this section.

Further, there is an open dense subset of $M$, where the Levi kernel $\mathcal{K}$ is a distribution. We observe that $[\mathcal{K},\mathcal{K}]\subset \mathcal{K}$ and $[\mathcal{K},T^{-1}M]\subset T^{-1}M$ hold as a consequence of the definition of Levi kernel. Therefore, $[\mathcal{K},T^{-1}M/\mathcal{K}]\subset T^{-1}M/\mathcal{K}$ and we see that \begin{align*}
[A,fX]+\mathcal{I}[A,f\mathcal{I}(X)]&=f([A,X]+\mathcal{I}[A,\mathcal{I}(X)])+(A.f)X+\mathcal{I}((A.f)\mathcal{I}(X))\\
&=f([A,X]+\mathcal{I}[A,\mathcal{I}(X)])
\end{align*}
 holds for sections $A$ of $\mathcal{K}$ and $X$ of $T^{-1}M$ and smooth function $f$ and its directional derivative $A.f$. Therefore, the complex anti--linear part of the Lie bracket of these sections is algebraic and the map $\iota_x: \mathcal{K}_x\to \frak{gl}(T^{-1}_xM/\mathcal{K}_x)$ is well--defined. On 2--nondegenerate CR manifold, we identify $\mathcal{K}_x$ with its image in $\frak{gl}(T^{-1}_xM/\mathcal{K}_x)$.

Now, at each point of a 2--nondegenerate CR submanifold $(M,T^iM,\mathcal{I})$, we have the Lie algebra $\fm_x:=(gr(T_xM),\mathcal{L}_x)$, the complex structure $\mathcal{I}_x$ and subspace $\mathcal{K}_x\subset \frak{gl}(T^{-1}_xM/\mathcal{K}_x)$. Let us summarize the properties of these objects:

\begin{lem*}\label{LTa}
\begin{enumerate}
\item $(\fm_x,\mathcal{I}_x)$ is a Levi--Tanaka algebra, i.e., $$\mathcal{L}_x(\mathcal{I}_x(X),\mathcal{I}_x(Y))=\mathcal{L}_x(X,Y)$$ for all $X,Y\in T^{-1}_xM.$
\item $(\fm_x/\mathcal{K}_x,\mathcal{I}_x|_{T^{-1}_xM/\mathcal{K}_x})$ is a nondegenerate Levi--Tanaka algebra.
\item The action of $\mathcal{K}_x$ on $gr_{-1}(T_xM)$ extends trivially on $gr_{-2}(T_xM)$.
\item The action of $\mathcal{I}_x|_{\mathcal{K}_x}$ is given by the composition the endomorphisms with $\mathcal{I}_x|_{T^{-1}_xM/\mathcal{K}_x}$
\end{enumerate}
\end{lem*}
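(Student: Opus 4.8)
The plan is to verify each of the four assertions of Lemma \ref{LTa} directly from the definitions recalled in this section, treating (1)--(2) as essentially formal consequences of the compatibility of $\mathcal{I}$ with the Levi bracket, and (3)--(4) as consequences of the algebraicity computation displayed just before the statement. For (1), I would start from the ambient complex structure $i$ on $T\mathbb{C}^N$: the integrability of the complex structure forces $[T^{1,0}M,T^{1,0}M]\subset T^{1,0}M\otimes\mathbb{C}$ modulo the complexified tangent bundle, and passing to the associated graded this says exactly that the complexified Levi bracket vanishes on $\fm_x^{1,0}\times\fm_x^{1,0}$ and on $\fm_x^{0,1}\times\fm_x^{0,1}$. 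Decomposing $X=X^{1,0}+X^{0,1}$, $Y=Y^{1,0}+Y^{0,1}$ and using bilinearity, the only surviving terms in $\mathcal{L}_x(X,Y)$ are the mixed ones $\mathcal{L}_x(X^{1,0},Y^{0,1})+\mathcal{L}_x(X^{0,1},Y^{1,0})$; since $\mathcal{I}_x(X^{1,0})=iX^{1,0}$ and $\mathcal{I}_x(X^{0,1})=-iX^{0,1}$, replacing $X$ by $\mathcal{I}_x(X)$ and $Y$ by $\mathcal{I}_x(Y)$ multiplies each mixed term by $i\cdot(-i)=1$, giving $\mathcal{L}_x(\mathcal{I}_x X,\mathcal{I}_x Y)=\mathcal{L}_x(X,Y)$. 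I should note that this argument uses the (generic) fact, recalled at the start of Section \ref{sec2}, that we may pass to a regular filtered manifold; the identity is the one already written into the definition of CR geometry with Levi--Tanaka algebra in Section \ref{uv1}.

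For (2), the point is that the quotient $\fm_x/\mathcal{K}_x$ inherits a Levi--Tanaka structure because $\mathcal{K}_x\subset T^{-1}_xM$ is an ideal of $\fm_x$ contained in $\fm_{x,-1}$ and is $\mathcal{I}_x$--invariant (indeed $\mathcal{K}_x=\fk$ is exactly the maximal ideal of $\fm$ in $\fm_{-1}$, and by the definition of Levi kernel one checks $\mathcal{I}_x(\mathcal{K}_x)\subset\mathcal{K}_x$ from $[\mathcal{K},\mathcal{K}]\subset\mathcal{K}$). Hence $\mathcal{I}_x$ descends to $\fm_{x,-1}/\mathcal{K}_x=\fg_{-1}$, the induced bracket on $\fg_-=\fm_x/\mathcal{K}_x$ still satisfies the Levi--Tanaka identity because identity (1) is stable under passing to a quotient by an $\mathcal{I}_x$--invariant ideal, and the symbol $\fg_-$ is nondegenerate by construction: $\fg_{-1}=\fm_{-1}/\fk$ contains, by maximality of $\fk$, no nonzero ideal of $\fg_-$. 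This is the quotient construction already described in Section \ref{uv2}, so (2) amounts to recording that the quotient is well defined and nondegenerate.

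For (3), I would use the displayed computation preceding the statement: for a section $A$ of $\mathcal{K}$ and a section $X$ of $T^{-1}M$ the combination $[A,fX]+\mathcal{I}[A,f\mathcal{I}X]$ is $C^\infty$--linear in $X$, so the complex antilinear part of $\mathrm{ad}(A)$ on $T^{-1}M/\mathcal{K}$ is tensorial; to see that $\mathcal{K}_x$ acts trivially on $gr_{-2}(T_xM)$, note that for $A$ a section of $\mathcal{K}=T^{1}M\cap$ (Levi kernel) and $Z$ a section of $T^{-2}M$ we have $[A,Z]\in T^{-2}M$ while $A\in T^{-1}M$, so $\mathrm{ad}(A)$ lowers the filtration degree by at most $1$ and therefore induces on $gr(T_xM)$ a grading--preserving map that is $+1$ in internal degree — i.e. the component $gr_{-2}\to gr_{-2}$ comes only from the bracket $[\mathcal{K},T^{-2}M]\subset T^{-2}M$, which at the graded level is the restriction to $\mathcal{K}\subset\fm_{-1}$ of $\mathrm{ad}$ acting on $\fm_{-2}$; since $\mathcal{K}$ is identified via $\iota_x$ with its image in $\frak{gl}(\fg_{-1})$ and that identification records only the action on $\fg_{-1}$, the extension to $\fg_{-2}=\fm_{-2}$ is forced to be the one coming from the derivation property, which I would spell out gives the zero endomorphism on $\fm_{-2}$ precisely because $[\mathcal{K},\mathcal{K}]$ already lies in $\mathcal{K}\subset\fm_{-1}$ and cannot contribute in degree $-2$. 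For (4), since $\mathcal{I}_x$ on $\mathcal{K}_x$ is defined by identifying $\mathcal{K}_x$ with endomorphisms of $\fg_{-1}$ via $\iota_x$ and transporting the complex structure of $\mathfrak{gl}(\fg_{-1})$ induced by $\mathcal{I}_x|_{\fg_{-1}}$ (which acts by postcomposition $\Phi\mapsto \mathcal{I}_x|_{\fg_{-1}}\circ\Phi$), the claimed formula is just the statement of that identification, and I would check it is well defined by noting $\iota_x$ is injective (2--nondegeneracy) and that the image is closed under this postcomposition because $\mathcal{I}_x|_{\fg_{-1}}\circ\iota_x(A)=\iota_x(\mathcal{I}_x A)$ follows from applying $\mathcal{I}_x$ to the mixed--term expression for the antilinear part of $\mathrm{ad}(A)$.

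The main obstacle I expect is assertion (3): making precise the sense in which "the action of $\mathcal{K}_x$ on $gr_{-1}$ extends trivially on $gr_{-2}$" requires being careful about what data $\iota_x$ actually records versus what the full bracket of a lift of an element of $\mathcal{K}_x$ does on all of $gr(T_xM)$, and to show the extension is the \emph{trivial} (zero) one I will need to use not just the filtration degrees but also the fact that $\mathcal{K}=\fk$ is contained in $\fm_{-1}$ together with $[\fk,\fk]\subset\fk$, so that there is no room in internal degree for a nonzero induced map $\fm_{-2}\to\fm_{-2}$. The other three parts are essentially bookkeeping around the displayed identity and the definition of the Levi--Tanaka algebra, so I would present them compactly and devote the bulk of the argument to (3).
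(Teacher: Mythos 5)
Your handling of (1) and (2) is essentially the paper's argument (type decomposition plus integrability for the Hermitian identity, and the quotient by the maximal ideal $\fk$ for nondegeneracy), but your argument for (3) has a genuine gap, precisely at the point you single out as the crux. First, the ``action of $\mathcal{K}_x$ on $gr_{-1}$'' in the lemma is the map $\iota_x(A)\in\frak{gl}(T^{-1}_xM/\mathcal{K}_x)$, i.e.\ the complex antilinear part of the bracket, not the adjoint action of a vector field extending $A$; so the inclusion $[\mathcal{K},T^{-2}M]\subset T^{-2}M$ you invoke is neither established nor relevant, and the talk of $\ad(A)$ lowering filtration degree does not bear on the claim. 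More importantly, the triviality of the induced map on $gr_{-2}(T_xM)$ cannot follow from $[\fk,\fk]\subset\fk$ or from ``no room in internal degree'': a grading--preserving extension of an arbitrary degree--zero endomorphism of $gr_{-1}$ acts on $gr_{-2}=\mathcal{L}_x(gr_{-1},gr_{-1})$ through the Leibniz rule and is in general nonzero, so if the statement were a pure degree count it would be empty. The missing idea is the antilinearity of $\iota_x(A)$: complexifying, $gr_{-2}(T_xM)\otimes\mathbb{C}=\mathcal{L}_x(T^{-1,10}M,T^{-1,01}M)$, an element $A\in\mathcal{K}_x$ interchanges $T^{-1,10}M$ and $T^{-1,01}M$, and the same integrability you used in (1) makes $\mathcal{L}_x$ vanish on pairs of vectors of the same type; hence $\mathcal{L}_x(A(X),Y)=\mathcal{L}_x(X,A(Y))=0$ for $X$ of type $(1,0)$ and $Y$ of type $(0,1)$, and the Leibniz extension to $gr_{-2}$ is the zero map. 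That is the paper's (short) proof of (3), and it is the step your proposal does not supply.

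Two smaller points. In (2), the $\mathcal{I}_x$--invariance of $\mathcal{K}_x$ does not follow from $[\mathcal{K},\mathcal{K}]\subset\mathcal{K}$; it follows from claim (1) together with the maximality defining $\fk$ (for $A\in\fk$ and $X\in\fm_{-1}$ one has $\mathcal{L}_x(\mathcal{I}_xA,X)=\mathcal{L}_x(\mathcal{I}_x^2A,\mathcal{I}_xX)=-\mathcal{L}_x(A,\mathcal{I}_xX)=0$, so $\fk+\mathcal{I}_x\fk$ is again an ideal contained in $\fm_{-1}$ and hence equals $\fk$); your maximality argument for nondegeneracy of the quotient is fine. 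In (4), the identity $\iota_x(\mathcal{I}_xA)=\mathcal{I}_x\circ\iota_x(A)$ is a statement to be proved rather than a definition: inserting $\mathcal{I}(A)$ into the bracket produces derivative terms of $\mathcal{I}$, and one must note, as the paper does, that $(X.\mathcal{I})A_x$ and $(\mathcal{I}(X).\mathcal{I})A_x$ lie in $\mathcal{K}_x$ and therefore vanish in the quotient; your phrase about ``applying $\mathcal{I}_x$ to the mixed--term expression'' glosses over exactly this step.
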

\begin{proof}
The complexification $T^{-1}M\otimes \mathbb{C}$ of $T^{-1}M$ decomposes  according to the eigenvalues of the complex structure $\mathcal{I}$ to $T^{-1,10}M\oplus T^{-1,01}$. Since the CR submanifold is induced by a real submanifold $M$ of complex space $\mathbb{C}^N$, it satisfies a (formal) integrability condition $[T^{-1,10}M,T^{-1,10}M]\subset T^{-1,10}M$. This directly implies that $\mathcal{L}_x(\mathcal{I}_x(X),\mathcal{I}_x(Y))=\mathcal{L}_x(X,Y)$ for all $X,Y\in T^{-1}_xM,$ i.e.,  $(gr(T_xM),\mathcal{L}_x,\mathcal{I}_x)$ is a Levi--Tanaka algebra.

Since elements of the Levi kernel belong to an ideal in  $(gr(T_xM),\mathcal{L}_x)$ and the complex structure preserves the Levi kernel, the second claim holds.

Further, $gr_{-2}(T_xM)\otimes \mathbb{C}=\mathcal{L}_x(T^{-1,10}M,T^{-1,01}M)$ and action of $\mathcal{K}_x$ interchanges $T^{-1,10}M$ and $T^{-1,01}M$. Therefore, $\mathcal{L}_x(A(T^{-1,10}M),T^{-1,01}M)=0$ for all $A\in \mathcal{K}_x$ and the third claim holds.

If we insert $\mathcal{I}(A)$ instead of $A$ into definition of $\iota_x$ use the fact that $(X.\mathcal{I})A_x\in \mathcal{K}_x,(\mathcal{I}(X).\mathcal{I})A_x\in \mathcal{K}_x$, then we obtain that $[\mathcal{I}(A),X]+\mathcal{I}[\mathcal{I}(A),\mathcal{I}(X)]=\mathcal{I}([A,X]+\mathcal{I}[A,\mathcal{I}(X)])$, i.e., the last claim holds.
\end{proof}

We see from Lemma \ref{LTa} that the following is the appropriate generalization of the Levi--Tanaka algebra to the setting of 2--nondegenerate CR submanifolds, which is well--defined almost everywhere.

\begin{def*}
We say that the triple $(\fm_x/\mathcal{K}_x,\mathcal{I}_x|_{T^{-1}_xM/\mathcal{K}_x},\mathcal{K}_x\subset \frak{gl}(T^{-1}_xM/\mathcal{K}_x))$ is the second--order Levi--Tanaka algebra of the CR submanifold $(M,T^{i}M,\mathcal{I})$ at $x$.

We say that the second--order Levi--Tanaka algebra is regular, if all elements of $\mathcal{K}_x$ extend to grading preserving derivations of $\fm_x/\mathcal{K}_x$ and $[[\mathcal{K}_x,\mathcal{K}_x],\mathcal{K}_x]\subset \mathcal{K}_x$ holds for the Lie bracket of these derivations.
\end{def*}

In the next Sections, we look on Levi--Tanaka algebras on homogeneous 2--nondegenerate CR submanifolds and show that we can restrict ourselves to regular second--order Levi--Tanaka algebras.

\section{Maximally symmetric 2--nondegenerate CR geometries}

\subsection{Necessary conditions}\label{sec31}

A homogeneous CR submanifold is always regular, however, a homogeneous 2--nondegenerate CR submanifold satisfies a priory just the condition (1) of the Definition \ref{def2dg} and does not have to satisfy the conditions (2) and (3),i .e., does not have to be regularly 2--nondegenerate.

Suppose $G$ is a Lie group of CR automorphisms of a 2--nondegenerate CR geometry on a regular filtered manifold $(M,T^{i}M)$ with Levi-Tanaka algebra $(\fm,I)$ and second--order Levi--Tanaka algebra $(\fg_-,I,\fk)$ acting effectively and transitively on $M$. If we identify $T_xM$ with the vector space $\fg/\fh$, where $\fh$ is the Lie algebra of stabilizer $H$ of $x$, then there is a filtration $\fg^i$ of $\fg$ such that $gr_i(T_xM)=(\fg^i/\fh)/(\fg^{i+1}/\fh)$ holds.

\begin{lem*}\label{autgrad}
Let $\fg^0$ be the preimage of $\fk$ in $\fg$ and define $\fg^j$ to be as the set of all elements $X\in \fh$ such that $[X,\fg^{-1}]\subset \fg^{j-1}$. Then
\begin{enumerate}
\item $[\fg^j,\fg^k]\subset \fg^{j+k}$ for all $j,k$,
\item $\fg^\ell=0$ for $\ell$ large enough,
\item there is associated graded Lie algebra $gr(\fg)$ with graded Lie subalgebra $gr(\fh)$,
\item there is a homogeneous 2--nondegenerate CR geometry with second--order Levi--Tanaka algebra $(\fg_-,I,\fk)$ with Lie algebra of infinitesimal CR automorphism containing $gr(\fg)$ and the grading element of $gr(\fg)$.
\end{enumerate}
\end{lem*}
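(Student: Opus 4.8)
The plan is to establish Lemma \ref{autgrad} by a standard filtration-to-grading argument adapted to the 2-nondegenerate setting, where the novelty is that the ``bottom'' of the filtration is shifted: $\fg^{0}$ is not $\fh$ but the preimage of the Levi kernel $\fk$, so that $\fg^{0}/\fh\cong\fk$ sits inside $\fg$ rather than inside the isotropy subalgebra. First I would verify compatibility of the bracket with the filtration, i.e. claim (1). The inclusions $[\fg^{-i},\fg^{-j}]\subset\fg^{-i-j}$ for $i,j\ge 1$ are immediate from the fact that $(M,T^{i}M)$ is a filtered manifold and $\fg$ acts by CR automorphisms, hence preserves $T^{i}M$. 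For the mixed and non-negative parts I would use Lemma \ref{LTa}: the key structural inputs are that $\fk$ acts on $gr_{-1}(T_xM)$ and extends trivially on $gr_{-2}(T_xM)$ (part (3) of Lemma \ref{LTa}) together with the bracket relations $[\mathcal K,\mathcal K]\subset\mathcal K$ and $[\mathcal K,T^{-1}M]\subset T^{-1}M$ coming from the definition of the Levi kernel; these give $[\fg^{0},\fg^{-1}]\subset\fg^{-1}$, $[\fg^{0},\fg^{-2}]\subset\fg^{-2}$, and $[\fg^{0},\fg^{0}]\subset\fg^{0}$ (the last since iterated brackets of $\fk$ with itself still land in the relevant space, using that we only need the filtration inclusion, not yet the derivation property). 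The relations $[\fg^{j},\fg^{-1}]\subset\fg^{j-1}$ for $j\ge 1$ hold by the very definition of $\fg^{j}$, and then $[\fg^{j},\fg^{k}]\subset\fg^{j+k}$ for general $j,k$ follows by induction on $-k$ using the Jacobi identity and the fact that $\fg^{-1}$ generates $\fg_-$ (the symbol is bracket-generating).

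Second, for claim (2) I would argue finite-dimensionality of the ``positive'' part of the filtration, i.e. $\fg^{\ell}=0$ for $\ell\gg 0$. Here the mechanism is the usual one behind Tanaka-type prolongation bounds: if $X\in\fg^{\ell}$ with $\ell\ge 1$, then $\ad(X)$ maps $\fg^{-1}$ into $\fg^{\ell-1}\subset\fh$, and iterating, $\ad(X)^{k}$ kills $\fg_-$ for $k$ large; combined with effectiveness of the $G$-action on $M$ (so that no nonzero element of $\fh$ acts trivially on all of $gr(T_xM)$, equivalently the filtration on $\fg$ is ``effective'') this forces $X=0$ once $\ell$ exceeds the number of filtration steps plus the prolongation degree of $\fg_-\oplus\fg_{0}$. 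I would phrase this as: the associated graded $gr(\fg)$ injects into the full Tanaka prolongation of its own non-positive part $gr_{\le 0}(\fg)=\fg_-\oplus\fk$, which is finite-dimensional by Tanaka's theorem \cite{Ta70} applied to the (not necessarily nondegenerate, but the argument still localizes degree by degree) symbol together with the degree-zero piece; effectiveness gives the injectivity.

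Third, claim (3) is then essentially formal: having (1) and (2), the associated graded $gr(\fg):=\bigoplus_j\fg^{j}/\fg^{j+1}$ inherits a well-defined graded Lie bracket from (1), $gr(\fh)=\bigoplus_{j\ge 1}\fg^{j}/\fg^{j+1}\oplus (\fh\cap\fg^{0})/\fg^{1}$ (being careful that $\fh$ corresponds to the part of the filtration from degree $0$ upward intersected appropriately, since $\fh\subset\fg^{0}$ but $\fh\ne\fg^{0}$) is a graded subalgebra, and the grading element $E$ acting on $\fg^{j}/\fg^{j+1}$ by multiplication by $j$ is a derivation of $gr(\fg)$; I would just spell out these identifications. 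Finally, for claim (4), the homogeneous model is $\tilde G/\tilde H$ where $\tilde G$ is a Lie group with Lie algebra $gr(\fg)\oplus\mathbb R E$ (semidirect sum) and $\tilde H$ has Lie algebra $gr(\fh)\oplus\mathbb R E$; the left-invariant filtration and the CR data (complex structure $I$ on $\fg_{-1}$, the subspace $\fk$) descend to this homogeneous space exactly as in the construction recalled in Section \ref{uv1}, and by construction its second-order Levi--Tanaka algebra is $(\fg_-,I,\fk)$ while $gr(\fg)$ (and the grading element) sit inside its infinitesimal CR automorphisms. The graded Jacobi identity for $gr(\fg)$ is inherited from $\fg$, so there is no integrability obstruction.

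The main obstacle I anticipate is claim (2): controlling the non-negative filtration degrees without the comfort of nondegeneracy. In the classical Tanaka setting the bound $\fg^{\ell}=0$ uses that $\fg_-$ is fundamental and nondegenerate; here $\fg_-$ is nondegenerate (it is the nondegenerate part of the Levi--Tanaka algebra), but $\fk$ lives in $\fg^{0}$ alongside $\fh\cap\fg^{0}$ and one must check that an element of $\fg^{\ell}$, $\ell\ge 1$, cannot ``hide'' by acting trivially through the $\fk$-direction. The cleanest route is to reduce to a genuine Tanaka prolongation statement for the pair $(\fg_-,\,\fk\oplus(\text{image of }\fh\cap\fg^{0}\text{ in }\frak{der}_0(\fg_-)))$ — but at this stage of the paper we do not yet know $\fk$ consists of derivations, so I would instead argue directly with effectiveness: the map $\fg^{j}/\fg^{j+1}\to \bigoplus_{i<0}\fg_i^{*}\otimes\fg_{i+j}$ is injective for $j\ge 1$ by definition of $\fg^{j}$ and bracket-generation, and its image is constrained by the Jacobi identity to lie in the (abstract, algebraic) prolongation space, which vanishes in high degree for dimension reasons that are insensitive to whether the degree-zero part acts by derivations. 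Making this last point precise — that the prolongation space is finite-dimensional purely from $\fg_-$ being a fixed finite-dimensional nilpotent Lie algebra generated in degree $-1$, regardless of the degree-$0$ input — is the technical heart, and I would cite the relevant finiteness statement from \cite{Ta70} (or \cite{parabook}).
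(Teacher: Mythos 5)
Your handling of claims (1), (3) and (4) is essentially the paper's own argument (definitions of filtered manifold and Levi kernel plus the Jacobi identity for (1), formal passage to the associated graded for (3), and a homogeneous space built from the graded data with the grading element acting as an infinitesimal CR automorphism for (4)), so the issue is concentrated in claim (2), where your proposal has a genuine gap. First, your parenthetical reading of effectivity is false: effectivity of the $G$--action says only that $\fh$ contains no nonzero ideal of $\fg$; it does not say that no nonzero element of $\fh$ acts trivially on $gr(T_xM)$. Indeed every element of $\fg^{2}$ acts trivially on $gr(T_xM)$, and in the homogeneous models $\fg^{2}\supset\fg_2\neq 0$, so the statement you attribute to effectivity would already fail there. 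Second, the prolongation route proves the wrong thing: embedding $\fg^{\ell}/\fg^{\ell+1}$ (for $\ell\geq 1$) into Tanaka prolongation spaces and invoking finiteness (which, by the way, is not ``insensitive to the degree-zero input'' --- it is exactly nondegeneracy of $\fg_-$ that Tanaka's finiteness theorem requires, although that hypothesis does hold here) only shows that the graded pieces vanish for large $\ell$, i.e. that the decreasing filtration stabilizes. But stabilization is automatic from $\dim\fg<\infty$; the actual content of (2) is that the stable subspace $\bigcap_{\ell}\fg^{\ell}$ is zero, and nothing in your argument addresses this --- a nonzero stable subspace leaves no trace in $gr(\fg)$, so no contradiction with prolongation finiteness can arise. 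The preliminary mechanism ``$\ad(X)^{k}$ kills $\fg_-$ for $k$ large'' is likewise circular: iterated brackets only climb the filtration, which helps only if one already knows the filtration terminates.

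The missing step is precisely the paper's one-line argument, and it is elementary: since $\dim\fg<\infty$, there is $\ell_0\geq 1$ with $\fg^{\ell}=\fg^{\ell_0}$ for all $\ell\geq\ell_0$; writing $\fg=\fg^{-\mu}$ for the depth $\mu$ of the filtration and using claim (1), $[\fg^{\ell_0},\fg]=[\fg^{\ell_0+\mu},\fg^{-\mu}]\subset\fg^{\ell_0}$, so the stable subspace $\fg^{\ell_0}$ is an ideal of $\fg$ contained in $\fh$, hence zero by effectivity of the action. No prolongation theory is needed for (2); effectivity enters through this ideal argument, not through any faithfulness of the isotropy on the associated graded.
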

\begin{proof}
If $i,j<0$, then Claim (1) holds by definition of the filtered manifold. If $i<0$ and $j= 0$, then Claim (1) follows from the definition of the Levi kernel. The remaining cases of Claim (1) follow directly from the Jacobi identity. 

The Claim (2) is an obvious consequence of Claim (1) and the effectivity of the action of $G$.

The Claim (3) is natural consequence of Claim (1). Moreover, the grading element of $gr(\fg)$ is a derivation annihilating $gr_0(\fg)$. Moreover, $gr_0(\fh)$ is intersection of fixed point set of conjugation by $I$ in $\frak{gl}(gr_{-1}(\fg))$ with $gr_0(\fg)$, thus there is a homogeneous spaces $\exp(\fg_-)\times \exp(gr_0(\fg))/\exp(gr_0(\fh))$ with an invariant 2--nondegenerate CR geometry, where the grading element is an infinitesimal CR automorphism. This proves the Claim (4).
\end{proof}

This Lemma immediately implies the necessity of regular 2--nondegeneracy of the second--order Levi--Tanaka algebra in the (semi)simple case and one of the implications in the Theorem \ref{main}.

\begin{prop*}\label{neces}
Suppose $\fg$ is semisimple and $dim(\fg)$ is maximal among all (homogeneous) 2--nondegenerate CR geometries with second--order Levi--Tanaka algebra $(\fg_-,I,\fk)$. Then $\fh$ is a subalgebra of a parabolic subalgebra $\fp$ of $\fg$ and there is bigrading $\fg_{a,b}$ of the complexification of $\fg$ such that
\begin{enumerate}
\item $(\fg_0,\fg_{0,I})$ is a (pseudo)--Hermitian symmetric pair,
\item $\fg_a\otimes \mathbb{C}=\sum_b \fg_{a,b}$ holds for the grading $\fg_i$ of $\fg$ corresponding $P$,
\item $\fg_{0,I}\otimes \mathbb{C}=\fg_{0,0},$
\item $\fg_{0}\otimes \mathbb{C}=\fg_{0,-1}\oplus \fg_{0,0}\oplus \fg_{0,1}$,
\item $\fg_{-1}\otimes \mathbb{C}=\fg_{-1,-1}\oplus \fg_{-1,0}$,
\item $\fg_{-2}\otimes \mathbb{C}=\fg_{-2,-1}.$
\end{enumerate}
In particular, the second--order Levi--Tanaka algebra $(\fg_-,I,\fk)$ is regular.
\end{prop*}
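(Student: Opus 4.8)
The plan is to use Lemma~\ref{autgrad} together with the maximality hypothesis to linearize the filtration on $\fg$, and then to read off the bigrading from the complex structure $I$ with the help of Lemma~\ref{LTa}.

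\emph{Step 1: $\fg$ is graded and $\fh$ sits in a parabolic.} Apply Lemma~\ref{autgrad}: it produces the filtration $\fg^j$, the associated graded $gr(\fg)$ with graded subalgebra $gr(\fh)$, and a homogeneous $2$--nondegenerate CR geometry with the \emph{same} second--order Levi--Tanaka algebra $(\fg_-,I,\fk)$ whose infinitesimal CR automorphism algebra $\tilde\fg$ contains $gr(\fg)$ together with the grading element. Since $\dim(\tilde\fg)\le \dim(\fg)=\dim(gr(\fg))$ by maximality, we get $\tilde\fg=gr(\fg)$ and the grading element lies in $gr_0(\fg)$. Lift it to $\fg^0$ and replace it by its semisimple part (Jordan--Chevalley decomposition in the semisimple algebra $\fg$: the semisimple part is a polynomial in the lift, hence preserves each $\fg^j$; it induces a semisimple operator on $\fg^j/\fg^{j+1}$ whose sum with a nilpotent operator is multiplication by $j$, hence it induces multiplication by $j$ there). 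This yields $E\in\fg$ with $\ad(E)$ diagonalizable over $\R$ with integer eigenvalues and $\fg^j=\bigoplus_{i\ge j}\fg_i$ for $\fg_i$ the $i$--eigenspace; thus $\fg=\bigoplus_i\fg_i$ is a graded semisimple Lie algebra with $\fg_i=\fm_i$ for $i<-1$, $\fg_{-1}=\fm_{-1}/\fk$ and $\fk\subseteq\fg_0$. By standard structure theory $\fp:=\bigoplus_{i\ge0}\fg_i$ is parabolic with nilradical $\bigoplus_{i>0}\fg_i$, and $\fh\subseteq\fg^0=\fp$ by the definition of $\fg^0$ in Lemma~\ref{autgrad}; moreover $\fh\cap\fg_0=\fg_{0,I}$, so $\fh=\fg_{0,I}\oplus\bigoplus_{i>0}\fg_i$. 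This is the first assertion, with $P=G_0\ltimes\exp(\fp_+)$.

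\emph{Step 2: the Hermitian symmetric pair and the bigrading.} By Lemma~\ref{LTa}, $I$ restricts to a complex structure on $\fg_{-1}$ with $\mathcal L(IX,IY)=\mathcal L(X,Y)$; writing $\fg_{-1}^{1,0},\fg_{-1}^{0,1}$ for the $\pm i$--eigenspaces of $I$ in $(\fg_{-1})_\C$, this gives $[\fg_{-1}^{1,0},\fg_{-1}^{1,0}]=0$ and $(\fg_{-2})_\C=[\fg_{-1}^{1,0},\fg_{-1}^{0,1}]$; moreover $\fk$ acts on $\fg_{-1}$ by $\C$--antilinear endomorphisms and the complex structure of $\fk$ is $A\mapsto I\circ A$. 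The structural point to be established (Step 3) is that the $\C$--linear operator $\tfrac12 I$ on $\fg_{-1}$, extended by zero on $\fg_{-2}$, already lies in $\fg_0$. Granting this, $\fg_0\hookrightarrow\mathfrak{gl}(\fg_{-1})$ is $\ad(\tfrac12 I)$--invariant, and since $(\ad\tfrac12 I)^2$ is $0$ on $\C$--linear endomorphisms and $-\id$ on $\C$--antilinear ones, $\fg_0=\fg_{0,I}\oplus\fq_0$ with $\fg_{0,I}$ the $\C$--linear part (containing $E$) and $\fq_0\supseteq\fk$ the $\C$--antilinear part; one has $[\fg_{0,I},\fq_0]\subseteq\fq_0$, $[\fq_0,\fq_0]\subseteq\fg_{0,I}$, and $\tfrac12 I$ lies in the centre of $\fg_{0,I}$ and induces a complex structure on $\fq_0$. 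Since the Killing form of $\fg$ is nondegenerate on the reductive algebra $\fg_0$, $(\fg_0,\fg_{0,I})$ is a (pseudo)--Hermitian symmetric pair, which is (1). Now set $E':=\tfrac12(iI+E)\in(\fg_0)_\C$; it commutes with $E$, is semisimple, and a short eigenvalue computation gives that $\ad(E')$ has eigenvalue $-1$ on $\fg_{-1}^{1,0}$ and $0$ on $\fg_{-1}^{0,1}$, eigenvalues $-1,0,1$ on $(\fg_0)_\C$ with $0$--eigenspace $(\fg_{0,I})_\C$, and eigenvalue $-1$ on $(\fg_{-2})_\C$. Taking $\fg_{a,b}$ to be the joint $(\ad E,\ad E')$--eigenspace with eigenvalue $(a,b)$ produces a bigrading of $\fg_\C$, and these eigenvalues are exactly conditions (2)--(6).

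\emph{Step 3: $\tfrac12 I\in\fg_0$ -- the main obstacle.} By the Levi--Tanaka identity $\mathcal L(IX,Y)+\mathcal L(X,IY)=0$ of Lemma~\ref{LTa}, $\tfrac12 I$ (extended by zero on $\fg_{-2}$) is a grading--preserving derivation of $\fg_-$ preserving $I$, centralizing $\fg_{0,I}$, and carrying $\fk$ onto $\fk$ (it restricts to $A\mapsto I\circ A$, which preserves $\fk$ by Lemma~\ref{LTa}). The content is to deduce $\tfrac12 I\in\fg_0$; I would argue that if $\tfrac12 I$ were not already in $\fg_0$ one could construct a homogeneous $2$--nondegenerate CR geometry with the \emph{same} second--order Levi--Tanaka algebra $(\fg_-,I,\fk)$ but with strictly larger infinitesimal CR automorphism algebra, contradicting maximality of $\dim(\fg)$. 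I expect this to be the hard step: the naive enlargement of $\fg_0$ by $\tfrac12 I$ enlarges the Levi kernel of the resulting model and therefore changes the second--order Levi--Tanaka algebra, so the use of maximality has to be channelled through a genuinely $(\fg_-,I,\fk)$--preserving modification, and controlling the ``mixed'' ($\C$--linear plus $\C$--antilinear) elements of $\fg_0$ uses the reductivity of $\fg_0$ and the precise interplay of $I$ with the grading element $E$.

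\emph{Step 4: regularity.} With Steps 1--3 in hand, regularity of $(\fg_-,I,\fk)$ follows formally: $\fk\subseteq\fg_0$ acts on $\fg_-$ by grading--preserving derivations, so $\fk\subseteq\mathfrak{der}_0(\fg_-)$; since $\fk\subseteq\fq_0$ consists of $\C$--antilinear endomorphisms, $[\fk,\fk]\subseteq\fg_{0,I}$; and since $\fk$ is $\fg_{0,I}$--stable (the group $G_{0,I}\subseteq H$ acts on the canonically defined objects $\mathcal K_x$ and $\mathfrak{gl}(\fg_{-1})$ and $\iota_x$ is equivariant), $[\fg_{0,I},\fk]\subseteq\fk$; hence $[[\fk,\fk],\fk]\subseteq[\fg_{0,I},\fk]\subseteq\fk$. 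These are conditions (2) and (3) of Definition~\ref{def2dg}, so $(\fg_-,I,\fk)$ is regular.
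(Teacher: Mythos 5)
Your Step 1 (Lemma~\ref{autgrad} plus maximality plus Jordan--Chevalley to get $E\in\fg$, the grading, and $\fh\subseteq\fp$) is essentially the paper's first paragraph, and Step 4 is routine once the rest is in place. The problem is that your entire construction of the bigrading in Step 2 is conditioned on Step 3 --- that the derivation acting as $\tfrac12 I$ on $\fg_{-1}$ and by $0$ on $\fg_{-2}$ is realized by an element of $\fg_0$ --- and Step 3 is not proved. You sketch a maximality argument and then yourself identify why it fails: adjoining $\tfrac12 I$ to $\fg_0$ produces a model whose Levi kernel, hence whose second--order Levi--Tanaka algebra, differs from $(\fg_-,I,\fk)$, so maximality of $\dim(\fg)$ within the fixed class yields no contradiction; no repair is offered. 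This is exactly the hard point of the proposition, so the proposal is incomplete there. (Note also that $\tfrac12 I\in\fg_0$ is, in the end, a \emph{consequence} of the structure --- the complex structure of the resulting Hermitian symmetric pair is inner --- not something available as an input; and some auxiliary assertions in Step 2, e.g.\ $\fh\cap\fg_0=\fg_{0,I}$, faithfulness of the $\fg_0$--action on $\fg_-$, and the jump from nondegeneracy of the Killing form to ``(pseudo)--Hermitian symmetric pair'', are stated without argument, the last being handled in the paper by Bertram's criterion for twisted complex structures.)

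The paper circumvents your Step 3 entirely by working with the complexification, which is all the statement requires. Using reductivity of $\fg_0\otimes\C$ and $(\fg_0\cap\fh)\otimes\C$ and the fact that the $\pm i$--eigenspaces of $I$ in $\fk\otimes\C$ can be represented by invariant complements consisting of nilpotent elements, it concludes that $(\fg_0\cap\fh)\otimes\C$ contains a Cartan subalgebra of $\fg\otimes\C$ (and the grading element). Then $I$ acts by $\pm i$ on the relevant root spaces, elements of $\fk$ swap the eigenspaces in $\fg_{-1}\otimes\C$ while brackets of elements of $\fk$ preserve them, and one defines a functional on the roots occurring in $(\fg_0\oplus\fg_{-1})\otimes\C$ (values $\pm1$ on the eigenspaces in $\fg_{-1}$, $\pm2$ on those in $\fk$, $0$ on $(\fg_0\cap\fh)\otimes\C$), checks compatibility with the bracket, and realizes it by an element $\tilde E$ of the Cartan because all negative simple root spaces lie in $(\fg_0\oplus\fg_{-1})\otimes\C$. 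The bigrading is then given by $E_1$ and $E_2=\tfrac12(E_1+\tilde E)$, and claim (1) follows from $[I(X),I(Y)]=[X,Y]$ on $\fk$ via \cite[Proposition V.2.2]{Be00}. If you want to salvage your route, you must supply an argument of this type (a Cartan subalgebra inside $(\fg_0\cap\fh)\otimes\C$ and an element of it realizing the eigenvalues dictated by $I$), not an appeal to maximality.
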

\begin{proof}
A consequence of Claim (4) of Lemma \ref{autgrad} is that if the grading element $E_1$ is not element of $gr(\fg)$, then $dim(\fg)$ is not maximal among all CR manifold with the given second--order Levi--Tanaka algebra. Clearly, the semisimple part of the adjoint action of the grading element $E_1\in gr(\fg)$ is a grading element and if $\fg$ is semisimple, then $E_1\in \fg$ and $\fg=gr(\fg)$. Thus there is a corresponding parabolic subalgebra $\fp$ of $\fg$ containing $\fh$.

Now, we consider the complexified situation. Both Lie algebras $(\fg_{0}\cap \fh)\otimes \mathbb{C}$ and $\fg_{0}\otimes \mathbb{C}$ are reductive. Moreover, we can choose $(\fg_{0}\cap \fh)\otimes \mathbb{C}$--invariant complements of $(\fg_{0}\cap \fh)\otimes \mathbb{C}$ in $\fg_{0}\otimes \mathbb{C}$ consisting of nilpotent elements to represent the $\pm i$--eigenspaces of $I$ in the complexification of $\fk$. Therefore, $(\fg_{0}\cap \fh)\otimes \mathbb{C}$ contains a Cartan subalgebra of $\fg$ (and the grading element) and $I$ acts by eigenvalues $\pm i$ on the corresponding root spaces. Since bracket of root spaces is a root space, the action of elements of $\fk$ swap $\pm i$--eigenspaces of $I$ in $\fg_{-1}\otimes \mathbb{C}$ and the bracket (in $\fg$) of elements $\fk$ preserves $\pm i$--eigenspaces of $I$ in $\fg_{-1}\otimes \mathbb{C}$. Therefore, the functional on the set of roots in $(\fg_{0}\oplus \fg_{-1})\otimes \mathbb{C}$ defined 
\begin{itemize}
\item as $\pm 1$ on roots in $\mp i$--eigenspaces of $\mathcal{I}_x$ in $\fg_{-1}\otimes \mathbb{C}$,
\item as $\pm 2$ on roots in $\mp i$--eigenspaces of $\mathcal{I}_x$ in $\fk\otimes \mathbb{C}$,
\item as $0$ on roots in $(\fg_{0}\cap \fh)\otimes \mathbb{C}$.
\end{itemize}
is compatible with the Lie bracket $(\fg_{0}\otimes \mathbb{C})\otimes ((\fg_{0}\oplus \fg_{-1})\otimes \mathbb{C})\to (\fg_{0}\oplus \fg_{-1})\otimes \mathbb{C}$. Since $(\fg_{0}\oplus \fg_{-1})\otimes \mathbb{C}$ contains all simple negative roots, there is element $\tilde E$ in the Cartan subalgebra of $\fg$ that realizes this functional.

Let us consider bigrading of $\fg$ given by grading elements $E_1$ and $E_2=\frac12(E_1+\tilde E)$. It is simple computation to show that this bigrading has the claimed properties (2)--(6). Finally, $[I(X),I(Y)]=\frac14([iX^{10},-iY^{01}]+[-iX^{01},iY^{10}])=\frac14([X^{10},Y^{01}]+[X^{01},Y^{10}])=[X,Y]\in \fg_0$ holds for the decompositions of $X=X^{10}+X^{01},Y=Y^{10}+Y^{01}\in \fk\subset \fg_{0,-1}\oplus \fg_{0,1}$. We see that $(\fg_0,\fg_{0,I}=(\fg_0\cap \fh))$ is a symmetric pair with complex structure $I$ that is twisted in the terminology of \cite{Be00} and \cite[Proposition V.2.2]{Be00} ensures that it is a (pseudo)--Hermitian symmetric pair. Therefore, the second--order Levi--Tanaka algebra  $(\fm/\fk,I|_{\fm_{-1}/\fk},\fk)$ is regular.
\end{proof}

We can observe that the necessary conditions (1)--(6) from Proposition \ref{neces} also hold in the case of semisimple maximally symmetric homogeneous 2--degenerate CR geometries, because the kernel of the map $\iota_x: \mathcal{K}_x\to (T^{-1}_xM/\mathcal{K}_x)$ is contained in $\fg^1\subset \fp$ and plays no role in the construction of the bigrading. Therefore, we can use the results of Freeman \cite{Fr74,Fr77} to show that there can not be semisimple maximally symmetric homogeneous CR geometries that are nondegenerate of order $\ell\geq 3$.

\begin{thm*}\label{neces3}
There is no semisimple maximally symmetric homogeneous CR geometry that is nondegenerate of order $\ell\geq 3$ .
\end{thm*}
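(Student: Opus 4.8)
The plan is to argue by contradiction. Suppose $G/H$ is a semisimple maximally symmetric homogeneous CR geometry that is nondegenerate of order $\ell\geq 3$. Being homogeneous it is uniformly $\ell$--nondegenerate, so at the base point $x$ the Freeman tower of Levi kernels
$$\mathcal K_x=\mathcal K^{(1)}_x\supsetneq\mathcal K^{(2)}_x\supsetneq\dots\supsetneq\mathcal K^{(\ell)}_x=0$$
is strictly decreasing with $\ell$ nonzero terms and $\mathcal K^{(2)}_x=\ker\iota_x$. First I would put the geometry into the graded, bigraded form exactly as in the proof of Proposition \ref{neces}: Lemma \ref{autgrad} together with the semisimplicity of $\fg$ forces $\fg=gr(\fg)$ to be $|k|$--graded with $\fh\subseteq\fp=\bigoplus_{i\geq 0}\fg_i$, and, as the remark preceding the theorem records, the construction of the bigrading in Proposition \ref{neces} never uses $\ker\iota_x\subset\fg^1$ but only the image $\fk$, so we still obtain a bigrading $\fg_{a,b}$ of $\fg\otimes\mathbb{C}$ satisfying conditions (1)--(6). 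In particular $(\fg_0,\fg_{0,I})$ is a (pseudo)--Hermitian symmetric pair, the $b$--degree takes only the values $0,-1$ on $\fg_{-1}\otimes\mathbb{C}$, only $-1$ on $\fg_{-2}\otimes\mathbb{C}$, and dually only $0,1$ on $\fg_{1}\otimes\mathbb{C}$ and only $1$ on $\fg_{2}\otimes\mathbb{C}$.

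It then remains to derive a contradiction from the coexistence of a length--$\ell$ Freeman tower with $\ell\geq 3$ and this narrow bigrading, by translating Freeman's higher Levi forms \cite{Fr74,Fr77} into iterated brackets in the graded algebra. The two summands $\fg_{-1,0},\fg_{-1,-1}$ of $\fg_{-1}\otimes\mathbb{C}$ are the $\mp i$--eigenspaces of $I$, the Levi bracket is the off--diagonal pairing into $\fg_{-2,-1}$, and, since $\fk$ acts trivially on $\fg_{-2}$ and $\fk\otimes\mathbb{C}=\fg_{0,-1}\oplus\fg_{0,1}$, the first Freeman form $\iota_x$ is implemented by $\ad$ of elements of $\fg_{0,\pm1}$, which interchange $\fg_{-1,0}$ and $\fg_{-1,-1}$ and annihilate, respectively, $\fg_{-1,-1}$ and $\fg_{-1,0}$ (there is neither a $\fg_{-1,1}$ nor a $\fg_{-1,-2}$). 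Consequently $\mathcal K^{(2)}_x\otimes\mathbb{C}$ is a $\fg_{0,0}$--submodule of $\fg_{-1,-1}\oplus\fg_{-1,0}$ on which these brackets already vanish, and the $k$--th Levi form for $k\geq 3$ is obtained by one further bracket — with $\fg_1$ or $\fg_2$ — landing back in $\fg_{-1}\otimes\mathbb{C}$ or $\fg_{-2}\otimes\mathbb{C}$. Tracking the $b$--degree, the only components into which such an iterated bracket could a priori be nonzero and which are not already accounted for by $\mathcal K^{(2)}_x$ are $\fg_{-1,1}$, $\fg_{-1,-2}$, $\fg_{-2,0}$ and $\fg_{-2,-2}$, and all four vanish by (5)--(6). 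Hence the $k$--th Levi form vanishes identically on $\mathcal K^{(2)}_x$, i.e.\ $\mathcal K^{(3)}_x=\mathcal K^{(2)}_x$, and inductively $\mathcal K^{(k)}_x=\mathcal K^{(2)}_x$ for every $k\geq 2$. Since $\mathcal K^{(2)}_x=\ker\iota_x\neq 0$ (otherwise the geometry would be $2$--nondegenerate), the tower never reaches $0$, contradicting $\mathcal K^{(\ell)}_x=0$; this proves the theorem.

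The substantive step is the second one: I must match Freeman's iterated Levi forms precisely with iterated brackets in $\fg\otimes\mathbb{C}$, being careful about the interchange of the two $I$--eigenspaces and about the terms feeding into $\fg_{-2}$, so that the vanishing of $\fg_{-1,1},\fg_{-1,-2},\fg_{-2,0},\fg_{-2,-2}$ genuinely kills every Levi form beyond $\iota_x$; here the Hermitian symmetric structure from (1) should be used to control the $\fg_{0,0}$--module decomposition of $\fg_{-1}\otimes\mathbb{C}$ and to rule out any surviving off--diagonal contribution. By contrast, the reduction to the graded and bigraded model in the first step is immediate from Lemma \ref{autgrad}, Proposition \ref{neces}, and the preceding remark, and once the dictionary between Freeman's tower and the bigrading is fixed the remainder is pure bookkeeping of bidegrees.
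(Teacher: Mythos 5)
Your overall strategy---pass to the graded and bigraded model via Lemma \ref{autgrad}, Proposition \ref{neces} and the remark preceding the theorem, then kill the Levi forms of order $\geq 3$ by bidegree bookkeeping---is in spirit the same as the paper's, but the decisive step is carried out with a wrong identification and, as written, does not go through. The error is the placement of $\mathcal{K}^{(2)}_x=\ker\iota_x$: it is \emph{not} a submodule of $\fg_{-1}\otimes\mathbb{C}=\fg_{-1,-1}\oplus\fg_{-1,0}$. In the graded model coming from Lemma \ref{autgrad}, $\fg_{-1}$ is the quotient $\fm_{-1}/\fk$ and contains no Levi--kernel directions at all; the image of the Levi kernel sits in $\fg_{0,-1}\oplus\fg_{0,1}$ (as you yourself note), and the kernel of $\iota_x$ is pushed into filtration degree $\geq 1$---this is precisely what the remark before the theorem records ($\ker\iota_x\subset\fg^1\subset\fp$), so $\mathcal{K}^{(2)}_x$ contributes to the part of the grading component $\fg_1$ lying in $\fg/\fh$. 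Your claim that $\mathcal{K}^{(2)}_x\otimes\mathbb{C}\subset\fg_{-1,-1}\oplus\fg_{-1,0}$ is therefore inconsistent with the framework you invoke, and your description of the $k$-th Levi form as ``one further bracket with $\fg_1$ or $\fg_2$ landing back in $\fg_{-1}\otimes\mathbb{C}$ or $\fg_{-2}\otimes\mathbb{C}$'' is not Freeman's form either: the third--order form is the complex antilinear part of the bracket of elements of $\ker\iota_x$ (grading degree $+1$) with $\fg_{-1}$, taking values at the $\fg_0$--level. Consequently the four spaces you list, $\fg_{-1,1},\fg_{-1,-2},\fg_{-2,0},\fg_{-2,-2}$, are not the relevant obstruction components, and their vanishing does not by itself show that the Freeman tower stabilizes.

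Once the placement is corrected, the bookkeeping collapses to the paper's short argument: by duality with condition (5) one has $\fg_1\otimes\mathbb{C}=\fg_{1,0}\oplus\fg_{1,1}$, so an element of $\ker\iota_x\subset\fg_1$ contributing nontrivially to the third--order Levi form would have to act complex antilinearly on $\fg_{-1}$, i.e.\ map $\fg_{-1,0}$ into $\fg_{0,-1}$, hence have a component of bidegree $(1,-1)$; equivalently $\fg_{-1,1}\neq 0$, contradicting $\fg_{-1}\otimes\mathbb{C}=\fg_{-1,-1}\oplus\fg_{-1,0}$. The single bidegree to exclude is thus $\fg_{1,-1}$ (dually $\fg_{-1,1}$), applied to brackets of $\ker\iota_x$ with $\fg_{-1}$. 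Your ``the tower stabilizes at $\mathcal{K}^{(2)}_x\neq 0$'' phrasing is an acceptable contrapositive of this, but the proof as submitted has a genuine gap at exactly the step you flag as substantive: the dictionary between Freeman's tower and the bigrading is set up incorrectly, so the degree count that is supposed to do the work is performed on the wrong spaces.
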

\begin{proof}
The result of Freeman \cite{Fr74,Fr77} is that in order to have finitely dimensional CR automorphism group, we need to assume the CR geometry is finitely nondegenerate of some order $\ell$. If $\ell\geq 3$, the part of the grading component $\fg_1$ in $\fg/\fh$ that is in kernel of $\iota_x$ has to map $\fg_{-1,0}$ into $\fg_{0,-1}$ after complexification, i.e., there is $\fg_{1,-1}\subset \fg_1\otimes \mathbb{C}$ and by duality $\fg_{-1,1}\subset \fg_{-1}\otimes \mathbb{C}$, which is contradiction with the necessary condition $\fg_{-1}\otimes \mathbb{C}=\fg_{-1,-1}\oplus \fg_{-1,0}$
\end{proof}

 \subsection{Classification}\label{sec33}
 
We classify all bigradings of (complex) simple Lie algebras satisfying the conditions of Proposition \ref{neces} and all real forms that provide (after proving Theorem \ref{main}) the maximally symmetric models of 2--nondegenerate CR manifolds. Firstly, let us check that such bigradings define homogeneous models of 2--nondegenerate CR geometries. In Section \ref{sec34}, we show that these CR geometries can be (locally) embed into complex space.

\begin{lem*}\label{absmod}
Let $\fg$ be a real form of a bigraded semisimple complex Lie algebra $\fg_{\mathbb{C}}=\oplus_{a,b}\fg_{a,b}$ with grading elements $E_1,E_2$ . Suppose that the grading element $E_1$ preserves $\fg$ and denote $\fg_i$ the corresponding grading. If $\fg_0\otimes \mathbb{C}=\fg_{0,-1}\oplus \fg_{0,0}\oplus \fg_{0,1}$, $\fg_{-1}\otimes \mathbb{C}= \fg_{-1,0}\oplus  \fg_{-1,-1}, \fg_{-2}\otimes \mathbb{C}=\fg_{-2,-1}$ and $(\fg_0,\fg_{0,I}=\fg_0\cap \fg_{0,0})$ is a (pseudo)--Hermitian pair, then the homogeneous spaces $G/H$ and $(\exp(\fg_-)\rtimes G_0)/G_{0,I}$ carry an invariant  2--nondegenerate CR geometry with second--order Levi--Tanaka algebra $(\fg_-,I,\fk=\fg_0\cap(\fg_{0,-1}\oplus \fg_{0,1}))$, where $I=iad(E_1-2E_2)|_{\fg_{-1}}$.
\end{lem*}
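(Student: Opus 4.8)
The plan is to verify directly that the data $(\fg_-, I, \fk)$ with $I = i\,\ad(E_1 - 2E_2)|_{\fg_{-1}}$ satisfies the axioms of a (regular) second--order Levi--Tanaka algebra, and then to equip $G/H$ (equivalently $(\exp(\fg_-)\rtimes G_0)/G_{0,I}$) with the induced invariant CR geometry and check 2--nondegeneracy. First I would unwind the bigrading conditions: from $\fg_{-1}\otimes\mathbb{C} = \fg_{-1,-1}\oplus\fg_{-1,0}$ the operator $\ad(E_1 - 2E_2)$ acts as $-1-2(-1) = 1$ on $\fg_{-1,0}$ and as $-1-2(0) = -1$ on $\fg_{-1,-1}$, so $i\,\ad(E_1-2E_2)$ has eigenvalues $\pm i$ and defines a complex structure on $\fg_{-1}$; I must check it descends to the real form $\fg_{-1}$, which follows because $E_1$ preserves $\fg$ and the real structure swaps $\fg_{a,b}\leftrightarrow\fg_{a,-b}$ (as $\bar\Sigma$--type symmetry), hence swaps the $\pm i$--eigenspaces, so $I$ is a genuine real endomorphism with $I^2 = -\id$. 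The key bracket identity $[I X, I Y] = [X,Y]$ for $X,Y\in\fg_{-1}$ is the same computation as at the end of the proof of Proposition~\ref{neces}: writing $X = X^{10}+X^{01}$, $Y = Y^{10}+Y^{01}$ in $\fg_{-1,0}\oplus\fg_{-1,-1}$, the cross terms $[X^{10},Y^{10}]$ and $[X^{01},Y^{01}]$ land in $\fg_{-2,0}$ and $\fg_{-2,-2}$ respectively, which vanish since $\fg_{-2}\otimes\mathbb{C} = \fg_{-2,-1}$, so only the mixed terms survive and the signs work out.

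Next I would identify $\fk$. By hypothesis $\fk = \fg_0\cap(\fg_{0,-1}\oplus\fg_{0,1})$; the natural map $\iota$ sends $A\in\fk$ to the complex antilinear part of $\ad(A)|_{\fg_{-1}}$ modulo $\fk$ — but here $\fk\subset\fg_0$ acts on the graded object so there is nothing to quotient, and I must check that $\ad(A)$ for $A\in\fg_{0,\pm1}$ indeed swaps $\fg_{-1,0}$ and $\fg_{-1,-1}$ (it raises/lowers the $b$--degree by $1$), i.e. $\ad(A)|_{\fg_{-1}}$ is complex antilinear, so $\iota$ is just the inclusion and is injective. Condition (1) of Definition~\ref{def2dg} (the image being independent of the point) is automatic for a homogeneous model. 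For conditions (2) and (3): elements of $\fk\subset\fg_0$ are automatically grading--preserving derivations of $\fg_-$ (they lie in $\fg_0 = \fg_0$, which acts on $\fg_-$ by grading--preserving derivations), and by Lemma~\ref{LTa}(3) combined with $\fg_{-2}\otimes\mathbb{C}=\fg_{-2,-1}$ they act trivially on $\fg_{-2}$; finally $[[\fk,\fk],\fk]\subset\fk$ follows from the bigrading bookkeeping together with the fact that $(\fg_0,\fg_{0,I})$ is a (pseudo)--Hermitian symmetric pair, so $[\fg_{0,1},\fg_{0,-1}]\subset\fg_{0,0}$ and $[\fg_{0,\pm1},\fg_{0,0}]\subset\fg_{0,\pm1}$, hence $[[\fk,\fk],\fk]\subset\fg_0\cap(\fg_{0,-1}\oplus\fg_{0,1}) = \fk$. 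Here I would also invoke \cite[Proposition V.2.2]{Be00} exactly as in Proposition~\ref{neces} to guarantee the symmetric--pair structure is genuinely (pseudo)--Hermitian and the twisted complex structure $I$ is integrable on $\fg_0/\fg_{0,I}$, which is needed for the CR geometry on $G/H$ to be well defined.

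Then I would assemble the invariant CR geometry on the homogeneous space. The filtration $\fg^i$ (preimages of $\fg_i\oplus\cdots$) is $G$--invariant with $gr_i(T_xM) \cong \fg_i$ for $i<0$, making $G/H$ a regular filtered manifold with symbol $\fm = \fg_- \oplus \fk$ where $\fk$ sits in degree $-1$ — more precisely $\fm_{-1} = \fg_{-1}\oplus\fk$ and $\fm_i = \fg_i$ for $i\leq -2$, with $\fk$ the maximal ideal contained in $\fm_{-1}$ (one checks $[\fk,\fg_{-1}]\subset\fg_{-1}$, $[\fk,\fg_{-2}]=0$, $[\fk,\fk]\subset\fg_0\cap\fg_{0,0}\subset\fh$ so $\fk$ is an abelian ideal modulo higher terms). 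The complex structure on $T^{-1}M$ is transported from $I$ on $\fg_{-1}$ together with its action on $\fk$ via composition with $I|_{\fg_{-1}}$ as in Lemma~\ref{LTa}(4); $G_{0,I}$--invariance of $I$ is exactly condition $\fg_{0,I}\otimes\mathbb{C}=\fg_{0,0}$ ($\fg_{0,I}$ commutes with $\ad(E_1-2E_2)$). Finally, 2--nondegeneracy of the resulting CR geometry is precisely injectivity of $\iota_x$, which we established. I expect the main obstacle to be the careful verification that $I$ and the whole CR structure descend to the chosen \emph{real form} $\fg$ (not just $\fg_\mathbb{C}$) and that $H$, which may be a proper subgroup of $P$, is exactly the stabilizer realizing $P/H = G_0/G_{0,I}$ as the (pseudo)--Hermitian symmetric space — i.e. keeping track of which real structure is compatible with the bigrading, since $E_2$ need not preserve $\fg$ while $E_1$ does. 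This bookkeeping, rather than any single computation, is where the argument needs the most care; the bracket identities themselves are routine consequences of the vanishing $\fg_{-2,b} = 0$ for $b\neq -1$.
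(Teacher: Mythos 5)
Your verification covers the algebra of $(\fg_-,I,\fk)$ itself, but it misses the step that actually carries the content of the lemma for the model $G/H$: invariance of the CR structure under the \emph{full} isotropy group $H=G_{0,I}\rtimes \exp(\fp_+)$, not just under $G_{0,I}$. On $G/H$ the relevant distribution at the origin is $(\fg_{-1}\oplus\fk\oplus\fh)/\fh$, and for $Z\in\fg_1\subset\fp_+$ the isotropy action sends $X\in\fg_{-1}$ to $X+[Z,X]+\dots$, whose $\fg_0$--component has a part in $\fk$ that does \emph{not} die in the quotient. So one must check that the map $\fg_{-1}\to\fk$, $X\mapsto [Z,X]\bmod\fh$, intertwines $I$ on $\fg_{-1}$ with the complex structure on $\fk$; this is exactly the computation in the paper, $[i\,\ad(E_1-2E_2)\fg_{-1,0},\fg_{1,1}]=-i\,\fg_{0,1}=-i\,\ad(E_2)\fg_{0,1}$ and its companion for $\fg_{1,0}$, which shows that $\fg_1$ preserves the complex structure on $\fg_{-1}\oplus\fk$. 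Your argument, as written, only yields the invariant structure on $(\exp(\fg_-)\rtimes G_0)/G_{0,I}$; without the $\fp_+$--compatibility you cannot conclude that $G/H$ carries an invariant 2--nondegenerate CR geometry, which is the model actually used later (it is an open $G$--orbit inside $G/(G_{0,I}\rtimes\exp(\fp_+))$, with the smaller space sitting inside it as an open submanifold).

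A secondary point: your justification that $I=i\,\ad(E_1-2E_2)$ is defined on the real form relies on the claim that conjugation swaps $\fg_{a,b}\leftrightarrow\fg_{a,-b}$. That cannot be the right symmetry, since it would force $\fg_{-1,1}$ and $\fg_{-2,1}$ to be nonzero; the compatible statement is that conjugation sends $E_2$ to $E_1-E_2$, i.e.\ $\fg_{a,b}\mapsto\fg_{a,a-b}$ (so $\fg_{-1,0}\leftrightarrow\fg_{-1,-1}$, $\fg_{0,\pm1}$ swapped, $\fg_{-2,-1}$ fixed). Moreover this is not purely formal from the stated hypotheses: the paper gets reality of $-i\,\ad(E_2)$ on $\fg_0$ from the (pseudo)--Hermitian hypothesis and gets well--definedness of $I$ on $\fg_{-1}$ from the fact, taken from the classification, that $\fh$ acts on $\fg_{-1}$ by a complex representation. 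You flag the reality issue as "bookkeeping", but some input of this kind is needed rather than the asserted $b\mapsto -b$ symmetry. The rest of your outline (the eigenvalue computation, $[I(X),I(Y)]=[X,Y]$ via $\fg_{-2,0}=\fg_{-2,-2}=0$, antilinearity and injectivity of $\iota$ on $\fk$, and the regularity conditions) matches the "simple computation" part of the paper's proof and is fine.
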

\begin{proof}
If $(\fg_0,\fg_{0,I})$ is a (pseudo)--Hermitian symmetric pair, then $-iad(E_2)$ preserves $\fg_0$ and thus it is a derivation of $\fg$. It follows from the classification below that $\fh$ acts on $\fg_{-1}$ by a complex representation and thus $I=iad(E_1-2E_2)|_{\fg_{-1}}$ is well--defined.  Since $\fg_{1}\subset \fg_{1,0}\oplus  \fg_{1,1}$ and $[iad(E_1-2E_2)\fg_{-1,0},\fg_{1,1}]=-i\fg_{0,1}=-iad(E_2)\fg_{0,1}$ and  $[iad(E_1-2E_2)\fg_{-1,-1},\fg_{1,0}]=i\fg_{0,-1}=-iad(E_2)\fg_{0,-1}$ we see that elements $\fg_1$ preserve the complex structure on $\fg_{-1}\oplus\fk$ induced by $I$. Therefore, there is $H=G_{0,I}\rtimes \exp(\fp_+)$--invariant complex structure on a distribution on $G/H$ given by $\fg_{-1}\oplus \fk$ and $(\exp(\fg_-)\rtimes G_0)/G_{0,I}$ is an open submanifold of $G/(G_{0,I}\rtimes \exp(\fp_+))$. The remaining claims are obtained by a simple computation.
\end{proof}

There are few obvious conditions for the sets of simple roots $\Sigma_1$ and $\Sigma_2$ defining the bigradings from Proposition \ref{neces} that are visible due to the fact that for any path in the Dynkin diagram of $\fg\otimes \mathbb{C}$, there is a roots space given by the sum of the simple roots on the path:

\begin{itemize}
\item Along any path in the Dynkin diagram if we ignore simple roots outside of $\Sigma_1\cup \Sigma_2$, then simple roots inside and outside of $\Sigma_2$ has to alternate, otherwise there would be a root space not satisfying the conditions from Proposition \ref{neces}. Moreover, if there would be simple root in both $\Sigma_1$ and $\Sigma_2$, then $\Sigma_2\subset \Sigma_1$ clearly holds and we are in the situation of Theorem \ref{nondegclas}. Therefore, the bigradings are given by a choice of two distinct sets of simple roots $\Sigma_1$ and $\Sigma_2$, which are alternating along all paths in the Dynkin diagram (if we ignore simple roots outside of $\Sigma_1\cup \Sigma_2$).
\item If one removes the nodes of $\Sigma_1$ or $\Sigma_2$ from the Dynkin diagram of $\fg$, then $\Sigma_2$ or $\Sigma_1$ define a $|1|$--gradings of the Lie algebras given by the remaining nodes, respectively. In particular, $\fg_{-1,-1}$ is the $-2$--grading component of grading given by $\Sigma_1\cup \Sigma_2$.
\begin{cor*}
The bigrading of $\fg$ given by the sets $\Sigma_1\cup \Sigma_2$ and $\Sigma_2$ satisfies the properties of the Theorem \ref{nondegclas}.
\end{cor*}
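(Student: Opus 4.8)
The plan is to exploit the observation, already used in the bullet point preceding this corollary, that removing the nodes of $\Sigma_1$ from the Dynkin diagram of $\fg\otimes\C$ leaves a diagram on which $\Sigma_2$ defines a $|1|$--grading, so that along every path in the full Dynkin diagram the roots of $\Sigma_2$ are isolated among the roots of $\Sigma_1\cup\Sigma_2$. Concretely, I would first set up notation: let $\fg_{c,d}$ denote the bigrading components of the new bigrading associated with the ordered pair $(\Sigma_1\cup\Sigma_2,\Sigma_2)$, i.e. $c$ counts (with multiplicity) how many simple roots from $\Sigma_1\cup\Sigma_2$ appear in a root, and $d$ counts how many from $\Sigma_2$. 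The goal is then to verify conditions (1)--(5) of Theorem \ref{nondegclas} for $(\fg_{c,d})$, where the ``$\fg_0$'' of that theorem is the $0$--component of the $\Sigma_1\cup\Sigma_2$--grading and the complex structure is the one carried over from the original bigrading.

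The key steps, in order, are as follows. \emph{Step 1:} Express the new bigrading in terms of the old one. If $\fg_{a,b}$ is the original bigrading (with $a$ from $\Sigma_1$, $b$ from $\Sigma_2$, in the normalization of Proposition \ref{neces}), then $c=a+b$ (the $\Sigma_1\cup\Sigma_2$--height) and $d=b$, so $\fg_{c,d}=\bigoplus_{a+b=c,\ b=d}\fg_{a,b}=\fg_{c-d,d}$; this is a purely bookkeeping identity but it reduces everything to the already-established decompositions (2)--(6) of Proposition \ref{neces}. \emph{Step 2:} Read off condition (2) of Theorem \ref{nondegclas}: the $\Sigma_1\cup\Sigma_2$--grading component $\fg_c=\bigoplus_a\fg_{a,c-a}$ decomposes as $\bigoplus_d\fg_{c,d}$ by Step 1, so (2) is immediate. \emph{Step 3:} Condition (3): $\fg_{0,0}$ in the new grading is $\fg_{0-0,0}=\fg_{0,0}$ in the old one, and by Proposition \ref{neces}(3) this is exactly $\fg_{0,I}\otimes\C$, which is the reductive part we want; one checks it equals the full $0,0$ block, using that the $0$--component of the $\Sigma_1\cup\Sigma_2$--grading is $\fg_{0}^{\Sigma_1}\cap\fg_0^{\Sigma_2}$ and the alternation condition forces the cross terms to vanish. \emph{Step 4:} Condition (4): the new $\fg_{-1}$ in the $\Sigma_1\cup\Sigma_2$--grading consists of roots of $\Sigma_1\cup\Sigma_2$--height $-1$; such a root contains exactly one simple root from $\Sigma_1\cup\Sigma_2$, which by the alternation (no two $\Sigma_2$-roots adjacent after ignoring the rest, and $\Sigma_1\cap\Sigma_2=\emptyset$) lies in $\Sigma_1$ or in $\Sigma_2$ but yields only the two bidegrees $(-1,0)$ and $(-1,-1)$; using $c-d=a$ one matches these to $\fg_{-1,0}\oplus\fg_{-1,-1}$ in the old labelling, which by Proposition \ref{neces}(5) is $\fg_{-1}\otimes\C$ — so the new $\fg_{-1}\otimes\C=\fg_{-1,-1}^{\mathrm{new}}\oplus\fg_{-1,0}^{\mathrm{new}}$. \emph{Step 5:} Condition (5): the new $\fg_{-2}$ has $\Sigma_1\cup\Sigma_2$--height $-2$; one shows it has only bidegree $(-2,-1)$, either by the bracket relation $\fg_{-2}\supset[\fg_{-1},\fg_{-1}]$ together with Step 4 (a height-$(-1,0)$ and a height-$(-1,-1)$ piece bracket to $(-2,-1)$, while $(-1,0)+(-1,0)$ would give $(-2,0)$ which must vanish) or directly from Proposition \ref{neces}(6) via $\fg_{-2,-1}^{\mathrm{new}}=\fg_{-2+1,-1}^{\mathrm{old}}=\fg_{-1,-1}^{\mathrm{old}}$ plus the analysis that no $(-2,0)$ or $(-2,-2)$ roots occur. \emph{Step 6:} Condition (1): the complex structure. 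The representation of the new $G_0$ on the new $\fg_{-1}$ must be complex; I would carry over the complex structure from Proposition \ref{neces}, noting that $I$ there is $i\,\ad(E_1-2E_2)$ and that on the new $\fg_{-1}$ (which sits inside $\fg_{-1}^{\mathrm{old}}\oplus\fg_{0}^{\mathrm{old}}$-type pieces, but in fact by Step 4 inside the old $\fg_{-1,-1}\oplus\fg_{-1,0}$) this restricts to an $\ad$-invariant complex structure; invariance under the new $\fg_{0,0}=$ old $\fg_{0,0}$ is inherited, and one checks $[I(X),I(Y)]=[X,Y]$ as in the last line of the proof of Proposition \ref{neces}.

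The main obstacle is Step 4 combined with Step 6: one must be careful that the \emph{new} $\fg_{-1}$ is genuinely two-dimensional in bidegree (it a priori could acquire contributions one does not want, e.g. from old components of bidegree $(0,-1)$ or $(0,1)$ that sit in the old $\fg_0$ but — if they had $\Sigma_1\cup\Sigma_2$-height $-1$, which they cannot since they have height $0+(-1)$ only if... ) so the bookkeeping $c=a+b$, $d=b$ must be applied with the sign conventions of Proposition \ref{neces} exactly right, and the alternation property of $\Sigma_1,\Sigma_2$ along Dynkin paths must be invoked to kill the forbidden $(-2,0)$ and $(-1,1)$ root spaces. Once the translation $\fg_{c,d}^{\mathrm{new}}=\fg_{c-d,d}^{\mathrm{old}}$ is in hand, conditions (2)--(6) of Theorem \ref{nondegclas} are each a one-line consequence of the correspondingly numbered item of Proposition \ref{neces}, and condition (1) follows by transporting $I$; this is why I expect the whole corollary to reduce, modulo the careful height bookkeeping, to a direct comparison of the two lists of decompositions.
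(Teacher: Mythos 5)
Your overall strategy — translate the coarser bigrading into the original one via $c=a+b$, $d=b$, so that $\fg^{\mathrm{new}}_{c,d}=\fg^{\mathrm{old}}_{c-d,d}$, and then read off the conditions of Theorem \ref{nondegclas} from items (2)--(6) of Proposition \ref{neces} — is exactly the implicit argument of the paper, and your Steps 1--3 and 5 are essentially right: conditions (2)--(4) are automatic for any pair of disjoint sets of simple roots, and condition (5) is precisely the paper's remark that $\fg_{-1,-1}$ is the $-2$--component of the $\Sigma_1\cup\Sigma_2$--grading, which follows from the absence of old $(-2,0)$ and old $(0,-2)$ components, i.e.\ from Proposition \ref{neces}(6) and (4). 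However, Step 4 contains a genuine error. By your own formula $a=c-d$, the new component of bidegree $(-1,-1)$ is the \emph{old} $\fg_{0,-1}$, not the old $\fg_{-1,-1}$; hence the new $\fg_{-1}$ equals old $\fg_{-1,0}\oplus\fg_{0,-1}$ and is \emph{not} the old $\fg_{-1}\otimes\C$. One eigenspace of the complexified Levi kernel is absorbed into degree $-1$ of the coarser grading — that is the whole point of passing to $\Sigma_1\cup\Sigma_2$ — whereas your ``main obstacle'' paragraph tries to argue that the old $(0,-1)$ piece cannot contribute; it does, since its $\Sigma_1\cup\Sigma_2$--height is $0+(-1)=-1$.

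This misidentification breaks Step 6: since the new $\fg_{-1}$ is not contained in the old $\fg_{-1}\otimes\C$, you cannot obtain condition (1) by restricting $I=i\,\ad(E_1-2E_2)$ from the old $\fg_{-1}$. Worse, the two summands old $\fg_{-1,0}$ and old $\fg_{0,-1}$ have different dimensions in general (e.g.\ the hypersurface models with one--dimensional complex Levi kernel and large $\fg_{-1}$), so they cannot be swapped by the original real structure and cannot be the $\pm i$--eigenspaces of a complex structure defined over $\fg$; what one actually gets on the new $\fg_{-1}$ is an invariant decomposition into two abelian subalgebras (abelian because $\fg_{-2,0}=0$ and $\fg_{0,-2}=0$), i.e.\ a bigrading of the (para--)CR type classified in \cite{MS} and \cite{AMT06} — and that is precisely how the paper uses the corollary, namely to cross--check that $(\fg,\Sigma_1\cup\Sigma_2)$ occurs in that classification. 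So the fix is: keep Steps 1--3 and 5, replace the identification in Step 4 by new $\fg_{-1}=$ old $\fg_{-1,0}\oplus\fg_{0,-1}$, and verify condition (1) at the level of this bigraded decomposition rather than by transporting the original $I$ from the old $\fg_{-1}$.
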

\item $|\Sigma_2|=1$, because we would get contradiction with $\fg_{-1}\otimes \mathbb{C}= \fg_{-1,0}\oplus  \fg_{-1,-1}$ if $|\Sigma_2|\neq 1$.
\item $|\Sigma_1|$ is less than number of connected components of  Dynkin diagram of $\fg$ with $\Sigma_2$ removed.
\end{itemize}

These conditions allow us to classify all bigradings with the properties from Proposition \ref{neces} in an algorithmic fashion. 

\begin{enumerate}
\item We start with simple Lie algebra $\fg$ and pick one simple root to be in $\Sigma_2$.
\item Choose $\Sigma_1$ by picking at most one root in each connected components of the Dynkin diagram of $\fg$ with $\Sigma_2$ removed corresponding to a $|1|$--grading. Check that $\Sigma_2$ corresponds to $|1|$--grading of Dynkin diagram of $\fg$ with $\Sigma_1$ nodes removed and that $\Sigma_1$ defines at least $2$ grading of $\fg$. Equivalently, one can cross check that $(\fg,\Sigma_1\cup \Sigma_2)$ appears in the classification of \cite{AMT06}.
\item Check that $-3$--grading component of $(\fg,\Sigma_1\cup \Sigma_2)$ is $\fg_{-2,-1}$ and check that $-4$--grading component of $(\fg,\Sigma_1\cup \Sigma_2)$ is $\fg_{-3,-1}$. 
\end{enumerate}

\begin{lem*}
The set of all bigradings $\fg_{a,b}$ of complex simple Lie algebras $\mathfrak{g}_{\mathbb{C}}$ with the properties from Proposition \ref{neces} corresponds to the entries of the Table \ref{class}.
\noindent
\begin{table}\caption{Admissible bigradings}\label{class}
\begin{tabular}{|c|c|c|c|c|}
\hline
$\mathfrak{g}_{\mathbb{C}}$ & $\Sigma_1$ & $\Sigma_2$& restrictions\\
\hline
$\mathfrak{sl}(n+1,\mathbb{C})$&$\{\alpha_r,\alpha_s\}$&$\{\alpha_t\}$  & $r<t<s$ \\
\hline
$\mathfrak{so}(2n+1,\mathbb{C})$& $\{\alpha_r,\alpha_{r+2}\}$&$\{\alpha_{r+1}\}$&  \\
$\mathfrak{so}(2n+1,\mathbb{C})$& $\{\alpha_2\}$& $\{\alpha_1\}$&  \\
\hline
$\mathfrak{sp}(2n,\mathbb{C})$& $\{\alpha_r\}$&$\{\alpha_n\}$&\\
\hline
$\mathfrak{so}(2n,\mathbb{C})$& $\{\alpha_2\}$& $\{\alpha_1\}$& \\
$\mathfrak{so}(2n,\mathbb{C})$& $\{\alpha_r\}$& $\{\alpha_n\}$&  $1<r<n-1$\\
$\mathfrak{so}(2n,\mathbb{C})$& $\{\alpha_r,\alpha_{r+2}\}$&$\{\alpha_{r+1}\}$&  $r< n-3$\\
$\mathfrak{so}(2n,\mathbb{C})$& $\{\alpha_{n-3},\alpha_{n-1},\alpha_n\}$& $\{\alpha_{n-2}\}$& \\
\hline
$\mathfrak{g}_2(\mathbb{C})$ & $\{\alpha_1\}$&$\{\alpha_2\}$&\\
\hline
$\mathfrak{f}_4(\mathbb{C})$ & $\{\alpha_2\}$&$\{\alpha_1\}$&\\
$\mathfrak{f}_4(\mathbb{C})$ & $\{\alpha_1,\alpha_3\}$&$\{\alpha_2\}$&\\
\hline
$\mathfrak{e}_6(\mathbb{C})$ & $\{\alpha_2\}$&$\{\alpha_1\}$&\\
$\mathfrak{e}_6(\mathbb{C})$ & $\{\alpha_6\}$&$\{\alpha_1\}$&\\
$\mathfrak{e}_6(\mathbb{C})$ & $\{\alpha_3\}$&$\{\alpha_6\}$&\\
$\mathfrak{e}_6(\mathbb{C})$ & $\{\alpha_1,\alpha_3\}$&$\{\alpha_2\}$&\\
$\mathfrak{e}_6(\mathbb{C})$ & $\{\alpha_2,\alpha_4,\alpha_6\}$&$\{\alpha_3\}$&\\
\hline
$\mathfrak{e}_7(\mathbb{C})$ & $\{\alpha_2\}$&$\{\alpha_1\}$&\\
$\mathfrak{e}_7(\mathbb{C})$ & $\{\alpha_5\}$&$\{\alpha_6\}$&\\
$\mathfrak{e}_7(\mathbb{C})$ & $\{\alpha_6\}$&$\{\alpha_1\}$&\\
$\mathfrak{e}_7(\mathbb{C})$ & $\{\alpha_4\}$&$\{\alpha_6\}$&\\
$\mathfrak{e}_7(\mathbb{C})$ & $\{\alpha_1,\alpha_3\}$&$\{\alpha_2\}$&\\
$\mathfrak{e}_7(\mathbb{C})$ & $\{\alpha_2,\alpha_4\}$&$\{\alpha_3\}$&\\
$\mathfrak{e}_7(\mathbb{C})$ & $\{\alpha_4,\alpha_6\}$&$\{\alpha_5\}$&\\
$\mathfrak{e}_7(\mathbb{C})$ & $\{\alpha_3,\alpha_5,\alpha_7\}$&$\{\alpha_4\}$&\\
\hline
$\mathfrak{e}_8(\mathbb{C})$ & $\{\alpha_2\}$&$\{\alpha_1\}$&\\
$\mathfrak{e}_8(\mathbb{C})$ & $\{\alpha_6\}$&$\{\alpha_7\}$&\\
$\mathfrak{e}_8(\mathbb{C})$ & $\{\alpha_5\}$&$\{\alpha_8\}$&\\
$\mathfrak{e}_8(\mathbb{C})$ & $\{\alpha_1,\alpha_3\}$&$\{\alpha_2\}$&\\
$\mathfrak{e}_8(\mathbb{C})$ & $\{\alpha_2,\alpha_4\}$&$\{\alpha_3\}$&\\
$\mathfrak{e}_8(\mathbb{C})$ & $\{\alpha_3,\alpha_5\}$&$\{\alpha_4\}$&\\
$\mathfrak{e}_8(\mathbb{C})$ & $\{\alpha_5,\alpha_7\}$&$\{\alpha_6\}$&\\
$\mathfrak{e}_8(\mathbb{C})$ & $\{\alpha_4,\alpha_6,\alpha_8\}$&$\{\alpha_5\}$&\\
\hline
\end{tabular}
\end{table}
\end{lem*}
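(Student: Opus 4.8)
The plan is to translate the conditions of Proposition~\ref{neces} into combinatorial conditions on the pair $(\Sigma_1,\Sigma_2)$ of sets of simple roots determining the bigrading, and then to run a finite enumeration over the Dynkin types $A_n,B_n,C_n,D_n,G_2,F_4,E_6,E_7,E_8$. Fix a base of simple roots of $\fg_{\mathbb{C}}$ containing a Cartan subalgebra compatible with the bigrading. A bigrading compatible with this Cartan subalgebra is the datum of a pair of grading elements, and by Proposition~\ref{neces}(2)--(3) --- which forces $\fg_{0,0}$ to contain both the Cartan subalgebra and the grading element of the $\fg_i$-grading --- this datum is equivalent to a choice of two disjoint sets $\Sigma_1,\Sigma_2$ of simple roots, $a$ counting (with sign) the simple roots of $\Sigma_1$ and $b$ counting (with sign) those of $\Sigma_2$ occurring in a given root. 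Using that the sum of the simple roots along any connected path in the Dynkin diagram is a root, conditions (1)--(6) become:
\begin{itemize}
\item $|\Sigma_2|=1$: if $|\Sigma_2|\geq2$, pick two nodes of $\Sigma_2$ joined by a path containing no third $\Sigma_2$-node; by the alternation item that path contains exactly one $\Sigma_1$-node, so the root it spans has $\Sigma_1$-degree $1$ and $\Sigma_2$-degree $2$, contradicting (5).
\item \emph{Alternation}: along every path in the Dynkin diagram, after discarding the nodes outside $\Sigma_1\cup\Sigma_2$, the nodes in $\Sigma_2$ and those in $\Sigma_1$ alternate; equivalently, $\Sigma_1$ meets each connected component of the diagram with $\Sigma_2$ deleted in at most one node.
\item \emph{Depth}: $\Sigma_1$ defines a grading of $\fg$ of depth $\geq2$ (the sum of the coefficients of the $\Sigma_1$-nodes in the highest root is at least $2$), and the node $\Sigma_2$ has coefficient $1$ in the highest root of the component of the $\Sigma_1$-deleted diagram containing it --- so that $\fg_0\otimes\mathbb{C}=\fg_{0,-1}\oplus\fg_{0,0}\oplus\fg_{0,1}$, in particular $\fk\neq0$.
\item \emph{Height}: $\fg_{-1}\otimes\mathbb{C}=\fg_{-1,-1}\oplus\fg_{-1,0}$ and $\fg_{-2}\otimes\mathbb{C}=\fg_{-2,-1}$, and for the grading of $\fg$ by $\Sigma_1\cup\Sigma_2$ the degree $-3$ component equals $\fg_{-2,-1}$ and the degree $-4$ component equals $\fg_{-3,-1}$ (roughly: all roots of $\Sigma_1$-degree $\leq-3$ have $\Sigma_2$-degree $-1$).
\end{itemize}
Conversely, every pair $(\Sigma_1,\Sigma_2)$ satisfying these produces a bigrading with all properties (1)--(6): (2) is automatic, (3)--(6) are precisely the bidegree statements above, and (1) holds because $[\fg_{0,1},\fg_{0,1}]\subset\fg_{0,2}=0$ by (4), so that $(\fg_{0,0},\fg_{0,-1}\oplus\fg_{0,1})$ is a symmetric decomposition, hence (pseudo)--Hermitian symmetric by \cite{Be00} exactly as in the proof of Proposition~\ref{neces}. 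Thus the lemma is equivalent to listing all such $(\Sigma_1,\Sigma_2)$.

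For the classical series I would argue uniformly in $n$. Fixing $\Sigma_2=\{\alpha_t\}$, deleting $\alpha_t$ from the diagram of $A_n,B_n,C_n,D_n$ leaves a disjoint union of smaller classical diagrams; by alternation $\Sigma_1$ contains at most one node in each, and that node must cut out a $|1|$-grading of its component --- any node in type $A$, the unique node of highest-root coefficient $1$ in types $B$ and $C$, and one of the three nodes of highest-root coefficient $1$ in type $D$. One then imposes the depth condition on $\Sigma_1$, the coefficient-$1$ condition on $\Sigma_2$, and finally the height condition; the last is exactly what produces the inequality restrictions of Table~\ref{class}, e.g.\ $r<t<s$ for $\mathfrak{sl}(n+1,\mathbb{C})$, $r<n-3$ for the third $\mathfrak{so}(2n,\mathbb{C})$ row, or $1<r<n-1$ for the second. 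The survivors are precisely the classical entries of the table, each of which may be cross-checked against the list of \cite{AMT06} by verifying that $(\fg,\Sigma_1\cup\Sigma_2)$ appears there (step (2) of the algorithm preceding the statement).

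Finally, the exceptional types $G_2,F_4,E_6,E_7,E_8$ are a finite computation by hand, organized the same way: for each single node $\Sigma_2=\{\alpha_t\}$ one lists the admissible $\Sigma_1$ (at most one node per component of the $\Sigma_2$-deleted diagram, each of highest-root coefficient $1$ in its component), discards those for which $\Sigma_1$ has depth $1$ or $\Sigma_2$ fails the coefficient-$1$ condition, and verifies the height condition directly from the list of positive roots (equivalently, from the marked highest root of the $\Sigma_1\cup\Sigma_2$-grading). I expect this last verification --- ruling out, in the exceptional cases, the few remaining pairs for which a root of $\Sigma_1$-degree $3$ or $4$ carries $\Sigma_2$-coefficient $\geq2$ --- to be the only delicate step and the bulk of the work, its classical analogue being the uniform computation that yields the stated numerical restrictions. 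Collecting all survivors gives exactly the entries of Table~\ref{class}.
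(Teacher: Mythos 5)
Your proposal is correct and follows essentially the same route as the paper: you translate conditions (2)--(6) of Proposition \ref{neces} into the combinatorial constraints on $(\Sigma_1,\Sigma_2)$ (alternation, $|\Sigma_2|=1$, the $|1|$--grading/depth conditions, and the degree $-3,-4$ "height" check), note the converse via the same symmetric-pair argument from \cite{Be00}, and then reduce the lemma to a type-by-type enumeration over the Dynkin diagrams. The only difference is bookkeeping: you organize the classical enumeration by fixing $\Sigma_2$ and listing admissible $\Sigma_1$ per component and defer the explicit verification of the numerical restrictions, whereas the paper runs the cases by $|\Sigma_1|$ and exhibits the specific offending roots (e.g.\ $\alpha_{i-1}+2\alpha_i+2\alpha_{i+1}+\dots$), treating the exceptional types, as you do, as a finite check.
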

\begin{proof}
Type $A$: We need $|\Sigma_1|=2$ to be satisfied in order to get two grading, otherwise, there are no restrictions on $\Sigma_1,\Sigma_2$.

Type $B$: If $|\Sigma_1|=1$, then the nodes of $\Sigma_1$ and $\Sigma_2$ has to be next to each other. If $1\notin \Sigma_2$, then the root space of $\alpha_{i-1}+2\alpha_{i}+2\alpha_{i+1}+\dots$ is in $\fg_{-2,-2}$. If $1\in \Sigma_2$, then $\alpha_1+2\alpha_2+\dots$ is the unique root with root space in $-3$--grading component of $(\fg,\Sigma_1\cup \Sigma_2)$.

Type $B$: If $|\Sigma_1|=2$, then if the nodes of $\Sigma_1$ and $\Sigma_2$ are not next to each other, then the root space of $\alpha_{i-1}+2\alpha_{i}+2\alpha_{i+1}+\dots$ is in $\fg_{-2,-2}$. Otherwise, $\alpha_i+2\alpha_{i+1}+\dots$, $\dots+\alpha_{i-1}+\alpha_i+\alpha_{i+1}+\dots$ are the only root spaces in $-3$--grading component of $(\fg,\Sigma_1\cup \Sigma_2)$ and $\dots+\alpha_{i-1}+2\alpha_i+2\alpha_{i+1}+\dots$  are the only root spaces in $-4$--grading component of $(\fg,\Sigma_1\cup \Sigma_2)$.

Type $C$: If $|\Sigma_1|=1$, then $\Sigma_2=\{l\}$ and $\dots+2\alpha_i+\dots+\alpha_l$ are the only root spaces in  $-3$--grading component of $(\fg,\Sigma_1\cup \Sigma_2)$.

Type $C$: If $|\Sigma_1|=2$, then $\{l\}\subset \Sigma_1$ and $2\alpha_i+\dots+\alpha_l$ has root space in $\fg_{-1,-2}$.

Type $D$:  If $|\Sigma_1|=1$, then either $\Sigma_1$ and $\Sigma_2$ has to be next to each other or $\Sigma_2=\{l\}$ (up to outer automorphism of the Dynkin diagram). The first case coincides with the type $B$. In the second case, then $\dots+2\alpha_2+\dots+\alpha_l$ are the only roots with root space in $-3$--grading component of $(\fg,\Sigma_1\cup \Sigma_2)$.

Type $D$:  If $|\Sigma_1|=2$, then if $\Sigma_1$ and $\Sigma_2$ are not next to each other then $2\alpha_i+\dots$ is either in $\fg_{-2,-2}$ or $\fg_{-1,-2}$. If $X_2\neq \{l-2\}$, then we are in the case that coincides with the $B$ case.  If $\Sigma_2=\{l-2\}$, then $\alpha_{l-3}+2\alpha_{l-2}+\dots$ has root space in $\fg_{-2,-2}$.

Type $D$:  If $|\Sigma_1|=3$,  then $\Sigma_2=\{l-2\}$ and $\{l-1,l\}\subset \Sigma_1$. If $l-3\notin\Sigma_1$, then $\alpha_{l-3}+2\alpha_{l-2}+\dots$ has root space in $\fg_{-2,-2}$. Otherwise, all roots containing two $\alpha_{l-2}$ are in $\fg_{-3,-2}$ and $\fg_{-4,-2}$.

Types $E,F,G$: The entries of the tables can be obtained by going through the finite number of all possibilities.
\end{proof}

Thus it remains to go trough the list of real forms $\fg$ of the Lie algebras from the Table \ref{class} that admit the grading given by $\Sigma_1$ and check the classification of the (pseudo)--Hermitian symmetric spaces, c.f. \cite{odk9}, whether $(\fg_0',\fg_{0,I}')$ is (pseudo--)Herminitan symmetric pair, where $(\fg_0',\fg_{0,I}')$ is the effective quotient of the pair $(\fg_0,\fg_{0,I})$. This provides us the Tables \ref{realclasA}, \ref{realclasB}.

\subsection{Embedding of the models}\label{sec34}

Let us consider the homogeneous spaces $G/H$ and $(\exp(\fg_-)\rtimes G_0)/G_{0,I}$ from the Lemma \ref{absmod} corresponding to entries from our classification in Tables \ref{realclasA}, \ref{realclasB}. We show that we can (locally) embed them into a complex space. We call this embedding the standard model of 2--nondegenerate CR submanifold with regular second--order Levi--Tanaka algebra $(\fg_-,I,\fk)$.

\begin{prop*}\label{embed}
Let $(\fg_-,I,\fk)$ be a regular second--order Levi--Tanaka algebra with homogeneous model $G/H$ corresponding to an entry in Table \ref{realclasA} or \ref{realclasB}. Then there is a realization of the real form $\fg$ of the complexification $\fg\otimes \mathbb{C}$ such that $\fn:=(\fg_{-\mu}\oplus \dots \oplus \fg_{-2})\otimes \mathbb{C}\oplus \fg_{-1,-1}\oplus \fg_{0,-1}= \fg\otimes \mathbb{C}/\fp_{\Sigma_2}$ and $\fg\cap \fp_{\Sigma_2}=\fh$, where $\fp_{\Sigma_2}$ is a parabolic subalgebra of $\fg\otimes \mathbb{C}$ determined by set of simple roots $\Sigma_2$. In particular, $G$ has (up to a covering) an open orbit with stabilizer $H$ in the complex flag variety of type $(\fg\otimes \mathbb{C},\fp_{\Sigma_2})$.

Moreover, let $\phi: \fg_-\oplus\fk\to \fn$ be given by
\begin{align*}
&\phi(X+Y):=\exp^{-1}(\exp(X)\exp(\frac12(Y-iI(Y)))\exp(-\frac12(X_{-1}+iI(X_{-1}))))\\
&=\frac12(X_{-1}-iI(X_{-1}))+\frac14[X_{-1}+iI(X_{-1}),Y-iI(Y)]+\frac12(Y-iI(Y)))\\
&+X_{-2}+i\frac14[I(X_{-1}),X_{-1}]+\frac18[X_{-1}+iI(X_{-1}),[X_{-1}+iI(X_{-1}),Y-iI(Y)]]\\
&\vdots\\
&+\phi_1(X)_{\mu}+\frac{1}{|\mu|!}ad(\frac12(X_{-1}+iI(X_{-1})))^{|\mu|}(\frac12(Y-iI(Y)))\\
&+f(\phi_1(X)_{\mu+1}+\dots+\phi_1(X)_{-1},\sum_{i=0}^{\mu-1}ad(\frac12(X_{-1}+iI(X_{-1})))^i(\frac12(Y-iI(Y))))_{\mu}
\end{align*}
for $X=X_{\mu}+\dots+X_{-1}\in \fg_{-\mu}\oplus \dots \oplus \fg_{-1}$ and $Y\in \fk$, where $f(Y,Z)_{-i}$ denotes the component of $$f(Y,Z):=\sum_{n=1}^{\infty}\frac{(-1)^{n}}{n+1}\sum_{r_i+s_i>0}\frac{ad(Y)^{r_1}ad(Z)^{s_1}\dots ad(Y)^{r_n}ad(Z)^{s_n}(Y)}{(1+\sum_{i=1}^nr_i)\prod_{i=1}^n r_i!s_i!}$$
 in $\fg_{-i}\otimes \mathbb{C}$. Then the submanifold $\phi(\fg_-\oplus\fk)$ of the complex space $\fn$ is a regularly 2--nondegenerate CR submanifold with second--order Levi--Tanaka algebra $(\fg_-,I,\fk)$ and Lie algebra of infinitesimal CR automorphisms $\fg$.
\end{prop*}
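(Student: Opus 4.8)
The plan is to realize $\fg$ inside $\fg\otimes\mathbb{C}$ via a conjugation adapted to the second grading element $E_2$, verify that the promised nilpotent complement $\fn$ is complementary to $\fp_{\Sigma_2}$, and then show that $\phi$ parametrizes the $G$-orbit through the base point of the flag variety $G\otimes\mathbb{C}/P_{\Sigma_2}$ in such a way that the induced CR data match $(\fg_-,I,\fk)$.

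\emph{Step 1: the real form and the splitting $\fg\otimes\mathbb{C}=\fn\oplus\fp_{\Sigma_2}$.} From Proposition \ref{neces} and Lemma \ref{absmod} we have the bigrading $\fg_{a,b}$ with grading elements $E_1,E_2$; the parabolic $\fp_{\Sigma_2}$ is by definition $\bigoplus_{b\geq 0}(\bigoplus_a\fg_{a,b})$. A direct bookkeeping with conditions (2)--(6) of Theorem \ref{main} shows that the $b<0$ part of $\fg\otimes\mathbb{C}$ is exactly $\fn=(\fg_{-\mu}\oplus\dots\oplus\fg_{-2})\otimes\mathbb{C}\oplus\fg_{-1,-1}\oplus\fg_{0,-1}$: indeed $\fg_{<0}\otimes\mathbb{C}$ contributes $\fg_{-1,-1}$, $\fg_{-2,-1}$, and all lower $\fg_i\otimes\mathbb{C}$ (which by the $-3,-4$ grading checks in the previous Lemma lie entirely in $b=-1$), while $\fg_0\otimes\mathbb{C}$ contributes $\fg_{0,-1}$. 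This gives $\fg\otimes\mathbb{C}=\fn\oplus\fp_{\Sigma_2}$ as vector spaces. For the real form, one chooses the conjugation $\sigma$ defining $\fg$ so that $\sigma(E_1)=E_1$ (given, since $E_1$ preserves $\fg$) and $\sigma$ acts on the $\fg_{a,b}$ compatibly with the symmetric-pair structure of $(\fg_0,\fg_{0,I})$ from condition (1); then $\sigma(\fp_{\Sigma_2})$ is the opposite parabolic, so $\fg\cap\fp_{\Sigma_2}$ is the Levi-type real subalgebra, which one identifies with $\fh$ using that $\fh\subset\fp_{\Sigma_1}$ has Levi part $\fg_{0,I}=\fg_0\cap\fg_{0,0}=\fg\cap(\fg_{0,0})$ and nilradical $\fp_+\cap\fg$. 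Consequently $G$ acts on $\fg\otimes\mathbb{C}/\fp_{\Sigma_2}$ with $\fg/\fh\cong\fn$ at the base point, hence (up to covering) an open orbit with stabilizer $H$; this is the local embedding into the complex flag variety, which near the base point is biholomorphic to the complex vector space $\fn$ via the exponential chart $\fn\ni Z\mapsto \exp(Z)\fp_{\Sigma_2}$.

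\emph{Step 2: the map $\phi$ is the orbit map in the chart.} The definition of $\phi(X+Y)$ is precisely $\exp^{-1}$ of the $\fn$-part of $\exp(X)\exp(\tfrac12(Y-iI(Y)))\exp(-\tfrac12(X_{-1}+iI(X_{-1})))$ relative to the decomposition $\fg\otimes\mathbb{C}=\fn\oplus\fp_{\Sigma_2}$, i.e.\ the Baker--Campbell--Hausdorff normal form of that product pushed to the chart $\fn$. The displayed series is just the BCH expansion organized by grading weight; the function $f(Y,Z)$ is the standard "$\log(\exp Y\exp Z)$ minus $Y$" series, used to absorb the correction terms coming from moving the $\exp(-\tfrac12(X_{-1}+iI(X_{-1})))$ factor past $\exp$ of the $\fn$-part. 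So the claim to verify is: the real-analytic map $\fg_-\oplus\fk\to\fn$, $(X,Y)\mapsto\phi(X+Y)$, is an immersion whose image is an open piece of the $G$-orbit of $eH$. That $X$ ranges over a real form of $\fg_-\otimes\mathbb{C}$ and $Y$ over $\fk\subset(\fg_{0,-1}\oplus\fg_{0,1})\cap\fg$, together with the shape of the leading term $\tfrac12(X_{-1}-iI(X_{-1}))+\tfrac12(Y-iI(Y))+X_{-2}+\dots$, shows that the differential at $0$ is injective (its image being the half-dimensional real subspace of $\fn$ spanned by $(1{-}iI)\fg_{-1}$, $(1{-}iI)\fk$ and the real $\fg_{<-1}\otimes\mathbb{C}$), hence $\phi$ is an embedding near $0$; and since $\phi(X+Y)\fp_{\Sigma_2}=\exp(X)\exp(\tfrac12(Y-iI(Y)))\,eP_{\Sigma_2}$ lies in the $G$-orbit (because $\exp X\in G$ and $\exp(\tfrac12(Y-iI(Y)))\in\exp(\fk)\subset$ the complexification, with the $\fp_{\Sigma_2}$-ambiguity killed by the third factor), the image is contained in, and by dimension count equal to an open subset of, the orbit. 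Therefore $\phi(\fg_-\oplus\fk)$ is (a local piece of) $G/H\hookrightarrow\fg\otimes\mathbb{C}/\fp_{\Sigma_2}$, which by Lemma \ref{absmod} carries exactly the CR geometry with second--order Levi--Tanaka algebra $(\fg_-,I,\fk)$, and whose infinitesimal CR automorphisms form $\fg$.

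\emph{Step 3: matching the CR data and the 2--nondegeneracy.} It remains to confirm that the complex structure $\mathcal{I}$ on $T^{-1}(\phi(\fg_-\oplus\fk))$ induced from multiplication by $i$ on $\fn$ corresponds under $\phi$ to $I=i\,\ad(E_1-2E_2)|_{\fg_{-1}}$ on $\fg_{-1}$ and to the composition with $I$ on $\fk$ (as in Lemma \ref{LTa}(4)), and that the Levi kernel of this embedding is $\fk$ with $\iota$ injective. This is a direct consequence of the construction: the $\pm i$-eigenspaces of multiplication by $i$ on the relevant graded pieces of $\fn$ are $\fg_{-1,-1}$ (the $-i$, i.e.\ "holomorphic" side, matching $b=-1$) versus the conjugate $\fg_{-1,1}$-image, and the factor $\exp(\tfrac12(Y-iI(Y)))$ places $\fk$ into its holomorphic realization $\fg_{0,-1}$; the integrability $[T^{10},T^{10}]\subset T^{10}$ is automatic because $T^{10}$ is (the image of) the complex subalgebra $\fn\cap(\bigoplus_b\fg_{\ast,-|b|})$-type piece together with $\fg_{0,-1}$, closed under bracket by conditions (2)--(6). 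Regular 2--nondegeneracy then follows from Lemma \ref{absmod} together with Proposition \ref{neces} (or is checked directly: $\iota$ is injective since $\fk\hookrightarrow\frak{gl}(\fg_{-1})$ already as a subalgebra of derivations, and condition (3) of Definition \ref{def2dg} holds by $[[\fk,\fk],\fk]\subset\fk$). The "simple computation" for the remaining bracket-relations is exactly the one deferred in the proof of Lemma \ref{absmod}.

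\emph{Main obstacle.} The substantive point is Step 2: showing that the BCH series defining $\phi$ actually reproduces the orbit map in the chart, i.e.\ that the third exponential factor $\exp(-\tfrac12(X_{-1}+iI(X_{-1})))$ together with the $f$-correction precisely cancels the $\fp_{\Sigma_2}$-component of $\exp(X)\exp(\tfrac12(Y-iI(Y)))$, so that $\phi(X+Y)$ lies in $\fn$ and equals the logarithm of the orbit point. Equivalently, one must check that $\tfrac12(X_{-1}+iI(X_{-1}))$ is the correct "$\fp_{\Sigma_2}$-direction" to factor out, which rests on the identity $X_{-1}=\tfrac12(X_{-1}-iI(X_{-1}))+\tfrac12(X_{-1}+iI(X_{-1}))$ splitting $\fg_{-1}\otimes\mathbb{C}$ into $\fg_{-1,-1}\oplus\fg_{-1,0}$ (condition (5)) with the first summand in $\fn$ and the second in $\fp_{\Sigma_2}$ — this is where conditions (3)--(6) of Theorem \ref{main} are genuinely used — and then a weight-by-weight induction to see that the higher BCH terms reorganize exactly as displayed. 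The rest is the bookkeeping already carried out in Lemma \ref{absmod} and the previous classification Lemma, so I would present Step 2 in full and merely indicate Steps 1 and 3.
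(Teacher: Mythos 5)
Your architecture is the same as the paper's: realize the model in the big cell $\exp(\fn)$ of the flag variety of type $(\fg\otimes\C,\fp_{\Sigma_2})$, identify $\phi$ with the Baker--Campbell--Hausdorff normal form of $\exp(X)\exp(\frac12(Y-iI(Y)))\exp(-\frac12(X_{-1}+iI(X_{-1})))$ in logarithmic coordinates (the third factor cancelling the $\fg_{-1,0}$-, i.e.\ $\fp_{\Sigma_2}$-, component, exactly the weight-by-weight BCH bookkeeping the paper also invokes), and then pull the CR data and the $\fg$-action back from the homogeneous model of Lemma \ref{absmod}. The genuine gap sits at the one point to which the paper devotes the first paragraph of its proof. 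You justify that $\exp(X)\exp(\frac12(Y-iI(Y)))\cdot eP_{\Sigma_2}$ lies in the $G$-orbit by asserting $\exp(\frac12(Y-iI(Y)))\in\exp(\fk)$; this is false: $\frac12(Y-iI(Y))$ is not in $\fk$ but in one of the $I$-eigenspaces $\fg_{0,\pm1}\subset\fk\otimes\C$, so its exponential lies in $G_{\C}$ and not in $G$, and membership in the complexification gives no control over the real orbit. The missing ingredient is the Borel/Harish--Chandra-type statement for the (pseudo-)Hermitian symmetric pair $(\fg_0,\fg_{0,I})$: the $G_0$-orbit of the base point in $G_{0,\C}/(G_{0,0}\rtimes\exp(\fg_{0,\mp1}))$ is open and $Y\mapsto\exp(\frac12(Y-iI(Y)))$ provides coordinates (different from the logarithmic ones) near the base point, so that for $Y$ near $0$ the point does lie in $G_0\cdot o$; only then is $\phi$, near the origin, a diffeomorphism onto a piece of $G\cdot eP_{\Sigma_2}\cong G/H$. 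Note also that the containment is genuinely only local: $Y\mapsto\frac12(Y-iI(Y))$ is an $\R$-linear isomorphism of $\fk$ onto the whole eigenspace, so the image of $\phi$ fills the full big-cell direction, which in general strictly contains the $G_0$-orbit; this is why the paper calls $\phi$ only the \emph{analytic extension} of the local diffeomorphism, and your ``by dimension count equal to an open subset of the orbit'' must be read locally at $0$.

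Two secondary corrections. In Step 1, $\sigma(\fp_{\Sigma_2})$ is not the opposite parabolic and $\fg\cap\fp_{\Sigma_2}$ is not of Levi type: the conjugation fixes the first grading and interchanges the $I$-eigenspace decompositions, so it sends $\fg_{a,b}$ to $\fg_{a,a-b}$; hence $\fp_{\Sigma_2}\cap\sigma(\fp_{\Sigma_2})=\fg_{0,0}\oplus(\fp_+\otimes\C)=\fh\otimes\C$ and $\fg\cap\fp_{\Sigma_2}=\fh$ --- your conclusion is correct, your justification is not (relatedly, the $G$-orbit has positive real codimension $\dim(\fg_{-\mu}\oplus\dots\oplus\fg_{-2})$ in the flag variety, so ``open orbit'' cannot be read literally; that phrase is inherited from the statement, not introduced by you). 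Finally, for the last claim you, like the paper, only exhibit $\fg$ inside the algebra of infinitesimal CR automorphisms (the paper additionally writes these vector fields in the chart via $(\Ad(\exp(-Y))(A))_{\fn}$ and BCH); that the algebra is not larger rests on Lemma \ref{knf} and Theorem \ref{abspar}, which is consistent with how the proposition is used.
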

\begin{proof}
If we consider the complexified symmetric space $G_{0,\mathbb{C}}/G_{0,0}$ corresponding to the simple complex Hermitian symmetric pair $(\fg_0\otimes \mathbb{C},\fg_{0,0})$, then we have complementary (at $e$) subgroups $\exp(\fg_{0,1})$ and $G_{0,0}\rtimes \exp(\fg_{0,-1})$ of $G_{0,\mathbb{C}},$ i.e., the abelian complex Lie group $\exp(\fg_{0,1})$ has open orbit in the complex manifold $G_{0,\mathbb{C}}/G_{0,0}\rtimes \exp(\fg_{0,-1})$. At the same time $\fk\subset \fg_{0,-1}\oplus \fg_{0,1}$ defines an open submanifolds $\exp(\fk)$ in  the complex manifold $G_{0,\mathbb{C}}/G_{0,0}\rtimes \exp(\fg_{0,-1})$. If $X$ in a vector space with a complex structure $I$, then $X-iI(X)$ is in $-i$--eigenspace of $I$ on the complexification and $X+iI(X)$ is in $i$--eigenspace of $I$ on the complexification. Therefore, $\exp(\frac12(Y-iI(Y)))\mapsto Y$ for $Y$ in some neighborhood of $0$ in $\fk$ provide coordinates of $\exp(\fk)$ (different than the logarithmic coordinates). Therefore, there is a local diffeomorphism between a neighborhood of $0$ in  $G/H$ and its image in the open submanifold $\exp(\fn)$ of the complex manifold $(\exp(\fg_-\otimes \mathbb{C})\rtimes G_{0,\mathbb{C}})/(\exp(\fg_{-1,-1})\rtimes G_{0,0}\rtimes \exp(\fg_{0,-1}))$. Choosing a point in the open submanifold $\exp(\fn)$, which is the so--called big cell of the complex flag variety of type $(\fg\otimes \mathbb{C},\fp_{\Sigma_2})$ provides (by conjugation) the real form $\fg$ with the claimed properties.

Thus it remains to show that $\phi$ is the (analytic extension) of this local diffeomorphism in some coordinates. Clearly, $\exp(X)\exp(\frac12(Y-iI(Y)))\mapsto X+Y$ provides coordinates of the open submanifold $\exp(\fg_-)\exp(\fk)\subset G/H$. Further, we use the logarithmic coordinates on $\exp(\fn)\subset (\exp(\fg_-\otimes \mathbb{C})\rtimes G_{0,\mathbb{C}})/(\exp(\fg_{-1,-1})\rtimes G_{0,0}\rtimes \exp(\fg_{0,-1}))$. Since $\fg_{0,1}$ is subalgebra of $\fn$, there is an element of $Z\in \fg_{-1,-1}$ such that $\exp(X)\exp(\frac12(Y-iI(Y)))\exp(Z)\in \exp(\fn)$. It is clear from the Baker--Campbell--Hausdorff formula for $\exp^{-1}(\exp(X)\exp(\frac12(Y-iI(Y)))\exp(Z))$ that $Z$ and $\phi$ have the claimed form.

Let us express the infinitesimal CR automorphism $A\in \fg$ in the coordinates of the embedding $\phi: \fg_-\oplus \fk\to \fn$. If we identify $T_{y}\exp(\fn)\cong \fn$ via left--invariant vector fields, then the infinitesimal CR automorphism corresponds at $y=\exp(Y),Y\in \fn$ to $(Ad(\exp(-Y))(A))_{\fn}$, where $(Z)_{\fn}$ is the component of $Z\in \fg$ in $(\fg_-\oplus \fk)\otimes \mathbb{C}$ projected along $\fg_{-1,-1}\oplus \fg_{0,-1}$ into $\fn$. In the logarithmic coordinates, we can apply the Baker--Campbell--Hausdorff formula on 
\begin{align*}
&\frac{d}{dt}|_{t=0}\exp(Y)\exp(t(Ad(\exp(-Y))(A))_{\fn})=\\
&\frac{d}{dt}|_{t=0}\exp(tAd(\exp(Y))(Ad(\exp(-Y))(A))_{\fn})\exp(Y)
\end{align*}
to obtain a formulas for the infinitesimal CR automorphisms $A_i\in \fg_i$:
\begin{align*}
&\frac{d}{dt}|_{t=0}g(tA_{i},Y), {\rm \ for\ }i<-1\\
&\frac{d}{dt}|_{t=0}g(tAd(\exp(Y))(Ad(\exp(-Y))(A_i))_{\fn},Y), {\rm \ for\ }i\geq -1\\
\end{align*}
where $g(Z,W):=Z+\sum_{n=1}^{\infty}\frac{(-1)^{n}}{n+1}\sum_{s_i>0}\frac{ad(W)^{s_1+\dots+s_{n}}(Z)}{\prod_{i=1}^{n} s_i!}$ is the part of $Z+W+f(Z,W)$ linear in $Z$.
\end{proof}

\section{Equivalence problem for the regularly 2--nondegenerate CR geometries with simple models}\label{sec4}

In order to finish the proof of Theorem \ref{main}, we need to show that $dim(\fg)$ bounds the dimension of Lie algebra of infinitesimal CR automorphisms of all regularly 2--nondegenerate CR geometries with the same second--order Levi--Tanaka algebra $(\fg_-,I,\fk)$ as the model $G/H$. To do this, we solve the equivalence problem, i.e., we prove the Theorem \ref{abspar}.

One direction in the proof of Theorem \ref{abspar} is simple. We just need to pick normalization conditions that ensure that the projections of $\om^{-1}(\fg_{-1}\oplus \fk)$ at all points of $\ba$ determine a regularly 2--nondegenerate CR geometry on $M$ with the second--order Levi--Tanaka algebra $(\fg_-,I,\fk)$. Then we obtain a functor from the category of absolute parallelisms in Theorem \ref{abspar} to the category of 2--nondegenerate CR geometries with second--order Levi--Tanaka algebra $(\fg_-,I,\fk)$.

To prove the converse direction, we need to provide a construction of a unique $\om$ for each regularly 2--nondegenerate CR geometry $M$ with second--order Levi--Tanaka algebra $(\fg_-,I,\fk)$ and the lifts of CR morphisms to morphisms of the absolute parallelisms. In fact, we consider two constructions of $\om$ that use the same normalization conditions:

\begin{itemize}
\item In Sections \ref{sec42}, we present a technical construction of $\om$ that proves the Theorem \ref{abspar}. Part of this construction uses the Tanaka's prolongation theory from \cite{Ta70} and we avoid as many technicalities as possible because we also present a direct construction of $\om$.
\item In Sections \ref{sec43}, we present a direct construction of $\om$, the normalization conditions and construction of the lifts of CR morphisms. We remark that if we would like to use this construction to prove the Theorem \ref{abspar}, then we would need to show that the normalization conditions can be always satisfied and that they provide unique $\om$ and that the lifts of CR morphisms exist and preserve $\om$. This is hard to check directly and for this reason, we use the first construction to prove the Theorem \ref{abspar}.
\end{itemize}

The starting point for both of the constructions of $\om$ is the graded frame bundle $(\ba_0\to M,\theta),$ where $\ba_0\to M$ is the bundle of graded isomorphisms of the second--order Levi--Tanaka algebras at points of $M$ with the second--order Levi--Tanaka algebra $(\fg_-,I,\fk)$ and $\theta: T\ba\to \fg_-\oplus \fk$ is the natural soldering form provided by these isomorphisms.

The next step in both of the construction requires the following information from the theory of parabolic geometries.

\begin{lem*}\label{knf}
Suppose that $G,P,H,\fg_{a,b}$ are one of the cases from our classification in Tables \ref{realclasA}, \ref{realclasB}. Then

\begin{enumerate}
\item If $\fg_-$ is not a Heisenberg algebra, then $\fg$ is the Tanaka prolongation of $\fg_-\oplus \fg_0.$
\item If $\fg_-$ is a Heisenberg algebra, then $\fg$ is the Tanaka prolongation of $\fg_-\oplus \fg_0\oplus \fg_1,$ where $\fg_-\oplus \fg_0\oplus\fg_1$ is the maximal $G_0$--invariant subspace of the first prolongation of $\fg_-\oplus \fg_0$ consisting of maps preserving $I$ on $\fg_{-1}\oplus \fk$.
\item The intersection $H\cap G_0$ is reductive and coincides with the reduction of the group $Aut_0(\fg_-)$ to the subgroup $G_{0,I}$ of elements preserving $I$ on $\fg_{-1}\oplus \fk$ and $\fk$, i.e., it is the structure group of the graded frame bundle $\ba_0$.
\item If $\fg_-\otimes \mathbb{C}\oplus \fg_{0,-1}$ does not correspond to the case $\mathfrak{sp}(2n,\mathbb{C})\otimes \mathbb{C},\Sigma_1=\{1\},\Sigma_2=\{l\}$, then $\fg\otimes \mathbb{C}$ is the Tanaka prolongation of $\fg_-\otimes \mathbb{C}\oplus \fg_{0,-1}$.
\item If $\fg_-\otimes \mathbb{C}\oplus \fg_{0,-1}$ corresponds to the case $\mathfrak{sp}(2n,\mathbb{C})\otimes \mathbb{C},\Sigma_1=\{1\},\Sigma_2=\{l\}$, then $\fg\otimes \mathbb{C}$ is the Tanaka prolongation of $\fg_-\otimes \mathbb{C}\oplus \fg_{0,-1}\oplus \fg_{0,0}$.
\item The $0,1$--homogeneity parts of the cohomologies $H^1(\fg_-\otimes \mathbb{C}\oplus \fg_{0,-1},\fg\otimes \mathbb{C}),H^2(\fg_-\otimes \mathbb{C}\oplus \fg_{0,-1},\fg\otimes \mathbb{C})$ vanish.
\end{enumerate}
\end{lem*}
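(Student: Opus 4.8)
The plan is to verify the six assertions by running through the finitely many entries of Tables~\ref{realclasA} and~\ref{realclasB}. Since (1)--(2) and (4)--(6) concern only Tanaka prolongations and Lie-algebra cohomology of the complexified data, while (3) concerns the reductive pair $(\fg_0,\fg_{0,I})$ together with the symbol --- all of which are controlled by the underlying complex bigrading --- it suffices to argue over the entries of Table~\ref{class}.

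For (1) and (2) the point is that $\fg$ is tautologically contained in the Tanaka prolongation of its non-positive part $\fg_-\oplus\fg_0$ for the $\Sigma_1$-grading, since the positive part $\fg_+$ satisfies the defining condition of the prolongation; what must be shown is the reverse inclusion. This is the prolongation rigidity of Tanaka and Yamaguchi for parabolic gradings, cf.\ \cite{Ta70,parabook}, which gives equality for every parabolic $(\fg,\fp_{\Sigma_1})$ outside an explicit short list; a check against this list shows that among the entries of Table~\ref{class} the equality $\mathfrak g(\fg_-,\fg_0)=\fg$ holds exactly when $\fg_-$ is not a Heisenberg algebra, which is (1). When $\fg_-$ is Heisenberg (the hypersurface, i.e.\ contact, cases) the prolongation $\mathfrak g(\fg_-,\fg_0)$ is strictly larger --- it is the contact-projective type extension --- but its first prolongation still contains $\fg_1$, and imposing that a first-prolongation element act on $\fg_{-1}\oplus\fk$ by a map commuting with $I$ (with $I$ extended to $\fk$ as in Lemma~\ref{LTa}(4)) cuts the first prolongation down to exactly $\fg_1$; a direct computation then shows the prolongation of $\fg_-\oplus\fg_0\oplus\fg_1$ stabilises at $\fg$, which is (2). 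I expect these contact cases --- verifying that the $I$-adapted first prolongation is precisely $\fg_1$ and no larger, and that the prolongation then closes up at $\fg$ --- together with the combinatorial check in (6), to be the main work.

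Assertions (4) and (5) are the same statement transported to the complexified ambient picture. As observed in the bulleted remarks preceding Table~\ref{class}, $(\fg\otimes\C,\fp_{\Sigma_1\cup\Sigma_2})$ is one of the Levi-nondegenerate CR models of Theorem~\ref{nondegclas}; with the grading induced by $E_1+E_2$ its nilradical is precisely $\fn':=\fg_-\otimes\C\oplus\fg_{0,-1}$ and its reductive Levi part is $\fg_{0,0}$. Again $\fg\otimes\C$ is contained in the Tanaka prolongation of $\fn'$, and the reverse inclusion is Yamaguchi's theorem; running through Table~\ref{class}, the unique exceptional entry is $\mathfrak{sp}(2n,\C)$ with $\Sigma_1=\{\alpha_1\}$ and $\Sigma_2=\{\alpha_n\}$, where $\fn'$ still carries a residual contact-type structure, so $\mathfrak{der}_0(\fn')\supsetneq\fg_{0,0}$ and the full prolongation overshoots; for that entry one instead prescribes the degree-zero part to be $\fg_{0,0}$ and gets that the Tanaka prolongation of $\fn'\oplus\fg_{0,0}$ is $\fg\otimes\C$, which is (5), while (4) is the non-exceptional case. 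For (3), Proposition~\ref{neces} already identifies $\fg_{0,I}=\fg_0\cap\fh$ as the isotropy algebra of the (pseudo-)Hermitian symmetric pair $(\fg_0,\fg_{0,I})$, hence as the fixed-point subalgebra of an involution of the reductive algebra $\fg_0$, which is therefore reductive; the adjoint action of $\fg_0$ on the symbol $\fg_-$ is by grading-preserving derivations and is faithful by effectivity, and by the construction of $I$ and $\fk$ from the bigrading in Lemma~\ref{absmod} the subgroup of $\Aut_0(\fg_-)$ preserving both $\fk$ and $I$ on $\fg_{-1}\oplus\fk$ is exactly $G_{0,I}$, so $H\cap G_0=G_{0,I}$ is the structure group of $\ba_0$; one cross-checks the effective quotient against the column $\fg_{0,I}'$ in the tables.

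Finally, for (6) one computes $H^k(\fn',\fg\otimes\C)$ by Kostant's version of the Bott--Borel--Weil theorem, cf.\ \cite{parabook}: as a $\fg_{0,0}$-module it is the direct sum of the irreducibles with extreme weights $w\cdot\lambda$, where $\lambda$ is the highest root of $\fg\otimes\C$ and $w$ runs over the Hasse set of $\fp_{\Sigma_1\cup\Sigma_2}$ with $\ell(w)=k$, and the homogeneities occurring in $H^k$ are then determined combinatorially by this Hasse set (explicitly, by the $E_1+E_2$-weights of the $w\cdot\lambda$). A direct check over the entries of Table~\ref{class}, using the relevant Hasse diagrams, shows that for $k=1$ and $k=2$ every such homogeneity is at least $2$; in particular the homogeneity $0$ and $1$ parts of $H^1(\fn',\fg\otimes\C)$ and $H^2(\fn',\fg\otimes\C)$ vanish, which is (6). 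This last check, while routine, is the most laborious part, on a par with the contact-case analysis in (2).
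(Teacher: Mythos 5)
Your handling of claims (1)--(5) is essentially the paper's own route: prolongation rigidity from \cite[Proposition 4.3.1]{parabook}/Yamaguchi run over the entries of Table \ref{class}, with the contact (Heisenberg) cases and the single exceptional complexified case $\mathfrak{sp}(2n,\C)$, $\Sigma_1=\{\alpha_1\}$, $\Sigma_2=\{\alpha_n\}$ handled by identifying the extra cohomology module and discarding it because it does not consist of $I$--preserving maps. (Two small remarks: the prolongation of $\fg_-\oplus\fg_0$ is strictly larger than $\fg$ only in the $\mathfrak{sp}$ contact--projective entry, not for all Heisenberg entries, though this does not hurt your argument; and in (3) the real content is that no graded automorphism outside $H\cap G_0$ preserves $\fk$ and $I$, which is exactly the degree--zero rigidity you invoke in (4)--(5) plus the $I$--argument in the exceptional case --- you assert it "by construction", but the ingredients are on your page.)

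The genuine gap is in (6). In this paper "homogeneity $k,l$" is a \emph{bidegree}: see Section \ref{sec43}, where the components are sorted "according to the homogeneity $k,l=-i+a,-j+b$", and the object $\kappa_{0,1}$ in Proposition \ref{parccon}, which takes values in $\sum_{b\leq 0}\fg_{a,b}^*\wedge\fg_{0,-1}^*\otimes\fg_{a,b}$, i.e.\ has $E_1$--degree $0$ and $E_2$--degree $1$. So claim (6) asserts vanishing only of the bidegree--$(0,1)$ components of $H^1$ and $H^2$ of $\fg_-\otimes\C\oplus\fg_{0,-1}$ with values in $\fg\otimes\C$ --- precisely the components governing existence and uniqueness of the partial connection on the Levi kernel, whose freedom is a map $\fg_{0,-1}\to\fg_{0,0}$ of bidegree $(0,1)$. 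You instead read it as "total homogeneities $0$ and $1$" and claim the Kostant/Hasse check yields that every homogeneity occurring in $H^1,H^2$ is at least $2$. That check cannot come out as you state: by your own treatment of (5), the $\mathfrak{sp}(2n,\C)$ entry has $\mathrm{der}_0$ of the symbol strictly larger than $\fg_{0,0}$, i.e.\ a nonzero class in $H^1$ of total homogeneity $0$ (its bidegree is $(1,-1)$, which is why it is harmless for the lemma); and nonzero curvature in total homogeneity $1$ does occur --- in the worked example of Section \ref{exam} the fundamental invariant $W$ sits in bidegree $(1,0)$. So under your reading claim (6) would be false, and under the correct reading your verification does not address the right components. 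The fix is to keep your Kostant computation but record the full $(E_1,E_2)$--bidegree of each $w\cdot\lambda$ with $\ell(w)=1,2$ and check that bidegree $(0,1)$ never occurs; for $H^2$ the paper simply quotes the explicit computation of the homogeneity--$(0,1)$ components in \cite{GZ16} rather than redoing the Hasse--diagram analysis.
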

\begin{proof}
If we compare our classification with \cite[Proposition 4.3.1]{parabook}, then we get the first two claims. In the case that $\fg_-$ is the Heisenberg algebra, then the additional maps in the first prolongation of $\fg_-\oplus \fg_0$ are given by the cohomology $H^1(\fg_-,\fg)$ of homogeneity $1$. We can compute $H^1(\fg_-,\fg)$ (see \cite[Section 3.3]{parabook}) and see that it is an irreducible $G_0$--module with lowest weight in $\fg_{-1,0}^*\otimes \fg_{0,1}$, which is a map that does not preserve $I$ on $\fg_{-1}\oplus \fk$.

Another consequence of  \cite[Proposition 4.3.1]{parabook} is that only in the case $\mathfrak{sp}(2n,\mathbb{C})\otimes \mathbb{C},\Sigma_1=\{1\},\Sigma_2=\{l\}$ the Claim (3) does not follow directly. However, in this case the cohomology $H^1(\fg_-\otimes \mathbb{C}\oplus \fg_{0,-1},\fg\otimes \mathbb{C})$ of homogeneity $0$ is again a irreducible module with lowest weight in $\fg_{-1,0}^*\otimes \fg_{0,1}$, which is a map that does not preserve $I$ on $\fg_{-1}\oplus \fk$. This proves the Claims (4) and (5).

Moreover, this means that all first cohomologies $H^1(\fg_-\otimes \mathbb{C}\oplus \fg_{0,-1},\fg\otimes \mathbb{C})$ of homogeneity $0,1$ vanish. The second cohomologies $H^2(\fg_-\otimes \mathbb{C}\oplus \fg_{0,-1},\fg\otimes \mathbb{C})$ of homogeneity $0,1$ were explicitly computed in \cite{GZ16} and we see that they vanish in the cases from our classification in Table \ref{class}.
\end{proof}

\begin{rem*}\label{remsem}
The claims (1)--(4) of Lemma \ref{knf} can be also easily established without assumption $\fg$ is real simple, it is sufficient to assume that $\fg$ is semisimple. However, the cohomologies from claims (5)--(6) need to be computed, before we know, whether we can continue with our generalization of Tanaka's prolongation procedure. Since there are too many possible situations for semisimple $\fg$, we will do it in a separate article. Nevertheless, the analogies of claims (1) and (2) answer, when the dimension of $\fg$ is an algebraic bound for the maximally symmetric model. Indeed, there is always projection $G/H\to G/P$ and the CR automorphisms descend to automorphisms of the parabolic geometry on $G/P$ and claims (1) and (2) of Lemma \ref{knf} would imply the maximality.
\end{rem*}

\subsection{A construction using a distinguished partial connection on the Levi kernel}\label{sec42}

In this Section, we finished the proof of the Theorem \ref{abspar} in two steps: Firstly, we construct an infinitesimal $\fg_0$--structure on a regular filtered manifold with symbol $\fg_-$ on the graded frame bundle $(\ba_0\to M,\theta)$. Let us recall that an infinitesimal $\fg_0$--structure on a regular filtered manifold with symbol $\fg_-$ on the graded frame bundle $(\ba_0\to M,\theta)$ is a restriction $\theta_r$ of $\theta$ to preimige of $TM/\mathcal{K}$ together with the $\fg_0$--valued form $\theta_0$ on the preimage of $\mathcal{K}$ such that any extension of $\theta_r+\theta_0$ to a Cartan connection of type $(\exp(\fg_-)\rtimes G_0,G_{0,I})$ by forms of positive homogeneity w.r.t. to the grading $\fg_-\oplus \fg_0$ has curvature of positive homogeneity (w.r.t. to the grading $\fg_-\oplus \fg_0$). Such extensions exist due to Claim (3) of Lemma \ref{knf}.

In terms of the Tanaka's prolongation theory from \cite{Ta70} this is an infinitesimal version of pseudo--$G_0$--structure of type $\fg_-$ from \cite[Definitions 7.2 and 7.5]{Ta70}, where instead of the action of $G_0$ on $\ba_0$ there is only an infinitesimal $\fg_0$--action on $T\ba_0$. Nevertheless, this is enough for the construction of Tanaka's prolongation from \cite[Section 9]{Ta70}, because the fact that the action of $G_0$ is globally defined is not required in the construction. Therefore, the further steps of the construction of $\om$ are done as in \cite[Section 9]{Ta70} and we just need to check that we obtain a $H$--fiber bundle $\ba\to M$ with a $\fg$--valued absolute parallelism $\om$.

The $\fg_0$--valued form $\theta_0$ on the preimage of $\mathcal{K}$ is equivalent to a partial connection on the Levi kernel (preserving $I$) and corresponds to a choice of $G_{0,I}$--invariant complement of the vertical bundle in the preimage of $\mathcal{K}$ in $\ba_0$. The existence of such partial connections is given again by Claim (3) of Lemma \ref{knf}. We show that we can choose a unique partial connection providing an infinitesimal $\fg_0$--structure.

\begin{prop*}\label{parccon}
There is a unique partial connection on the Levi kernel such that the form $\theta_0: T\ba_0\to \fg_0$ defines an  infinitesimal $\fg_0$--structure on a regular filtered manifold with symbol $\fg_-$ on the graded frame bundle $(\ba_0\to M,\theta)$. This connection and $\theta_0$ are preserved by all CR morphisms.
\end{prop*}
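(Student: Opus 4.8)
The plan is to exploit the fact, guaranteed by Claim (3) of Lemma \ref{knf}, that the structure group $H\cap G_0=G_{0,I}$ of $\ba_0$ is reductive and acts on $\fg_0$ preserving the decomposition $\fg_0=\fg_{0,I}\oplus \fk$ (with $\fk=\fg_0\cap(\fg_{0,-1}\oplus\fg_{0,1})$). A partial connection on the Levi kernel preserving $I$ is the same datum as a $G_{0,I}$--equivariant splitting $\theta_0:(\text{preimage of }\mathcal{K})\to\fg_0$ of the short exact sequence whose kernel is the vertical bundle; the space of such splittings is an affine space over $\Omega^1_{\mathcal{K},\text{hor}}(\ba_0,\fg_0)^{G_{0,I}}$, the $G_{0,I}$--equivariant $\fg_0$--valued horizontal $1$--forms on the $\mathcal{K}$--preimage. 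What I would do first is fix an arbitrary such $\theta_0$ (it exists by reductivity), extend $\theta_r+\theta_0$ arbitrarily to a candidate Cartan connection of type $(\exp(\fg_-)\rtimes G_0,G_{0,I})$ by forms of positive homogeneity — such extensions exist by Claim (3) — and compute the homogeneity $\leq 0$ part of the curvature. This lands in $C^2(\fg_-\oplus\fg_{0,-1},\fg)$ in the appropriate degrees (one must also track the $\fk$--components carefully, as warned in the bullets after Theorem \ref{abspar}).

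The next step is the normalization: I would change $\theta_0$ by a horizontal $1$--form $\psi$ and ask that the new low--homogeneity curvature lies in a fixed $G_{0,I}$--invariant normalization subspace complementary to the image of the Lie--algebra cohomology differential $\partial$ acting on $\fg_{-1}^*\otimes\fg_0$ (restricted to the relevant pieces). The key algebraic input is Claim (6) of Lemma \ref{knf}: the $0$- and $1$-homogeneity components of $H^1(\fg_-\otimes\mathbb{C}\oplus\fg_{0,-1},\fg\otimes\mathbb{C})$ and of the corresponding $H^2$ vanish. Vanishing of the relevant $H^1$ makes $\partial$ injective on the space of admissible $\psi$ (so the normalized $\theta_0$ is \emph{unique}), while vanishing of the relevant $H^2$ — equivalently surjectivity of $\partial$ onto the low--homogeneity curvature values, together with the fact that the essential torsion in $\fk$ cannot be killed but \emph{does} vanish for the infinitesimal $\fg_0$--structure by the definition recalled just above — shows the normalized curvature has strictly positive homogeneity, i.e. that $\theta_0$ \emph{does} define an infinitesimal $\fg_0$--structure. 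One should be careful that everything is done over $\mathbb{R}$ by taking $G_{0,I}$--invariants of the complexified statement, which is legitimate since $G_{0,I}$ is reductive and $\mathbb{C}$ is the algebraic closure.

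Finally, naturality: since $\theta_0$ is characterized by the cohomological normalization condition on the curvature, and the graded frame bundle $(\ba_0,\theta)$ together with $\theta_r$ is manifestly functorial under CR morphisms (they are built from the soldering form alone), any CR morphism pulls the normalized $\theta_0$ back to a splitting satisfying the same condition; by uniqueness it equals $\theta_0$. The same argument applies to the partial connection, which is just the horizontal distribution $\ker\theta_0$. I expect the main obstacle to be the careful bookkeeping in the first step — identifying exactly which homogeneity components of the curvature are affected by the choice of $\psi$ and matching them to the cohomology groups in Claim (6), while keeping track of the troublesome $\fk\subset\fg_0$ directions that are \emph{not} to be normalized to zero — rather than any single hard conceptual point; the cohomology vanishing in Lemma \ref{knf} does the real work once the setup is correct.
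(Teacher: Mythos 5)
Your overall strategy is the same as the paper's: modify the partial connection (equivalently $\theta_0$) so that the low--homogeneity curvature of an extension of $\theta_r+\theta_0$ is killed, getting existence from the vanishing of the homogeneity $0,1$ part of $H^2(\fg_-\otimes\mathbb{C}\oplus\fg_{0,-1},\fg\otimes\mathbb{C})$, uniqueness from the vanishing of the corresponding $H^1$, and naturality from uniqueness. But there is a genuine gap at the existence step. You write that vanishing of the relevant $H^2$ is ``equivalently surjectivity of $\partial$ onto the low--homogeneity curvature values.'' That equivalence is false as stated: $H^2=0$ in the relevant bidegree only says $\ker\partial=\Imm\,\partial$ there, so the curvature component can be absorbed by a change of $\theta_0$ only after you have proved that it is a \emph{cocycle}. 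Nothing in your argument establishes closedness, and this is precisely where the real work in the proof lies.

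Concretely, the paper first reduces (using the structure of the second--order Levi--Tanaka algebra and the Jacobi identity) to killing the single component $\kappa_{0,1}$ with values in $\fg_{-1,0}^*\wedge\fg_{0,1}^*\otimes\fg_{-1,0}$ (its conjugate being the other offending piece), and then proves that $\kappa_{0,1}$ is $\partial$--closed by running the Bianchi identity as in the proof of the corresponding statement for parabolic geometries. The step that does not carry over verbatim from the parabolic setting is the quadratic term $\kappa(\kappa(s_1,s_2),s_3)$: here $\kappa(s_1,s_2)$ can have entries in $\fg_{0,1}$, so one needs a separate argument that this term has no homogeneity $0,1$ part, and that argument uses the 2--nondegeneracy hypothesis (condition (1) of the definition of regular 2--nondegeneracy, i.e.\ injectivity of $\iota_x$) applied to inputs $s_1,s_2$ with values in $\fg_{0,-1}$. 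This is not the ``careful bookkeeping'' you defer at the end; it is the place where the geometric hypothesis enters, and without it the cohomological machinery you invoke cannot be applied to conclude exactness of $\kappa_{0,1}$. Your uniqueness and naturality arguments, on the other hand, match the paper's.
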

\begin{proof}
We extend the form $\theta_0$ to a Cartan connection $\om_0$ on $\ba_0$ that as a graded isomorphism $T_u\ba_0\to \fg_-\oplus \fg_0$ coincides with $\theta_0(u)$ and $\theta_r(u)$ for all $u\in \ba_0$ and we want to show that there is $\theta_0$ such that the curvature (function) $\kappa: \ba_0\to \wedge^2(\fg_-\oplus \fg_0)^*\otimes \fg_-\oplus \fg_0$ of $\om_0$ has only components of positive homogeneity w.r.t. to the grading of $\fg_-\oplus \fg_0$.

We complexify $\om_0$ and decompose everything according the bigrading $\fg_{a,b}$. We can conclude from the construction of the second--order Levi--Tanaka algebra that the components of nonpositive homogeneity w.r.t. to the grading of $\fg_-\oplus \fg_0$ are determined (via Jacobi identity) by components $\ba_0\to\fg_{-1,-1}^*\wedge \fg_{0,-1}^*\otimes \fg_{-1,-1}$ and $\ba_0\to\fg_{-1,0}^*\wedge \fg_{0,1}^*\otimes \fg_{-1,0}$ of the curvature of the complexification of $\om_0$. Therefore, we need to show that there is unique choice of a partial connection on the Levi kernel for which one of these components vanish (because they are conjugated). 

We prove that $\kappa_{0,1}: \ba_0\to \sum_{b\leq 0}\fg_{a,b}^*\wedge \fg_{0,-1}^*\otimes \fg_{a,b}$ is element of the cohomology $H^2(\fg_-\otimes \mathbb{C}\oplus \fg_{0,-1},\fg\otimes \mathbb{C}).$ This can be done by following the proof of \cite[Theorem 3.1.12]{parabook} that is using the Bianchy identity
$$\sum_{cycl}[s_1,\kappa(s_2,s_3)]-\kappa([s_1,s_2],s_3)+\kappa(\kappa(s_1,s_2),s_3)-(D_{s_1}\kappa)(s_2,s_3)=0$$
from \cite[Proposition 1.5.9]{parabook}, where $s_1,s_2,s_3:\ba\to (\fg_-\oplus \fg_0)\otimes \mathbb{C}$ are $G_{0,I}$--equivariant maps and $D$ is the fundamental derivative.

In particular, if we follow the proof of \cite[Theorem 3.1.12]{parabook}, then we see that we need to show that the homogeneity $0,1$--parts of the Bianchy identity coincide with the algebraic differential in $(\fg_-\oplus \fg_0)\otimes \mathbb{C}$. The only difference in the proof of this fact is that $\kappa(s_1,s_2)$ can have entries in $\fg_{0,1}$ and thus we need different argument to show that $\kappa(\kappa(s_1,s_2),s_3)$ does not have homogeneity $0,1$--part. In this case, $\kappa_{0,-1}$ is the lowest homogeneity component of the first $\kappa$ in the expression thus $\kappa(\kappa(s_1,s_2),s_3)$ has nontrivial homogeneity $0,1$--part only for $s_1,s_2$ with values in $\fg_{0,-1}$. This is a contradiction with assumption (1) of Definition \ref{def2dg}. Thus $\kappa_{0,1}$ is closed and thus exact by Claim (6) of Lemma \ref{knf}. Therefore, the claimed partial connection exists.

The uniqueness  of the claimed partial connection follows from the vanishing of the cohomology $H^1(\fg_-\otimes \mathbb{C}\oplus \fg_{0,-1},\fg\otimes \mathbb{C})$ of homogeneity $0,1$ from Claim (6) of Lemma \ref{knf}. The uniqueness implies that this connection and $\theta_0$ are preserved by all (lifts of) CR morphisms on $\ba_0$.
\end{proof}

We can proceed with the construction of the absolute parallelism using Tanaka's prolongation theory from \cite{Ta70}. 

If $\fg_-$ is not a Heisenberg algebra, then the Claim (1)  of Lemma \ref{knf} ensures that the absolute parallelism constructed according to \cite{Ta70} proves the Theorem \ref{abspar}.

If $\fg_-$ is a Heisenberg algebra, then the Claim (2)  of Lemma \ref{knf} shows that we need an additional reduction to $\fg_1$. After the first prolongation according  to \cite{Ta70}, we obtain a bundle $\ba_1'$ over $\ba_0$ and the points of the fibers together with the components of $\om$ constructed in this step provide isomorphisms of $T^{-1}M$ at the underlying point with $\fg_-\oplus \fk$. We can make a reduction of $\ba_1'$ to points providing complex linear isomorphisms and obtain a reduction $\tilde \ba_1\subset \ba_1'$ to certain structure group $\exp(\tilde \fg_1)$. This implies that the complex antilinear part of the torsion $\fg_{-1}^*\wedge \fk^*\otimes \fk$ in the previous prolongation step depends algebraically on the point in the fiber of the bundle $\tilde \ba_1\to \ba_0$ and corresponds via the structure equation to part of $\tilde \fg_1$ such that $[\tilde \fg_1,\fk]$  has nontrivial component outside $\tilde \fg_1$. If we normalize this torsion to $0$, then we are left (due reductivity) with the maximal $G_0$--invariant subspace of $\tilde \fg_1$, i.e., $\fg_1$. Therefore, the Claim (2) of Lemma \ref{knf} ensures that the absolute parallelism constructed according to \cite{Ta70} (continuing after the reduction to $\fg_1$) proves the Theorem \ref{abspar}.

\subsection{Explicit construction of the absolute parallelism and the normalization conditions}\label{sec43}

Since $G_{0,I}$ is reductive and the normalization conditions can be chosen $G_{0,I}$--invariant, we will work in this section with $G_{0,I}$--equivariant functions on the graded frame bundle $\ba_0$ with values in representations of $G_{0,I}$.

According to \cite[Proposition 5.1.1]{parabook}, there is a global smooth  $G_{0,I}$--equivariant section $\si: \ba_0\to \ba$ and the space of these section is an affine space modeled on the space of sections $(TM/\mathcal{K})^*=_{\si}\ba_0\times_{G_{0,I}}\fp_+$, where $=_{\si}$ means that the identification is as in the case of parabolic geometries dependent on $\si$. Moreover, $\si^*\om$ decomposes into $G_{0,I}$--invariant one forms on $\ba_0$ with values in $G_{0,I}$--submodules of $\fg$. If $\om$ is Cartan connection and $\hat \si=\si\circ r^{\phi}$ is a section $\ba_0\to \ba$ given by a smooth $G_{0,I}$--invariant function $\phi: \ba_0\to \exp(\fp_+)$, then
$$\hat \si^*\om=Ad(\phi^{-1})(\si^*\om)+\delta \phi,$$
where $\delta \phi$ is the left logarithmic derivative of  $\phi: \ba_0\to \exp(\fp_+)$. In general, there is the difference $$Ad(\phi^{-1})\Theta(\phi):=\hat \si^*\om-(Ad(\phi^{-1})(\si^*\om)+\delta \phi),$$ which is a family of $G_{0,I}$--invariant one forms on $\ba_0$ with values in $G_{0,I}$--submodules of $\fg$ parametrized by the function $\phi$.

For the pullback $\si^*(d\om+[\om,\om])$ of the structure equations of $\om$, we get that 
$$\hat \si^*(d\om+[\om,\om])-Ad(\phi^{-1})(\si^*(d\om+[\om,\om]))=Ad(\phi^{-1})(d\Theta+[\Theta,\Theta]+[\si^*\om,\Theta]+[\Theta,\si^*\om]).$$
Therefore, if we normalize parts of $\si^*(d\om+[\om,\om])$ that are not $Ad(\phi^{-1})$--invariant, we obtain nontrivial correction terms $\Theta(\phi)$.

The component of complexification of $\si^*\om$ in $\fg_{a,b}$ can be written as $\om^{i,j}_{a,b}\theta_{i,j}$, where $\theta_{i,j}$ is complexification of $\theta$ and $\theta_{0,0}$ is determined by the Maurer--Cartan form of $G_{0,I}$ and by killing the homogeneity $0,1$ part of the pullback $\si^*(d\om+[\om,\om])$ according to Proposition \ref{parccon}. We need to choose normalization conditions that assign unique value to the $G_{0,I}$--equivariant maps $\om^{i,j}_{a,b}: \fg_{i,j}\to \fg_{a,b}$. We go through the components according to the homogeneity $k,l=-i+a,-j+b$ by increasing value of $k+l$:

\begin{enumerate}
\item[$k<0$] Since we are on filtered manifold, there are no such $\om^{i,j}_{a,b}$.
\item[$k=l=0$] Such $\om^{i,j}_{i,j}$ are the identity.
\item[$k=0,l\neq 0$]  Such $\om^{i,j}_{a,b}$ vanish by definition $\theta$ and construction of $\theta_{0,0}$.
\item[$k>0,\om_{k,l+1}^{0,1}$] These are conjugated to $\om_{k,l'}^{0,-1}$ for some $l'+1>0$. 
\item[$\om_{0,-1}^{-1,0},\om_{0,1}^{-1,-1}$]  Vanish, because are not compatible with $I$ on $\fg_{-1}\oplus \fk$.
\item[remaining]   We can normalize the remaining components according to Claims (4) and (5) of Lemma \ref{knf} as in the Tanaka's prolongation theory \cite{Ta70}. In particular, we can normalize them by a choice of the section $\si$ and by killing parts of pullbacks $\si^*(d\om+[\om,\om])$ that have entries in $\fg_{-1,0}$ or $\fg_{0,-1}$.
\end{enumerate}

Let us emphasize that the Tanaka's prolongation theory \cite{Ta70} does not provide the normalization conditions uniquely. It just ensures their existence. Moreover, it is not clear, whether $Ad(\phi^{-1})$--invariant normalization conditions exist, in general. We illustrate this on example in the following section.

The component $Re((\si^*\om)_{0,0})$ of $\si^*\om$ in $\fg_{0,I}$ is a principal connection on the graded frame bundle $\ba_0$ and for each CR morphism $\tau: \ba\to \ba'$ such that $\tau^*\om'=\om$, there is a section $\hat \si:\ba_0'\to\ba'$ such that $\tau_0^*Re((\hat \si^*\om')_{0,0})=Re((\si^*\om)_{0,0})$, where $\tau_0$ is the corresponding lift of the CR morphism to the graded frame bundle. Since the class of connections $Re((\si^*\om)_{0,0})$ is as in the case of parabolic geometries in bijective correspondence with the class of section $\si: \ba_0\to \ba$, this provides the lift of $\tau_0$ to $\tau$.

\subsection{Example --  Absolute parallelism for uniformly 2--nondegenerate hypersurfaces in $\mathbb{C}^3$}\label{exam}

Let us carry out our prolongation procedure for the case of uniformly 2--nondegenerate hypersurfaces $M$ in $\mathbb{C}^3$. This will demonstrate our results and allow them to be compared with the other prolongation procedures known in this case, see \cite{Eb06,IZ13,MS14,MS15,Poc13}. 

In this case, $\fg=\frak{sp}(4,\mathbb{R})$, $\Sigma_1=\{1\}$, i.e., $\fg_-$ is the Heisenberg algebra, $\Sigma_2=\{2\}$, i.e., $(H\cap G_0)/G_{0,I}=Gl(2,\mathbb{R})/CO(2).$ We consider $\frak{sp}(4,\mathbb{R})$ as the following real form of $\frak{sp}(4,\mathbb{C})$:
$$
\left[ \begin{array}{cccc}
e&\bar m& m& g\\
\bar l&f&\bar k& m\\
l& k& -f & -\bar m\\
j& l& -\bar l& -e
\end{array}\right]
\subset 
\left[ \begin{array}{cccc}
\fg'_{0,0}&\fg_{1,0}& \fg_{1,1}& \fg_{2,1}\\
\fg_{-1,0}&\fg_{0,0}&\fg_{0,1}& *\\
\fg_{-1,-1}& \fg_{0,-1}&* & *\\
\fg_{-2,-1}& *& *& *
\end{array}\right],
$$
where $e$ is real, $f,g,j$ are purely imaginary, $k,l,m$ are complex, $\fg_{i,j}$ indicate the bigrading of  $\frak{sp}(4,\mathbb{C})$ and $*$ means that entry is linearly dependent on the other entries (as in the first matrix). Let us emphasize that we use $'$ to distinguish between the two linearly independent parts of $\fg_{0,0}$.

Our choice of normalization conditions implies, that without loss of generality, we can choose a local section $s: M\to \ba_0$, i.e., local complex frame of $T^{-1}M/\mathcal{K}$. Locally, we work with the complexification of the pullback $s^*\si^*\om$, which is a $\frak{sp}(4,\mathbb{C})$--valued one form on $M$. These pullbacks extend $G_{0,I}$--equivariantly on local trivializations of $\ba_0\to M$ and can be glued to obtain $\si^*\om$ on $\ba_0$.

We will start with the following complex coframe $(j,l,\bar l,k,\bar k)$ that up to a multiple coincides with the coframe from \cite{Poc13} that defines a local section $s: M\to \ba_0$:
\begin{align*}
dj &=P j\wedge l+\bar P j\wedge \bar l-K_l j\wedge k-\bar K_{\bar l}j\wedge \bar k+2l\wedge \bar l,\\
 dl &= -K_j j\wedge k-K_l l\wedge k-K_{\bar l} \bar l\wedge k,\\
  d\bar l&= -\bar K_j j\wedge \bar k-\bar K_{\bar l} \bar l\wedge \bar k-\bar K_{l} l\wedge\bar k,\\
  dk &= 0, d\bar k = 0,
  \end{align*}
 where $K,P$ are two complex valued functions on $M$ (the formula for $K,P$ in terms of the defining equation for $M$ can be found in \cite{Poc13}) and $K_{i_1;i_2;\dots ;i_s}$ is a $s$--tuple of Lie derivatives of $K$ in directions $i_1,\dots , i_s$ w.r.t. to a nonholonomic frame dual to $(j,l,\bar l,k,\bar k)$. We remark that $K_{i_1;i_2;\dots ;i_s}$ depends on the ordering of the Lie derivatives and that in all our formulas bellow, the derivatives were ordered using the Jacobi identity.
 
If we decompose the complexification of the pullback $s^*\theta$ of the soldering form on $\ba_0$ according to the grading, then we obtain that
\begin{align*}
s^*\theta_{-2,-1}=j,\\
s^*\theta_{-1,-1}=l,\\
s^*\theta_{-1,0}=\bar l,\\
s^*\theta_{0,-1}=-K_{\bar l}k,\\
s^*\theta_{-1,0}=-\bar K_{l}\bar k.
\end{align*}
Therefore, the pullback $s^*\si^*\om$ has the following form:

\begin{gather*}
j\left[ \begin{array}{cccc}
0&\om^{-2,-1}_{1,0}& \om^{-2,-1}_{1,1}& \om^{-2,-1}_{2,1}\\
0&\om^{-2,-1}_{0,0}&\om^{-2,-1}_{0,1}& *\\
0& \om^{-2,-1}_{0,-1}&* & *\\
1& 0& 0& 0
\end{array}\right]\\
+l\left[ \begin{array}{cccc}
\om'{}^{-1,-1}_{0,0}&\om^{-1,-1}_{1,0}& \om^{-1,-1}_{1,1}& \om^{-1,-1}_{2,1}\\
0&\om^{-1,-1}_{0,0}&0& *\\
1& \om^{-1,-1}_{0,-1}&* & *\\
0& 0& 0& *
\end{array}\right]+\bar l\left[ \begin{array}{cccc}
\om'{}^{-1,0}_{0,0}&\om^{-1,0}_{1,0}& \om^{-1,0}_{1,1}& \om^{-1,0}_{2,1}\\
1&\om^{-1,0}_{0,0}&\om^{-1,0}_{0,1}& *\\
0& 0&* & *\\
0& 0& 0& *
\end{array}\right]\\
-K_{\bar l}k\left[ \begin{array}{cccc}
-\frac{K_l}{2K_{\bar l}}&\om^{0,-1}_{1,0}& \om^{0,-1}_{1,1}& \om^{0,-1}_{2,1}\\
0&-\frac{K_l}{2K_{\bar l}}&0& *\\
0&1&* & *\\
0& 0& 0& *
\end{array}\right]-\bar K_{l}\bar k\left[ \begin{array}{cccc}
-\frac{\bar K_{\bar l}}{2\bar K_{l}}&\om^{0,1}_{1,0}& \om^{0,1}_{1,1}& \om^{0,1}_{2,1}\\
0&\frac{\bar K_{\bar l}}{2\bar K_{l}}&1& *\\
0& 0&* & *\\
0& 0& 0& *
\end{array}\right]\\
\end{gather*}

Note, that the choice $\om^{-2,-1}_{-1,0}=0,\om^{-2,-1}_{-1,-1}=0,\om'{}_{-1,-1}^{-2,-1}=0$ removes the freedom in choice of the section $\phi$, $\om^{-1,-1}_{0,1}=0$, $\om^{-1,0}_{0,-1}=0$ vanish by definition and the choice of $\om'{}^{0,-1}_{0,0},\om^{0,-1}_{0,0},\om'{}^{0,1}_{0,0},\om^{0,1}_{0,0}$ determines the distinguished connection on the Levi kernel from Proposition \ref{parccon}. Moreover, there are nontrivial relations between the remaining entries of the matrices provided by the fact that we obtained this by complexification that can be deduced by the inclusion $\frak{sp}(4,\mathbb{R})\subset \frak{sp}(4,\mathbb{C})$.

Now, we can start the normalization procedure and determine the pullback $s^*\si^*\om$ and the correction term $\Theta(\phi)$. So we look on components of $R:=s^*\si^*(d\om+[\om,\om])$ and write $R^{a,b}_{c,d;e,f}$ for the component of $R$ in $\fg_{a,b}$ evaluated on the vector fields providing the dual frame to the coframe $$(j,l,\bar l,-K_{\bar l}k+l\om^{-1,-1}_{0,-1}+j\om^{-2,-1}_{0,-1},-\bar K_{l}\bar k+\bar l \om^{-1,0}_{0,1}+j\om^{-2,-1}_{0,1} ),$$ where $c,d$ and $e,f$ indicate the component of $s^*\si^*\om$ providing the nonvanishing (on the vector field) part of the coframe. We want to normalize suitable components $R^{a,b}_{c,d;-1,0}$ and $R^{a,b}_{c,d;0,-1}$. Let proceed according to homogeneity of $R^{a,b}_{c,d;e,f}$.

\begin{enumerate}
\item[$1,0$] We can normalize $R^{-2,-1}_{-2,-1;-1,0}=R^{-1,-1}_{-1,-1;-1,0}=R^{0,-1}_{-1,-1;0,-1}=0$ and obtain:
\begin{align*}
\om^{-1,-1}_{0,-1}&=\bar \om^{-1,0}_{0,1}=\frac{\bar P K_{\bar l}-K_{\bar l;\bar l}}{3K_{\bar l}}\\
\om'{}^{-1,0}_{0,0}&=\bar \om'^{-1,-1}_{0,0}=-\frac{\bar P}{2}\\
\om^{-1,0}_{0,0}&=-\bar \om^{-1,-1}_{0,0}=\frac{\bar P K_{\bar l}+2K_{\bar l;\bar l}}{6K_{\bar l}}\\
\end{align*}
\item[$1,1$] We can normalize $R^{-1,0}_{-2,-1;0,-1}=0$ and obtain:
\begin{align*}
\om^{0,-1}_{1,0}&=-\bar \om^{0,1}_{1,1}=-\frac{K_{j}}{K_{\bar l}}\\
\end{align*}
\item[$1,2$] We can normalize $R^{-1,-1}_{-2,-1;0,-1}=0$ and obtain:
\begin{align*}
\om^{0,-1}_{1,1}&=\bar \om^{0,1}_{1,0}=0\\
\end{align*}
\item[$W$] There are several remaining components of $R$ in the homogeneities $1,1$ and $1,0$ of the form:
\begin{align*}
-3R^{0,-1}_{-1,-1;0,-1}&=3R^{0,0}_{-1,0;0,-1}=W=\frac{4K_{j}}{K_{\bar l}}-\frac{4K_{l;\bar l}}{K_{\bar l}}-\frac{2\bar K_{l;l}}{\bar K_{l}}+\frac{K_{\bar l;\bar l; k}}{K_{\bar l}^2}-\frac{K_{\bar l;\bar l}K_{\bar l;k}}{K_{\bar l}^3}\\
-3R^{0,0}_{-1,-1;0,1}&=-3R^{0,1}_{-1,0;0,1}=\bar W,\\
\end{align*}
where $W$ is the first fundamental invariant. Note that the ``$0,0$'' position is purely imaginary in the real form and thus the sign change.
\item[$2,0$] We can normalize $R^{-1,-1}_{-2,-1;-1,0}=R^{0,-1}_{-1,-1;-1,0}=0$ and obtain:
\begin{align*}
\om^{-2,-1}_{0,-1}&=-\bar \om^{-2,-1}_{0,1}=\frac{\bar P^2 K_{\bar l}^2+\bar PK_{\bar l}K_{\bar l;\bar l}-3K_{\bar l}^2\bar P_{\bar l}+3K_{\bar l}K_{\bar l;\bar l;\bar l}-5K_{\bar l;\bar l}^2}{36K_{\bar l}^2}\\
\om^{-1,0}_{1,0}&=\bar \om^{-1,-1}_{1,1}=-\frac{\bar P^2 K_{\bar l}^2+\bar PK_{\bar l}K_{\bar l;\bar l}-3K_{\bar l}^2\bar P_{\bar l}+3K_{\bar l}K_{\bar l;\bar l;\bar l}-5K_{\bar l;\bar l}^2}{36K_{\bar l}^2}\\
\end{align*}
\item[$2,1$] We can normalize $R^{-1,0}_{-2,-1;-1,0}=R^{0,0,'}_{-1,-1;-1,0}=2R^{1,0}_{-1,0;0,-1}+2R^{1,1}_{-1,-1;0,1}+R^{0,0}_{-1,-1;-1,0}=0$ and obtain:
\begin{align*}
\om^{-2,-1}_{0,0}&=\om^{-1,-1}_{1,0}=\om^{-1,0}_{1,1}=
 -\frac{P\bar P}{36}-\frac{P\bar W+\bar PW}{72}-\frac{P\bar K_{j}}{6\bar K_{l}}+\frac{(2P+W)K_{\bar l;\bar l}}{36K_{\bar l}}+\frac{(2\bar P+\bar W)\bar K_{l;l}}{36\bar K_{l}}\\
& +\frac{W_{\bar l}+\bar W_{l}}{24}+\frac{P_{\bar l}}{12}+\frac{\bar K_{l;l}\bar K_{j}}{6\bar K_{l}^2}
+\frac{K_{l;\bar l;\bar l}}{12K_{\bar l}}-\frac{K_{l;\bar l}K_{\bar l;\bar l}}{12K_{\bar l}^2}-\frac{\bar K_{j;l}}{3\bar K_{l}}+\frac{\bar K_{l;l;\bar l}}{12\bar K_{l}}-\frac{\bar K_{l;l}K_{\bar l;\bar l}}{36\bar K_{l}K_{\bar l}}-\frac{\bar K_{l;l}\bar K_{l;\bar l}}{12\bar K_{l}^2}
\end{align*}
\item[$2,2$] We can normalize $R^{0,0,'}_{-2,-1;0,-1}=0$ and obtain:
\begin{align*}
 \om^{0,-1}_{2,1}=\bar  \om^{0,1}_{2,1}=-\frac{(PK_{j}+K_{j;l})}{2K_{\bar l}}
\end{align*}
\item[$2,.$] We can not normalize any other component in homogenity $2,.$ and the remaining components of $R$ does not provide any new fundamental invariants. In particular, these components vanish when $W=0$, so we do not write them down explicitly.
\item[$J$] In homogeneity $3,0$, we find the second  fundamental invariant:
\begin{align*}
12R^{0,-1}_{-2,-1;-1,0}&=\bar J=\frac{4P^3}{9}+\frac{2P^2\bar K_{l;l}}{3\bar K_{l}}-2PP_l+\frac{P\bar K_{l;l;l}}{\bar K_{l}}-\frac{5P\bar K_{l;l}^2}{3\bar K_{l}^2}+P_{l;l}-\frac{P_l\bar K_{l;l}}{\bar K_{l}}\\
&-\frac{\bar K_{l;l;l;l}}{\bar K_{l}}+\frac{5\bar K_{l;l}\bar K_{l;l;l}}{\bar K_{l}^2}-\frac{40\bar K_{l;l}^3}{9\bar K_{l}^3}.
\end{align*}
Since we have all fundamental invariants, we continue with this example without explicit formulas that are too long to be presented here.
\item[$3,1$] There are two possible choices of normalization. 

Either we can normalize $R^{0,0,'}_{-2,-1;-1,0}=R^{0,0}_{-1,-1;-1,0}=0$, which depend on $\phi$, and obtain:
\begin{align*}
 \om^{-2,-1}_{1,1},\om^{-2,-1}_{1,0},\om^{-1,-1}_{2,1},\om^{-1,0}_{2,1},
\end{align*}
and the corresponding correction terms $\Theta(\phi)$. 

Or we can normalize $$R^{0,0,'}_{-2,-1;-1,0}=-R^{0,-1}_{-2,-1;-1,-1}+R^{0,0}_{-2,-1;-1,0}+R^{1,1}_{-2,-1;0,1}=0,$$ which does not depend on $\phi.$ However, $-R^{0,-1}_{-2,-1;-1,-1}+R^{0,0}_{-2,-1;-1,0}+R^{1,1}_{-2,-1;0,1}=0$ is a differential equation of the form:
$$(\om^{-1,-1}_{2,1})_{\bar k}=2K_l\bar \om^{-1,-1}_{2,1}+K_{\bar l}\om^{-1,-1}_{2,1}+F(K,P),$$
for certain nonlinear differential operator $F$ acting on the functions $K$, $P$. 

If $W=0$, then there is normalization condition $\partial^*R=0$, where $\partial^*$ is the usual codifferential in the theory of parabolic geometries, see \cite[Section 3.3]{parabook}.
\item[$4,2$] In homogeneity $4,.$, we no longer (unless $W=0$) have an option to choose linear combination of components of $R$ that would not depend on $\phi$. We can normalize $R^{2,1}_{-1,-1;-1,0}=0$ and obtain the remaining:
\begin{align*}
 \om^{-2,-1}_{2,1}
\end{align*}
and the corresponding correction term $\Theta(\phi)$. 
\end{enumerate}

This does not rule out the existence of absolute parallelism that would not depend on $\phi$ (a Cartan connection). It only shows that the normalization condition has to be given by a differential operator $D$ acting on $R$ such that $D(R)$ does not depend on $\phi$ and such that $D(R)=0$ uniquely determines the absolute parallelism/Cartan connection.

\section{Defining equations for the hypersurface models}\label{sec5}
\addtocontents{toc}{\setcounter{tocdepth}{1}}

In this section, we use the Proposition \ref{embed} to compute the defining equations for all models from our classification in Table \ref{realclasA} that are of codimension $1$.

In general, the computation proceeds as follows:

We represent elements of $\fg\otimes \mathbb{C}$ as block matrices decomposed according to the bigrading $\fg_{a,b}$. We specify the elements $X\in \fg_-$, where we decompose the blocks into the real part and imaginary parts $X^j+iY^j$ of $\fg_{-1}$ and represent $\fg_{-2}$ by $c\in \mathbb{R}$. This realizes the real form $\fg$ in $\fg\otimes \mathbb{C}$ in agreement with the Proposition \ref{embed}. The different real forms are distinguished by the diagonal matrices $I_{a,b}$ with $a$ and $b$ being the numbers $-1$ and $1$ on the diagonal. Further, we specify the elements $\frac12(Y-iI(Y))\in \fg_{0,-1}$ for $Y\in \fk$ and use the notation $\Xi$ for the nontrivial blocks.

Since we are working with nilpotent matrices, we can directly compute the embedding $\phi(X+Y)$ using the Taylor expansions for $\exp$ and $\exp^{-1}$ and matrix multiplication. We use the notation $Z^j$ for the blocks in $\fg_{-1,-1}$ and represent $\fg_{-2,-1}$ by $w\in \mathbb{C}$ .

Finally, we eliminate the blocks $X^j,Y^j$ and $c$ from $\phi(X+Y)$ and we present a single defining equation given by combinations of multiplication of the blocks $\Xi,Z^j$ and their conjugates $\bar \Xi,\bar Z^j$. We remark that we use the notation $\bar \Xi^T$ for the conjugate transpose of $\Xi$.

We name the subsection according to the entries of our classification in Table \ref{realclasA} and specify the signature of the Levi form to distinguish the cases with different $\fg_{0,I}'$. The first case is presented in greater detail.

\subsection{$\fg=\frak{sl}(2n+2,\mathbb{R}),\Sigma_1=\{\alpha_1,\alpha_{2n+1}\}, \Sigma_2=\{\alpha_{n+1}\}, sgn=(n,n,n^2)$}

The first entry in the Table \ref{realclasA} has $\fg=\frak{sl}(2n+2,\mathbb{R})$ and provides contact grading for $\Sigma_1=\{\alpha_1,\alpha_{2n+1}\},\Sigma_2=\{\alpha_{n+1}\}$. This implies $\fg_{-1,-1}=\mathbb{C}^n\oplus (\mathbb{C}^n)^*,\fg_{0,-1}=\mathbb{C}^n\otimes (\mathbb{C}^n)^*, sgn=(n,n,n^2)$ and
$$X= \left[ \begin {array}{cccc} 0&0&0&0\\ X^1-iY^1&0&0&0\\ X^1+iY^1&0&0&0
\\ c&X^2+iY^2&X^2-iY^2&0
\end {array} \right] , Y= \left[ \begin {array}{cccc} 0&0&0&0\\ 0&0&0&0
\\ 0&\Xi&0&0\\ 0&0&0&0\end {array}
 \right], 
$$
where $X^1,Y^1\in \mathbb{R}^n, X^2,Y^2\in (\mathbb{R}^n)^*$, $\Xi\in \mathbb{C}^n\otimes (\mathbb{C}^n)^*$.
Now, we apply the formula from Proposition \ref{embed} on the matrices $X,Y$ and obtain the formula for the embedding
$$\phi(X+Y)= \left[ \begin {array}{cccc} 0&0&0&0\\ 0&0&0&0\\ Z^1&\Xi&0&0\\ w&Z^2&0&0\end {array} \right],  \begin{array}{c}
Z^1=(\id+\Xi) X^1+ i(\id-\Xi) Y^1, \\
Z^2=X^2(\id+\Xi) + iY^2(\id-\Xi) ,\\
w=c+( -X^2\Xi
-iY^2(\id-\Xi) )X^1\\
+ (  iX^2(\id+\Xi) +Y^2\Xi)Y^1
\end {array},
$$
where $Z^1\in \mathbb{C}^n,Z^2\in (\mathbb{C}^n)^*$.
We can solve that 
\begin{align*}
X^1&=(2\id-\Xi \bar \Xi-\bar \Xi\Xi)^{-1}((\id+\Xi)\bar Z^1+(\id+\bar \Xi)Z^1),\\
X^2&=(\bar Z^2(\id-\Xi)+Z^2(\id-\bar \Xi))(2\id-\Xi \bar \Xi-\bar \Xi\Xi)^{-1},\\
Y^1&=i(2\id-\Xi \bar \Xi-\bar \Xi\Xi)^{-1}((\id-\Xi)\bar Z^1-(\id-\bar \Xi)Z^1),\\
Y^2&=i(\bar Z^2(\id+\Xi)-Z^2(\id+\bar \Xi))(2\id-\Xi \bar \Xi-\bar \Xi\Xi)^{-1}
\end{align*} and plug it into formula for $Im(w)$ to obtain the following defining equation
\begin{align*}
Im(w)&=-i(Z^2(2\id-\Xi \bar \Xi-\bar \Xi\Xi)^{-1}\bar Z^1-\bar Z^2(2\id-\Xi \bar \Xi-\bar \Xi\Xi)^{-1}Z^1)\\
&+Im(Z^2(2\id-\Xi \bar \Xi-\bar \Xi\Xi)^{-1}\bar \Xi Z^1).
\end{align*}
In the case $n=1$, the signature is $(1,1,1)$ and the defining equation simplifies to
\begin{align*}
Im(w)&=\frac{-i(\bar z_1 z_2-\bar z_2 z_1)+Im(2z_1z_2\bar \xi)}{2\id-2\xi \bar \xi},
\end{align*}
where $z_1,z_2,\xi\in\mathbb{C}$.

\subsection{$\fg=\frak{su}(p+1,q+1),\Sigma_1=\{\alpha_1,\alpha_{p+q+1}\},\Sigma_2=\{\alpha_{r+s+1}\}, sgn=(r+q-s,s+p-r,(r+s)(p+q-r-s))$}

$$X= \left[ \begin {array}{cccc} 0&0&0&0\\ I_{r,s}(X^2-iY^2)^T&0&0&0\\ 
X^1+iY^1&0&0&0\\ ic
&X^2+iY^2&(X^1-iY^1)^TI_{p-r,q-s}&0
\end {array} \right] 
, Y= \left[ \begin {array}{cccc} 0&0&0&0\\ 0&0&0&0
\\ 0&\Xi&0&0\\ 0&0&0&0\end {array} \right],
$$
where $X^1,Y^1\in \mathbb{R}^{p+q-r-s}, X^2,Y^2\in (\mathbb{R}^{r+s})^*$, $\Xi\in \mathbb{C}^{p+q-r-s}\otimes (\mathbb{C}^{r+s})^*$.
$$\phi(X+Y) \left[ \begin {array}{cccc} 0&0&0&0\\ 0&0&0&0\\ Z^1&\Xi&0&0\\ w&Z^2&0&0\end {array} \right],   \begin{array}{c}
Z^1=X^1+ iY^1-\Xi I_{r,s}(X^2- iY^2)^T, \\
Z^2=X^2+ iY^2+(X^1-iY^1)^TI_{p-r,q-s}\Xi, \\
w=ic-\frac12(X^2+ iY^2) I_{r,s} (X^2- iY^2)^T\\
+\frac12(X^1-iY^1)^T I_{p-r,q-s} (X^1+ iY^1)\\
-(X^1-iY^1)^TI_{p-r,q-s} \Xi I_{r,s} (X^2- iY^2)^T
\end{array},
$$
where $Z^1\in \mathbb{C}^{p+q-r-s},Z^2\in (\mathbb{C}^{r+s})^*$.

\begin{align*}
Re(w)&=\frac12(\bar Z^1)^TD_{p-r,q-s} Z^1-\frac12 Z^2D_{r,s}(\bar Z^2)^T\\
&+Re(\frac12 Z^2D_{r,s}\bar  \Xi^T I_{p-r,q-s}Z^1+\frac12Z^2 I_{r,s} \bar  \Xi^TD_{p-r,q-s} Z^1),
\end{align*}
where $D_{r,s}=(I_{r,s}+ \bar  \Xi^T I_{p-r,q-s} \Xi)^{-1}$ and $D_{p'',q''}=(I_{p-r,q-s}+ \Xi I_{r,s} \bar  \Xi^T)^{-1}$ are Hermitian matrices.

In the case $p=q=r=1,s=0$, the signature is $(2,0,1)$ and the defining equation simplifies to:
\begin{align*}
Re(w)&=-\frac{z_1 \bar z_1+z_2\bar z_2+Re(2 z_1z_2\bar  \xi)}{2-2\xi\bar \xi},
\end{align*}
where $z_1,z_2,\xi\in\mathbb{C}$.

Similarly, in the case $p=q=s=1,r=0$, the signature is $(0,2,1)$ and
\begin{align*}
Re(w)&=\frac{\bar z_1z_1+z_2\bar z_2-Re(2 z_1z_2\bar  \xi)}{2-2\xi\bar \xi}
\end{align*}
and in the case $p=2,q=0,r=1,s=0$, the signature is $(1,1,1)$ and
\begin{align*}
Re(w)&=\frac{\bar z_1z_1-z_2\bar z_2+Re(2 z_1z_2\bar  \xi)}{2+2\xi\bar \xi}
\end{align*}

\subsection{$\fg=\frak{so}(p+2,q+2),\Sigma_1=\{2\}, \Sigma_2=\{1\}, sgn=(q,p,1)$}

$$X=  \left[ \begin {array}{ccccc} 0&0&0&0&0\\ 0&0&0
&0&0\\ X^1+iY^1&X^1-iY^1&0&0&0
\\ ic&0&(X^1-iY^1)^TI_{p,q}&0&0\\ 
0&-ic&(X^1+iY^1)^TI_{p,q}&0&0\end {array} \right] 
, Y= \left[ \begin {array}{ccccc} 0&0&0&0&0\\ \xi&0&0&0&0\\ 0&0&0&0&0\\
0&0&0&0&0
\\ 0&0&0&-\xi&0\end {array}
 \right] ,
$$
where $X^1,Y^1\in \mathbb{R}^{p+q}$ and $\xi\in \mathbb{C}$.
$$\phi(X+Y)= \left[ \begin {array}{ccccc} 0&0&0&0&0\\ \xi&0&0
&0&0\\ Z^1&0&0&0&0
\\  w&0&0&0&0\\ 0&-w&(Z^1)^TI_{p,q}&-\xi&0\end {array} \right],
\begin{array}{c}
Z^1=X^1+iY^1+\xi(X^1-iY^1), \\
w=\frac12((X^1)^TI_{p,q}X^1+(Y^1)^TI_{p,q}Y^1)\\
+i c+\frac12 \xi ((X^1)^TI_{p,q}X^1-(Y^1)^TI_{p,q}Y^1)\\
\end{array},
$$
where $Z^1\in \mathbb{C}^{p+q}$.

\begin{align*}
Re(w)&=\frac{1}{2-2\xi\bar \xi}((\bar Z^1)^TI_{p,q}Z^1-Re(\bar \xi(\bar Z^1)^TI_{p,q}Z^1)).
\end{align*}

In the case, $p=0,q=1$,  the signature is $(1,0,1)$ and the defining equation simplifies (after reparametrization) to the well--know local rational model of a tube over a light cone:

\begin{align*}
Re(w)&=\frac{z \bar z+Re(z^2\bar \xi)}{1-\xi\bar \xi}.
\end{align*}

\subsection{$\fg=\frak{so}^*(2n+2),\Sigma_1=\{2\}, \Sigma_2=\{1\}, sgn=(n,n,1)$}

$$X= \left[ \begin {array}{cccccc} 0&0&0&0&0&0\\ 0&0&0&0
&0&0\\ X^1+iY^1&-X^2+iY^2&0&0&0&0
\\ X^2+iY^2&X^1-iY^1&0&0&0&0\\ c&0
&(X^2-iY^2)^T&(-X^1+iY^1)^T&0&0\\ 0&-c&-(X^1+iY^1)^T&-(X^2+iY^2)^T&0&0\end {array} \right] 
, $$
where $X^1,Y^1,X^2,Y^2\in \mathbb{R}^{n}$, and
$$Y= \left[ \begin {array}{cccccc} 0&0&0&0&0&0\\ \xi&0&0&0&0&0\\ 0&0&0&0&0&0
\\ 0&0&0&0&0&0\\ 0&0&0&0&0&0
\\ 0&0&0&0&-\xi&0\end {array}
 \right],
$$
where $\xi\in \mathbb{C}$.
$$\phi(X+Y)= \left[ \begin {array}{cccccc} 0&0&0&0&0&0\\ \xi&0&0&0
&0&0\\ Z^1&0&0&0&0&0
\\ Z^2&0&0&0&0&0
\\ w&0&0&0&0&0\\ 0&-w&-(Z^1)^T&-(Z^2)^T&-\xi&0\end {array} \right] 
,$$
where $Z^1,Z^2\in \mathbb{C}^n$, and
$$  \begin{array}{c}
Z^1=X^1+iY^1-\xi(X^2-iY^2), \\
Z^2=X^2+iY^2+\xi(X^1-iY^1), \\
w=c-i((Y^1)^T X^2 +(X^1)^T Y^2+ \xi ((X^1)^TY^1+(X^2)^TY^2)) \\
+\frac12 \Xi (-(Y^1)^T Y^1  - (X^2)^T X^2  -(Y^2)^T Y^2  -(X^1)^T X^1 )
\end{array}
$$

\begin{align*}
Im(w)&=\frac{1}{2+2\xi\bar \xi}(i(\bar Z^1)^TZ^2-i(\bar Z^2)^TZ^1-\frac12Im(\bar \xi((Z^1)^TZ^1+(Z^2)^TZ^2))).
\end{align*}
In the case, $n=1$,  the signature is $(1,1,1)$ and the defining equation simplifies to:
\begin{align*}
Im(w)&=\frac{i\bar z_1z_2-i\bar z_2z_1-\frac12Im(\bar \xi(z_1^2+z_2^2))}{2+2\xi\bar \xi}.
\end{align*}

\subsection{$\fg=\frak{so}(2p+2,2q+2),\Sigma_1=\{2\}, \Sigma_2=\{p+q+2\}, sgn=(2p,2q,\frac{p+q}{2}(p+q-1))$}

$$X=  \left[ \begin {array}{cccccc} 0&0&0&0&0&0\\ 0&0&0&0
&0&0\\ I_{p,q}(X^1-iY^1)&I_{p,q}(X^2-iY^2)&0&0&0&0
\\ X^1+iY^1&X^2+iY^2&0&0&0&0
\\ c&0&-(X^2+iY^2)^T&(-X^2+iY^2)^TI_{p,q}&0&0\\ 0&-c&-(X^1+iY^1)^T&(-X^1+iY^1)^TI_{p,q}&0&0\end {array} \right],$$ 
where $X^1,Y^1,X^2,Y^2\in \mathbb{R}^{p+q}$, and
$$Y= \left[ \begin {array}{cccccc} 0&0&0&0&0&0\\ 0&0&0&0&0&0\\ 0&0&0&0&0&0
\\ 0&0&\Xi&0&0&0\\ 0&0&0&0&0&0
\\ 0&0&0&0&0&0\end {array}
 \right] , \phi(X+Y)= \left[ \begin {array}{cccccc} 0&0&0&0&0&0\\ 0&0&0&0
&0&0\\ 0&0&0&0&0&0
\\ Z^1&Z^2&\Xi&0&0&0
\\ w&0&-(Z^2)^T&0&0&0\\ 0&-w&-(Z^1)^T&0&0\end {array} \right] 
,$$
where $\Xi\in \wedge^2 \mathbb{C}^{p+q}$ and $Z^1,Z^2\in \mathbb{C}^{p+q},$ and
$$  \begin{array}{c}
Z^1=X^1+iY^1-\Xi I_{p,q}(X^1-iY^1), \\
Z^2=X^2+iY^2-\Xi I_{p,q}(X^2-iY^2), \\
w=c+\frac12 (X^2+iY^2)^TI_{p,q} (X^1-iY^1)-\frac12 (X^2-iY^2)^T I_{p,q} (X^1+iY^1)\\
+(X^2-iY^2)^T I_{p,q}\Xi I_{p,q} (X^1-iY^1)
\end {array}
$$

\begin{align*}
Im(w)&=\frac12((\bar Z^2)^T D Z^1+(\bar Z^1)^T D Z^2+Im((Z^2)^T (D \bar \Xi I_{p,q} +I_{p,q} \Xi D) Z^1)),
\end{align*}
where $D=(I_{p,q}-\Xi I_{p,q}\bar \Xi)^{-1}$ is a Hermitian matrix.

\subsection{$\fg=\frak{so}^*(2n+2),\Sigma_1=\{2\}, \Sigma_2=\{n+1\},sgn=(2p,2(n-p),\frac{n}{2}(n-1))$}

$$X= \left[ \begin {array}{cccccc} 0&0&0&0&0&0\\ 0&0&0&0
&0&0\\ I_{p,n-p}(X^2-iY^2)&I_{p,n-p}(-X^1+iY^1)&0&0&0&0
\\ X^1+iY^1&X^2+iY^2&0&0&0&0\\ ic&0
&-(X^2+iY^2)^T&(X^1-iY^1)^TI_{p,n-p}&0&0\\ 0&-ic&-(X^1+iY^1)^T&(-X^2+iY^2)^TI_{p,n-p}&0&0\end {array} \right] 
, 
$$
where $X^1,Y^1,X^2,Y^2\in \mathbb{R}^{n}$, and
$$Y= \left[ \begin {array}{cccccc} 0&0&0&0&0&0\\ 0&0&0&0&0&0\\ 0&0&0&0&0&0
\\ 0&0&\Xi&0&0&0\\ 0&0&0&0&0&0
\\ 0&0&0&0&0&0\end {array}
 \right] ,\phi(X+Y)= \left[ \begin {array}{cccccc} 0&0&0&0&0&0\\ 0&0&0&0
&0&0\\ 0&0&0&0&0&0
\\ Z^1&Z^2&\Xi&0&0&0
\\ w&0&-(Z^2)^T&0&0&0\\ 0&-w&-(Z^1)^T&0&0&0\end {array} \right] 
, $$
where $\Xi\in \wedge^2 \mathbb{C}^{n}$ and $Z^1,Z^2\in \mathbb{C}^{n},$ and
$$ \begin{array}{c}
Z^1=X^1+iY^1-\Xi I_{p,n-p}(X^2-iY^2), \\
Z^2=X^2+iY^2+\Xi I_{p,n-p}(X^1-iY^1), \\
w=ic+\frac12((X^2+iY^2)^T I_{p,n-p}(X^2-iY^2)+(X^1-iY^1)^T I_{p,n-p}(X^1+iY^1))\\
-(X^1-iY^1)^T I_{p,n-p}\Xi I_{p,n-p}(X^2-iY^2)
\end {array}
$$

\begin{align*}
Re(w)&=\frac12((\bar Z^2)^T D Z^2+ (\bar Z^1)^T D Z^1-Re((Z^2)^T (D \bar \Xi I_{p,n-p} + I_{p,n-p}\bar \Xi D) Z^1)),
\end{align*}
where $D=(I_{p,n-p}+\bar \Xi I_{p,n-p}\Xi)^{-1}$ is a Hermitian matrix.

\subsection{A generic model $\fg=\frak{sp}(2n+2,\mathbb{R}),\Sigma_1=\{\alpha_1\},\Sigma_2=\{\alpha_{n+1}\}, sgn=(p,n-p,\frac{n(n+1)}{2})$}

Let us emphasize that $\fg_0$ is in this case the full algebra of infinitesimal contactomorphisms preserving the grading of $\fg_-$ and this is unique model for 2--nondegerate CR hypersurfaces with such signature of the Levi form.

$$X= \left[ \begin {array}{cccc} 0&0&0&0\\ I_{p,n-p}(X^1-
iY^1)&0&0&0\\ 
X^1+iY^1&0&0&0\\ ic
&(X^1+iY^1)^T&(X^1-iY^1)^TI_{p,n-p}&0
\end {array} \right] 
, Y= \left[ \begin {array}{cccc} 0&0&0&0\\ 0&0&0&0
\\ 0&\Xi&0&0\\ 0&0&0&0\end {array} \right],
$$
where $X^1,Y^1\in \mathbb{R}^n$ and $\Xi\in S^2\mathbb{C}^n$
$$\phi(X+Y)= \left[ \begin {array}{cccc} 0&0&0&0\\ 0&0&0&0\\ Z^1&\Xi&0&0\\ w&(Z^1)^T&0&0\end {array} \right],  \begin{array}{c}
Z^1=X^1+ iY^1-\Xi I_{p,n-p}(X^1- iY^1), \\
w=ic- (X^1- iY^1)^T I_{p,n-p} (X^1+ iY^1)\\
+ (X^1- iY^1)^T I_{p,n-p} \Xi I_{p,n-p}(X^1- iY^1)
\end{array},
$$
where $Z^1\in \mathbb{C}^n$.
\begin{align*}
Re(w)&=-(\bar Z^1)^TD_{p,n-p} Z^1-Re((Z^1)^T I_{p,n-p} \bar  \Xi D_{p,n-p} Z^1),
\end{align*}
where $D_{p,n-p}=(I_{p,n-p}- \Xi I_{p,n-p} \bar \Xi)^{-1}$ is a Hermitian matrix.

In the case $n=1,p=0$, the signature is $(0,1,1)$ and the defining equation simplifies to:
\begin{align*}
Re(w)&=\frac{z \bar z+Re(z^2\bar  \xi)}{1-\xi\bar \xi},
\end{align*}
which is again the well--know local rational model of a tube over a light cone.

\subsection{Remark on exceptional models}

In the Table \ref{realclasB}, there are several entries for the real forms of the exceptional Lie algebras $\frak{e}_6$ and $\frak{e}_7$ that that are of codimesion $1$. We do not compute the defining equations for these models due to computational complexity. We think that the computation of these remaining defining equations can be a good problem for a Bachelor/Master thesis.

\end{document}